\newcommand{\deltat}{{k}}
\newtheorem{df}{Definition}[section]
\newtheorem{lm}{Lemma}[section]
\newtheorem{cor}{Corollary}[section]
\newcommand{\mesh}{\mathcal{T}}
\newcommand{\R}{\mathbb{R}}
\newcommand{\sth}{\sum_{K \in {\cal{T}} }}
\newcommand{\sti}{\sum_{\sigma \in {\cal{E}}_{\intt}}}
\newcommand{\stik}{\sum_{\substack{\sigma \in {\cal{E}}(K) \\ \sigma=K|L}}}
\newcommand{\stie}{\sum_{i, \sigma \in {\cal{E}} }}
\newcommand{\stiek}{\sum_{\sigma \in {\cal{E}}(K) }}
\newcommand{\stkl}{\sum_{\substack{\sigma\in{\cal E}_{\intt} \\ \sigma=K|L}} }
\newcommand{\bu}{\boldsymbol{u}}
\newcommand{\bn}{\boldsymbol{n}}
\newcommand{\bv}{\boldsymbol{v}}
\newcommand{\bzero}{\boldsymbol{0}}
\newcommand{\bU}{\boldsymbol{U}}
\newcommand{\bW}{\boldsymbol{W}}
\newcommand{\dS}{\, {\mathrm d}S}
\newcommand{\dx}{\, {\mathrm d}x}
\newtheorem{Theorem}{Theorem}[section]
\newtheorem{Proposition}{Proposition}[section]
\newtheorem{Lemma}{Lemma}[section]
\newtheorem{Corollary}{Corollary}[section]
\newtheorem{Remark}{Remark}[section]
\newcommand{\bTheorem}[1]{
\begin{Theorem} \label{T#1} }
\newcommand{\eT}{\end{Theorem}}
\newcommand{\bProposition}[1]{
\begin{Proposition} \label{P#1}}
\newcommand{\eP}{\end{Proposition}}
\newcommand{\bLemma}[1]{
\begin{Lemma} \label{L#1} }
\newcommand{\eL}{\end{Lemma}}
\newcommand{\bCorollary}[1]{
\begin{Corollary} \label{C#1} }
\newcommand{\eC}{\end{Corollary}}
\newcommand{\bFormula}[1]{
\begin{equation} \label{#1}}
\newcommand{\eF}{\end{equation}}
\newcommand{\bRemark}[1]{
\begin{Remark} \label{R#1} }
\newcommand{\eR}{\end{Remark}}
\newcommand{\Ov}[1]{\overline{#1}}
\newcommand{\vr}{\varrho}
\newcommand{\vc}[1]{{\bf #1}}
\newcommand{\Grad}{\nabla_x}
\newcommand{\Rm}{\mbox{\FF R}}
\newcommand{\intO}[1]{\int_{\Omega} #1 \ \dx}
\font\FF=msbm10 scaled 800
\definecolor{grey}{rgb}{0.85,0.85,0.85}
\date{}
\long\def\greybox#1{%
    \newbox\contentbox%
    \newbox\bkgdbox%
    \setbox\contentbox\hbox to \hsize{%
        \vtop{
            \kern\columnsep
            \hbox to \hsize{%
                \kern\columnsep%
                \advance\hsize by -2\columnsep%
                \setlength{\textwidth}{\hsize}%
                \vbox{
                    \parskip=\baselineskip
                    \parindent=0bp
                    #1
                }%
                \kern\columnsep%
            }%
            \kern\columnsep%
        }%
    }%
    \setbox\bkgdbox\vbox{
        \color{grey}
        \hrule width  \wd\contentbox %
               height \ht\contentbox %
               depth  \dp\contentbox
        \color{black}
    }%
    \wd\bkgdbox=0bp%
    \vbox{\hbox to \hsize{\box\bkgdbox\box\contentbox}}%
    \vskip\baselineskip%
}
\DeclareMathOperator{\dv}{div}
\DeclareMathOperator{\dt}{dt}
\DeclareMathOperator{\intt}{int}
\DeclareMathOperator{\extt}{ext}
\DeclareMathOperator{\T}{{\cal{T}}}
\DeclareMathOperator{\E}{{\cal{E}}}
\numberwithin{equation}{section}
\title{Error estimates for a numerical approximation to the compressible barotropic Navier-Stokes equations}
\author{Thierry Gallou\"et \and Rapha\`ele Herbin \and David Maltese \and Antonin Novotny\thanks{This work was supported by the MODTERCOM project within the APEX programme of the Provence-Alpes-C\^ote d'Azur region}}
\begin{document}

\maketitle

\centerline{Aix-Marseille Universit\'e, CNRS,   Centrale  Marseille,  I2M, UMR 7373, 13453 Marseille, France}
\bigskip
\centerline{and}
\bigskip
\centerline{IMATH, EA 2134, Universit\' e de Toulon, BP 20132, 83957 La Garde, France}

\begin{abstract}
We present here a general method based on the investigation of the relative energy of the system, that provides an unconditional error estimate for the approximate solution of the barotropic Navier Stokes equations obtained by time and space discretization.
We use this methodology to derive an error estimate for a specific  DG/finite element scheme for which the convergence was proved in \cite{KARPER}. { This is an extended version of the paper submitted to IMAJNA.}
\end{abstract}
{\bf Keywords:} Compressible fluids, Navier-Stokes equations, Relative energy, Error estimates, Finite element mlethods, Finite volume methods
\\
{\bf AMS classification} 35Q30,  65N12, 65N30, 76N10, 76N15, 76M10, 76M12
\titleformat\section{}{}{0pt}{\Large\scshape\bfseries\filcenter}
\tableofcontents
\titleformat\section{}{}{0pt}{\Large\scshape\bfseries\filcenter\thesection{} - }

\section{Introduction}\label{intro}

The aim of this paper is to derive an error estimate for approximate solutions of the compressible barotropic Navier-Stokes equations obtained by a discretisation scheme.
These equations are posed on the time-space domain $Q_T=(0,T)\times\Omega$, where $\Omega$ is a bounded polyhedral domain of $\R^d$, $d=2,3$ and  $T>0$, and read:
\begin{subequations}
	 \begin{align}
		& \partial_t \vr + \dv ( \vr \bu) = 0,  \label{cont2} \\
		& \partial_t (\vr\bu) +{\rm div}(\vr\bu\otimes\bu) - \mu \Delta \bu -(\mu+\lambda) \nabla \dv \bu+ \nabla_x p(\vr) = \bm{0}, \label{mov2}
	\end{align}
	\label{pbcont}
\end{subequations}
supplemented with the initial conditions
\begin{equation}\label{ci1}
	 \vr(0,x) = \vr_0 (x),~\vr\bu(0,x) =\vr_0\bu_0,
\end{equation}
where $\varrho_0$ and $\bu_0$ are given functions from $\Omega$ to $\R_+$ and $\R^d$ respectively, and boundary conditions
\begin{equation}\label{ci2}
	\bu_{|(0,T)\times\partial \Omega} =0.
\end{equation}
In the above equations, the unknown functions are the scalar density field $\vr(t,x)\ge 0$ and vector velocity field $\bu=(u_1,\ldots,u_d)(t,x)$, where $t \in (0,T)$ denotes the time and $x\in \Omega$ is the space variable.
The viscosity coefficients $\mu$ and $\lambda$ are such that
\begin{equation}\label{visc}
	\mu > 0,~ \lambda + \frac{2}{d}\mu \ge 0.
\end{equation}
The pressure $p$ is a given by an equation of state, that is a function of density which satisfies
\begin{equation}\label{hypp}
	p \in C([0,\infty)) \cap C^1(0,\infty),\;p(0)=0,\; p'(\vr)>0.
\end{equation}
In addition to (\ref{hypp}), in the error analysis, we shall need to prescribe the asymptotic behavior of the pressure at large densities
\begin{equation}\label{pressure1}
	\lim_{\vr\to \infty}\frac {p'(\vr)}{\vr^{\gamma-1}}=p_\infty>0\quad\mbox{with some } \gamma\ge 1;
\end{equation}
furthermore, if $\gamma <2$ in \eqref{pressure1}, we need the additional condition (for small densities):
\begin{equation}\label{pressure2}
\liminf_{\vr\to 0}\frac {p'(\vr)}{\vr^{\alpha+1}}=p_0>0\quad\mbox{with some }\alpha\le 0.
\end{equation}

The main underlying idea of this paper is to derive the error estimates for approximate solutions of problem \eqref{pbcont}{--(\ref{ci2})} obtained by time and space discretization by using the discrete version of the {\em relative energy method} { introduced on the continuous level in \cite{FeJiNo,FENOSU, FENO7}. In spite of the fact that the relative energy method looks at the first glance pretty much  similar to the widely used {\em relative entropy method} (and both approaches translate the same thermodynamic stability conditions), they are very different in appearance and formulation and may provide different results.}
The notions of relative entropy and relative entropy inequality were first introduced by Dafermos \cite{Daf4} in the context of systems of conservation laws and in particular for the compressible Euler equations.
The relative energy functional was suggested and successfully used for the investigation of the stability of weak solutions to the equations of viscous compressible and heat conducting fluids in \cite{FENO7}.
In contrast with the relative entropy of Dafermos, { for the viscous and heat conducting fluids}, the relative energy approach is able to provide the structural stability of weak solutions, while the relative entropy approach fails in this case. 
\\ \\
Both functionals coincide  { (modulo a change of variables)} in the case of (viscous) compressible flows in the barotropic regime. The  relative energy functional and the intrinsic version of the relative energy inequality have been recently employed to obtain several stability results for the weak solutions to these equations, including the weak strong uniqueness principle, see \cite{FeJiNo,FENOSU}.
Note that particular versions of the relative entropy inequality with particular specific test functions  had been previously derived   in the context of low Mach number limits, see e.g. \cite{MASS,WanJia}.
\\ \\
The { discrete} version of the Dafermos relative entropy was employed in the non viscous case to derive an error estimate for the numerical approximation to a hyperbolic system of conservation laws and, in particular, to the compressible Euler equations \cite{CancesMathisSeguin2014Relative}.
In this latter paper, the authors assume an $L^\infty$   bound for the discrete solution, which is uniform with respect to the size of the space and time disretization (usually called stability hypothesis), that is not provided by the discrete equations. { The same method with the same severe hypotheses have been used in
\cite{YOVAN} to treat the compressible Navier-Stokes equations}.
The error analysis in the present paper relies on the theoretical background introduced in \cite{FeJiNo} and yields an {\it unconditional result}; in particular, {\it we do not need any assumed bound} on the solution to get the error estimate.
\\ \\
The mathematical analysis of numerical schemes for the discretization of the  steady and/or non steady compressible Navier-Stokes and/or compressible Stokes equations has been the object of some recent works.
The convergence of the discrete solutions to the weak solutions of the compressible stationary Stokes was shown for a finite volume-- non conforming P1 finite element \cite{GHL2009iso,EGHL2010isen,eym-10-convII} and for the wellknown MAC scheme which was introduced in \cite{har-65-num} and is widely used in computational fluid dynamics {\ (see e.g.  \cite{Sun-2014})}.
The unsteady Stokes problem was also discretized by some other discretization schemes on a reformulation of the problem, which were  proven to be convergent \cite{KK2010Stokes,KK2011Stokes,KK2012Stokes}.
{ The unsteady barotropic Navier-Stokes equations was recently { investigated} in \cite{KARPER}  in the case $\gamma>3$ (there is a real difficulty in the realistic case $\gamma \le 3$ arising from the treatment of the non linear convective term).}
However, in these works, the rate of convergence is not provided; in fact, to the best of our knowledge, no error analysis has yet been performed for any of the numerical schemes that have been designed for the compressible Navier-Stokes equations, in spite of its great importance for the numerical analysis of the equations and for the mathematical simulations of compressible fluid flows.
We  present here a general technique to obtain an error analysis and apply it to one of the available numerical schemes.
To the best of our knowledge, this is the first result of this type in the mathematical literature on the subject.
\\ \\
To achieve the goal, we systematically use the relative energy method on the discrete level.
From this point of view, this paper is as valuable for the introduced methodology as for the result itself.
Here, we apply the method to the scheme of  \cite{KARPER}.
{ In spite of the fact that this latter scheme is not used in practice (see e.g. \cite{KHL2013baro} for a related schemes used in industrial codes), we begin the error analysis with the scheme \cite{KARPER}} because of its readily available convergence proof.
In fact, we aim to use this approach to investigate the numerical errors of  less academic numerical schemes, such as the finite volume -- non conforming P1 finite element \cite{GHL2010drift,KHL2013baro,GGHL2008baro,GHKLL2011allspeed} or the MAC scheme \cite{bab-11-dis,her-14-inc}.
\\ \\
The paper is organized as follows.
After recalling the fundamental setting of the problem and the relative energy inequality in the continuous case in Section \ref{1}, we proceed in Section \ref{2} to the discretization: we introduce the discrete functional spaces and the definition of the numerical scheme, { and state  the main result of the paper, that is the error estimate formulated} in Theorem \ref{Main}.
The remaining sections are devoted to the proof of Theorem \ref{Main}:
\begin{list}{$\bullet$}{}
\item
 In Section \ref{4} we recall the existence theorem  for the numerical scheme (Lemma \ref{Theorem1}) and derive estimates provided by the scheme.  
\item In Section \ref{5}, we derive the discrete intrinsic version of the relative energy inequality for the solutions of the numerical scheme (see Theorem \ref{Theorem4}).
\item The relative energy inequality is transformed to a more convenient form in Section \ref{6}, see Lemma \ref{refrelenergy}.
\item Finally, in Section \ref{7}, we investigate the form of the discrete relative energy inequality with the test function { being a strong} solution to the original problem. This investigation is formulated in Lemma \ref{strongentropy} and finally leads to a Gronwall type estimate formulated in Lemma \ref{Gronwall}. The latter yields the error estimates and finishes the proof of the main result.
 \end{list}
 Fundamental properties of the discrete functional spaces
needed throughout the paper are  reported in Appendix (Section \ref{3}). Some of them (especially those referring to the $L^p$ setting, $p\neq 2$ that are not currently available in the mathematical literature) are proved. Section \ref{3} is therefore of the independent interest.

\section{The continuous problem}\label{1}
The aim of this section is to recall some fundamental notions and results.
We begin by the definition of weak solutions to problem \eqref{pbcont}-- \eqref{ci2}.

\begin{df}[Weak solutions]\label{ws}
{ Let $\varrho_0  : \Omega \to [0, +\infty) $ and $\bu_0 :\Omega \to \R^d$  with  finite energy $E_0=\int_\Omega (\frac{1}{2} \varrho_0 |\bu_0|^2 + {{H}}(\vr_0)) \dx$ and finite mass $0<M_0=\int_\Omega\vr_0\dx$.}
We shall say that the pair $ (\vr,\bu) $ is a weak solution to the problem   \eqref{pbcont}--\eqref{ci2}  emanating from the initial data $ (\vr_0,\bu_0)$ if:
\begin{description}
\item{(a)}  $ \vr \in L^{\infty}(0,T;L^1(\Omega)),\; \vr \ge 0 ~a.e. ~\text{in}~ (0,T)\time\Omega,$  and $ \bu \in L^2(0,T; W_0^{1,2}(\Omega)).$

\item{(b)}  $\vr\in C_{\rm weak}([0,T]; L^1(\Omega))$, and the continuity equation  $(\ref{cont2})$ is satisfied in the following weak sense
\begin{equation}\label{contf}
	\int_{\Omega} \vr \varphi \dx\Big|_0^\tau  = \int_{0}^{\tau}\int_\Omega \Big(\vr  \partial_t \varphi +  \vr \bu \cdot \nabla_x \varphi\Big) \dx\dt, \; \forall \tau \in [0,T], \,  \forall \varphi \in C^{\infty}_c ( [0,T]\times \overline{\Omega}).
\end{equation}
\item{(c)}   $\vr\bu\in C_{\rm weak}([0,T]; L^1(\Omega))$, and the momentum equation $(\ref{mov2}) $ is satisfied in the weak sense,\begin{multline}\label{movf}
	\int_{\Omega} \vr\bu \cdot \varphi \dx\Big|_0^\tau = \int_0^\tau \int_\Omega \Big(\vr\bu \cdot \partial_t \varphi+ \vr\bu\otimes\bu:\nabla\varphi
	+ p(\vr) \dv\varphi\Big)\dx\dt \\
	- \int_0^\tau \int_\Omega\Big(\mu\nabla \bu : \nabla_x \varphi \dx\dt +(\mu +\lambda){\rm div}\bu{\rm div}\varphi\Big)\dx\dt, \; \forall \tau \in [0,T], \, \forall  \varphi \in C^{\infty}_c ( [0,T] \times \Omega;\R^3).
\end{multline}
\item {(d)} The following energy inequality is satisfied
\begin{equation}\label{ienergief}
	\int_\Omega \Big(\frac{1}{2} \vr|\bu|^2 + {{H}}(\vr)\Big) \dx\Big|_0^\tau + \int_0^\tau \int_\Omega\Big( \mu| \nabla \bu |^2 +(\mu+\lambda)|\dv\bu|^2\Big) \dx\dt\le 0, \mbox{ for a.a. } \tau\in (0,T),
\end{equation}
\begin{equation}\label{H}
	{\ \mbox{ with } \; H(\vr)=\vr\int_1^\vr\frac{p(z)}{z^2}{\rm d}z.}
\end{equation}
\end{description}
Here and hereafter the symbol $\displaystyle \int_{\Omega} g \dx\,|_0^\tau$ is meant for $\displaystyle  \int_\Omega g(\tau,x)\dx - \int_\Omega g_0(x)\dx$.
\end{df}

In the above definition, we tacitly assume that all  the integrals in the formulas \eqref{contf}--\eqref{ienergief} are defined and we recall that $C_{\rm weak}([0,T]; L^1(\Omega))$  is the space of functions of $L^\infty([0,T]; L^1(\Omega))$ which are continuous for the weak topology.

{\ We notice that the function $\vr\mapsto H(\vr)$ is a solution of the ordinary differential equation
$\vr H'(\vr)-H(\vr)=p(\vr)$
with the constant of integration fixed such that $H(1)=0$.}

Note that the existence of weak solutions emanating from the finite energy initial data is well-known on bounded Lipschitz domains under assumptions (\ref{hypp}) and (\ref{pressure1}) provided $\gamma>d/(d-1)$, see Lions \cite{LI4} for "large" values of $\gamma$, Feireisl and coauthors \cite{FNP} for $\gamma>d/(d-1)$.

\medskip

Let us now introduce the notion of relative energy.
We first introduce the function
\begin{equation}\label{E}
	\begin{array}{lll}
		&E: &[0,\infty)\times(0,\infty)\to \R,\\
		& & (\vr,r) \mapsto E(\vr|r)=H(\vr)-H'(r)(\vr-r) -H(r),
	\end{array}
\end{equation}
where $H$ is defined by \eqref{H}.
Due to the monotonicity hypothesis in (\ref{hypp}), $H$  is strictly convex on $[0,\infty)$, and therefore
\[
	E(\vr|r)\ge 0\quad\mbox{and}\quad E(\vr|r)=0\;\Leftrightarrow\;\vr=r.
\]
In order to measure a ``distance'' between a weak solution $(\vr,\bu)$ of the compressible Navier-Stokes system and any other state $(r,\bU)$ of the fluid , we introduce the relative energy functional, defined by
\begin{equation}\label{ent}
	{\cal{E}}(\vr,\bu\Big|r,\bU) = \int_\Omega \Big(\frac{1}{2} \vr| \bu -\bU|^2 + E(\vr\,|\,r)\Big) \dx.
\end{equation}
It was proved recently in \cite{FeJiNo} that, provided assumption \eqref{hypp} holds, any weak solution satisfies the following so-called relative energy inequality
\begin{equation}\label{p5}
\begin{aligned}
	&{\mathcal E} \left( \vr, \bu \Big| r, \bU \right) (\tau) -{\mathcal E} \left( \vr, \bu \Big| r, \bU \right)(0)+ \int_0^\tau \intO{ \Big( \mu|\nabla(\bu-\bU)|^2 +(\mu+\lambda)|\dv(\bu-\bU)|^2 \Big) } \dt \\
	&\phantom{{\mathcal E} \left( \vr, \bu  \right)} \le \int_0^\tau \intO{ \Big( \mu \nabla\bU:\nabla(\bU-\bu) +(\mu+\lambda)\dv \bU \dv(\bU-\bu) \Big) } \dt \\
	 & \phantom{{\mathcal E} \left( \vr, \bu  \right) ) \le } +   \int_0^\tau \intO{  \vr \partial_t \bU\cdot (\bU - \bu )}\dt + \int_0^\tau\intO{\vr\bu {\cdot} \nabla \bU \cdot(\bU - \bu )}\dt \\
	 & \phantom{{\mathcal E} \left( \vr, \bu   \right) \le } -\int_0^\tau \intO{p(\vr)\dv \bU} \dt + \int_0^\tau \intO{ (r - \vr) \partial_t H'(r)}\dt -\int_0^\tau \intO{\vr \nabla H'(r) \cdot\bu }\dt
\end{aligned}
\end{equation}
for a.a. $\tau\in (0,T)$, and for any pair of test functions
\[
r \in C^1 ([0,T] \times \Ov{\Omega}),\ r > 0,\ \bU \in C^1([0,T] \times \Ov{\Omega};\R^3 ),\ \bU|_{\partial \Omega} = 0.
\]


The stability of strong solutions in the class of weak solutions is stated in the following proposition.
\begin{Proposition}[Estimate on the relative energy]\label{proposition}
 Let $\Omega$ be a Lipschitz domain. Assume that the viscosity coefficients satisfy assumptions (\ref{visc}), that the  pressure $p$ is a twice continuously differentiable function on $(0,\infty)$ satisfying (\ref{hypp}) and (\ref{pressure1}), and that $(\vr,\bu)$ is a weak solution to problem \eqref{pbcont}--\eqref{ci2} emanating from initial data $(\vr_0\ge 0,\bu_0)$, with finite energy  $E_0$  and finite mass $M_0{\ =\int_\Omega\vr_0{\rm d} x}>0$.
Let $(r,\bU)$ in the class
\begin{equation} \label{r,U}
	\left\{\begin{array}{l}
		 \displaystyle r\in C^1([0,T]\times\overline\Omega),\; 0<\underline r =\min_{(t,x)\in\overline Q_T}r(t,x)\le r(t,x)\le \overline r= \max_{(t,x)\in\overline Q_T}r(t,x), \\ ~ \\
 		 \bU\in C^1([0,T]\times \overline\Omega;\R^3),\; \bU|_{(0,T)\times\partial\Omega}=0
	\end{array}\right.
 \end{equation}
be a (strong) solution of problem  \eqref{pbcont}  emanating from the  initial data $(r_0,\bU_0)$.
Then there exists
\[
c=c(T,\Omega, M_0, E_0, \underline r, \overline r,  |p'|_{C^1([\underline r,\overline r])}, \|(\nabla r, \partial_t r, \bU, \nabla\bU, \partial_t \bU) \|_{L^\infty(Q_T;\Rm^{19})} )>0
\]
such that for almost all $t\in (0,T)$,
 \begin{equation}\label{cerrorestimate}
  {\cal E}(\vr,\bu\Big|r,\bU)(t)\le c{\cal E}(\vr_0,\bu_0\Big|r_0,\bU_0).
\end{equation}
\eP
This estimate (implying among others the weak-strong uniqueness) was proved in \cite{FeJiNo} (see also \cite{FENOSU}) for pressure laws (\ref{pressure1}) with $\gamma>d/(d-1)$.
It remains valid under weaker hypothesis on the pressure, such as (\ref{pressure1}) with $\gamma\ge 1$; this can be proved using ideas introduced in \cite{BEFENO} and \cite{MANO}.

\section{The numerical scheme}\label{2}
\subsection{Partition of the domain}\label{3.1}
We suppose that $\Omega$ is a bounded domain of $\R^d$, polygonal if $d=2$ and polyhedral if $d=3$.
Let ${\cal{T}}$ be a decomposition of the domain $\Omega$ in { tetrahedra}, which we call hereafter a triangulation of $\Omega$, regardless of the space dimension. By ${\cal{E}}(K)$, we denote the set of the edges ($d=2$) or faces ($d=3$) $\sigma$ of the element $K \in {\cal{T}}$ called hereafter faces, regardless of the dimension.
The set of all faces of the mesh is denoted by ${\cal{E}}$; the set of faces included in the boundary $\partial\Omega$ of $\Omega$ is denoted by ${\cal{E}}_{\extt}$ and the set of internal faces (i.e ${\cal{E}} \setminus {\cal{E}}_{\extt} $) is denoted by ${\cal{E}}_{\intt}$.
The triangulation ${\cal{T}}$ is assumed to be regular in the usual sense of the finite element literature (see e.g. \cite{cia-91-bas}), and in  particular, ${\cal{T}}$ satisfies the following properties:
\begin{itemize}
	\item[$\bullet$] $ \overline{\Omega} = \cup_{K \in {\cal{T}}} \overline{K} $;
	\item[$\bullet$]  if $(K,L) \in {\cal T}^2$, then $ \overline{K} \cap \overline{L} = \emptyset $ or $\overline{K} \cap \overline{L} $ is a vertex or $\overline{K} \cap \overline{L} $ is a common face of $K$ and $L$; in the latter case it is denoted by $K|L$.
\end{itemize}
For each internal face of the mesh $\sigma=K|L$, $\bn_{\sigma, K}$ stands for the normal vector of $\sigma$, oriented from $K$ to $L$ (so that $\bn_{\sigma, K}=-\bn_{\sigma,L}$).
We denote by $ |K|$ and $ |\sigma|$ the ($d$ and $d-1$ dimensional) Lebesgue measure of the { tetrahedron} $K$ and of the face $\sigma$  respectively, and  by $h_K$ and $h_\sigma$ the diameter of $K$ and $\sigma$ respectively.
We measure the regularity of the mesh thanks to  the parameter $\theta$ defined by
{
\begin{equation}\label{reg}
	{\ \theta = \inf \{ \frac{\xi_K}{h_K}, K \in {\cal{T}} \} }
\end{equation}
}
where $\xi_K$ stands for the diameter of the largest ball included in $K$.
Last but not least  we denote by $h$ the maximal size of the mesh,
\begin{equation}\label{maxh}
 {\	h=\max_{K \in {\cal{T}}} h_K.}
 \end{equation}
%
The triangulation $\mathcal T$ is said to be regular if it satisfies
\begin{equation}\label{reg1-}
 	{\ \theta\ge\theta_0>0.}
\end{equation}

\subsection{Discrete {\ function} spaces}
Let $\mathcal T$ be a mesh of $\Omega$.
We denote by $L_h({ \Omega})$ the space of piecewise constant functions on the cells of the mesh;
the space $L_h{ (\Omega)}$ is the approximation space for the pressure and density.
For $1\le p<\infty$, the mapping
\[
	q \mapsto \|q\|_{L_h^p(\Omega)}= \|q\|_{L^p(\Omega)}=\Big(\sum_{K\in{\cal T}}|K| |q_K|^p\Big)^{1/p}
\]
is a norm on $L_h(\Omega)$.
We also introduce spaces of non-negative and positive functions:
\[
  L_h^+{ (\Omega)} = \{q \in L_h{ (\Omega)}, ~q_K \ge 0, ~\forall K \in \T \},\quad L_h^{++}{ (\Omega)} = \{q \in L_h{ (\Omega)}, ~q_K > 0, ~\forall K \in \T \}.
\]
The approximation space for the velocity field  is the space $\bW_h(\Omega)=V_h(\Omega;{\R^d})$, where $V_h(\Omega)$ is the  non conforming piecewise linear finite element space \cite{cro-73-con,ern-04-the} defined by:.
\begin{multline}
	 V_h(\Omega) = \{ v \in L^2(\Omega), ~\forall K \in {\cal{T}}, ~v_{|K} \in \mathbb{P}_1(K),\\
	 \forall \sigma \in {\cal{E}}_{\intt} ,\; \sigma=K|L,\; \int_{\sigma}v_{|K} \dS=\int_{\sigma}v_{|L}\dS,\quad
	\forall \sigma \in {\cal{E}}_{\extt},\; \int_\sigma v \dS=0 \},
 \end{multline}
where $\mathbb{P}_1(K)$ denotes the space of affine functions on $K$ and $\dS$  the integration with respect to the $(d-1)$-dimensional Lebesgue measure { on the face $\sigma$}.
Each element $v\in V_h(\Omega)$ can be written in the form
\begin{equation}\label{CRP}
	v{ (x)}=\sum_{\sigma\in{\cal E}_{\rm int}}v_\sigma\varphi_\sigma{ (x)},\quad  x\in \Omega
\end{equation}
where the set $\{\varphi_\sigma\}_ {\sigma\in{\cal E}_{\rm int}}\subset V_h(\Omega)$ is the classical basis determined by
\begin{equation}\label{CRB}
\forall (\sigma,\sigma')\in {\cal E}^2_{\rm int},\;\int_{\sigma'}\varphi_\sigma \dS=\delta_{\sigma,\sigma'},\quad\forall\sigma'\in {\cal E}_{\rm ext},\; \int_{\sigma'}\varphi_\sigma \dS=0
\end{equation}
{ and $\{v_\sigma\}_{\sigma\in {\cal E}_{\rm int}}\subset R$ is the set of degrees of freedom relative to $v$. }
Notice that $V_h(\Omega)$ approximates the functions with zero traces  in the sense that for all elements in $V_h(\Omega)$, $v_\sigma=0$ provided $\sigma\in {\cal E}_{\rm ext}$.
Since only the continuity of the integral over each face of the mesh is imposed, the functions in $V_h(\Omega)$ may be discontinuous through each face; the discretization is thus nonconforming in $ W^{1,p}(\Omega;\R^d)$, $1\le p\le \infty$.
Finally, we notice that for any $1\le p<\infty$ the expression
\[
	|v|_{V_h^p(\Omega)}=\Big(\sum_{K\in {\cal T}}\|\nabla v\|_{L^p(K;\Rm^d)}^p\Big)^{1/p}
\]
is a norm on $V_h(\Omega)$ and we denote by $V^p_h{ (\Omega)}$ the space $V_h{ (\Omega)}$ endowed with this norm.

We finish this section by introducing some notations.
For a function $v$ in $L^1(\Omega)$, we set
\begin{equation}
 	v_K = \frac 1{|K|}\int_K v \dx \textrm{ for } K \in {\cal T} \textrm{  and }   \hat v{ (x)}= \sum_{K\in {\cal T}} v_K 1_K{ (x)},\; { x\in \Omega}
	\label{vhat}
\end{equation}
so that $\hat v \in L_h(\Omega)$. { Here and in what follows, $1_K$ is the characteristic function of $K$}.

{ If}  $v \in W^{1,p}(\Omega)$, we set
\begin{equation}
	v_\sigma=\frac 1{|\sigma|}\int_\sigma v{\rm d} S\textrm{ for } \sigma\in {\cal E}.
\label{vtilde}
\end{equation}
Finally, if $v \in W^{1,p}_0(\Omega)$, we set
\begin{equation}
   v_h{ (x)}=\sum_{\sigma\in {\cal E_{\rm int}}} v_\sigma \varphi_\sigma{ (x)},\; { x\in \Omega.}
	\label{vh}
\end{equation}
so that $v_h \in V_h(\Omega)$.
{ In accordance with the above notation, for $v\in W^{1,p}_0(\Omega)$, the symbol $\hat v_{h}$ means $\widehat{v_h}(x)=\sum_{\sigma\in {\cal E}_{\rm int}}v_\sigma\hat\phi_\sigma(x)$, and the symbol $v_{h,K}=\frac 1{|K|}\int_K v_h(x){\rm d}x$ and the symbol $\hat v_{h,\sigma}^{\rm up}= [\widehat{(v_h)}]_\sigma^{\rm up}$.}


\subsection{Discrete equations}
Let us consider a partition $0=t_0<t_1<...<t_N =T$ of the time interval $[0,T]$, which, for the sake of simplicity, we suppose uniform. Let $\deltat$ be the constant time step $ \deltat =t_{n}-t_{n-1} $ for $n=1,...,N$. The density
field $\vr(t_n,x)$ and the velocity field ${\bu}(t_n,x)$ will be approximated by the quantities
\begin{equation} \label{notation0}
	\vr^n(x)=\sum_{K\in {\cal T}}\vr_K^n 1_K(x),\quad \bu^n(x)=\sum_{\sigma\in {\cal E}} \bu^n_\sigma \varphi_\sigma(x),
\end{equation}
 where the approximate densities $(\vr^n_K)_{K \in \mesh, n=1,\ldots,N}$ and velocities $(\bu^n_\sigma)_{\sigma \in {\cal{E}}_{\rm int}, n=1,\ldots,N}$ are the discrete unknowns (with { $\vr^n_K \in \R^+$} and $\bu^n_\sigma \in \R^d$).

{ For the future convenience, we denote here and hereafter,
\begin{equation}\label{notation1}
\vr(t,x)=\sum_{n=1}^{N}\vr^n(x)1_{[n-1,n)}(t),\quad\bu(t,x)=\sum_{n=1}^{N}\bu^n(x)1_{[n-1,n)}(t)
\end{equation}
and recall that the usual Lebesgue norms of these functions read
\begin{equation}\label{notation2}
\|\vr\|_{L^\infty(0,T;L^p(\Omega)}\equiv\max_{n=1,\ldots,N}\|\vr^n\|_{L^p(\Omega)}, \quad \|\bu\|_{L^p(0,T;L^q(\Omega;\Rm^3)}\equiv \deltat\Big(\sum_{n=1}^N\|\bu^n\|^p_{L^q(\Omega;\Rm^3)}\Big)^{1/p}
\end{equation}
}

{ Starting from this point, unlike in Section \ref{intro}, here and hereafter, the couple $(\vr,\vc u)$ respectively $(\vr^n,\vc u^n)$ introduced in (\ref{notation0}--\ref{notation2}) denote always exclusively a {\it discrete numerical solution}. }

The numerical scheme consists in writing the equations that are solved to determine these discrete unknowns.
In order to ensure the positivity of the approximate densities, we shall use an upwinding technique for the density in the mass equation.
For  $q\in L_h(\Omega)$ and  $\bu\in \vc W_h (\Omega)$,
the upwinding of  $q$ with respect to  $\bu$ is defined, for $\sigma =K|L\in {\cal E}_{\rm{int}}$ by
\begin{equation}\label{upwind1}
q^{{\rm up}}_\sigma=\begin{cases}
                      q_K \;\mbox{if }\bu_\sigma\cdot\bn_{\sigma, K}> 0\\ q_L\; \mbox{if } \bu_\sigma\cdot\bn_{\sigma, K}{ \le} 0,
                    \end{cases}
\end{equation}
so that
\[
\sum_{\sigma\in{\cal E}(K)} q_\sigma^{\rm up}{\bu}_\sigma\cdot{\vc n}_{\sigma,K}=\stik \Big(q_K [{\bu}_\sigma\cdot{\vc n}_{\sigma,K}]^+ - q_L [{\bu_\sigma}\cdot{\vc n}_{\sigma,K}]^-\Big),
\]
where $a^+ = \max(a,0)$, $a^- =- \min(a,0)$.


Let us then  consider the following numerical scheme \cite{KARPER}:

\vspace{2mm}

{\it Given  $(\vr^0,\bu^0) \in L_h^+(\Omega) \times \bW_h(\Omega) $
{find}} $ (\vr^n)_{1\le n \le N}\subset (L_h(\Omega))^N, (\bu^n)_{1\le n \le N} \subset (\bW_h(\Omega))^N $ {\it such that for all} $ n=1,...,N $
\vspace{2mm}
\begin{subequations}\label{scheme}
\begin{align}
 \label{dcont}
	&|K| \frac{ \vr_{K}^n - \vr_{K}^{n-1}}{\deltat} + \sum_{\sigma \in {\cal E}(K)} |\sigma| \vr_\sigma^{n,{\rm up}}[{\bu_\sigma^n}\cdot\bn_{\sigma,K}]=0,\quad\forall K \in {\cal{T}}, \\
	& \sum_{K\in{\cal T}}\frac{|K|}{\deltat} \Big({\vr^n_K{{\bu}}^n_{K}- \vr^{n-1}_K{{\bu}}^{n-1}_{K}} \Big)\cdot \bv_K+ \sum_{K\in {\cal T}}\sum_{\sigma \in {\cal E}(K)} |\sigma|\vr^{n,{\rm up}}_\sigma {{ \hat\bu}}_{\sigma}^{n,{\rm up}}[\bu^n_\sigma\cdot \bn_{\sigma,K}]\cdot \bv_K \nonumber \\
	&\vspace{-.3cm}\qquad  \qquad - \sum_{K\in{\cal T}}p(\vr^n_K)\sum_{\sigma \in {\cal E}(K)}  |\sigma|\bv_\sigma\cdot {\vc n}_{\sigma,K}+\mu\sum_{K\in{\cal T}}\int_K \nabla\bu^n : \nabla\bv \  \dx  \label{dmom}\\ \vspace{-.3cm}
	&\qquad \qquad    \qquad \qquad\qquad + ( \mu +\lambda)\sum_{K\in{\cal T}}\int_K{\rm div}\bu^n{\rm div}\bv\,  \dx =0,
	\; \forall \bv \in {\bW_h(\Omega) }. \nonumber
\end{align}
\end{subequations}
Note that the boundary condition $\bu^n_{\sigma}=0\quad\mbox{if $\sigma\in {\cal E}_{\rm ext}$}$ is ensured by the definition of the space $V_h(\Omega)$.
Note also that if $\sigma \in {\cal E}_{\rm int}$, $\sigma = K|L$, one has, following \eqref{vhat} and \eqref{upwind1},
\[
{{ \hat\bu}}_{\sigma}^{n,{\rm up}}=u^n_K=\frac 1 {|K]} \int_K \bu^n(x) \dx \textrm{ if } \bu^n_\sigma\cdot \bn_{\sigma,K}>0 \textrm{ and }
{{ \hat\bu}}_{\sigma}^{n,{\rm up}}=u^n_L=\frac 1 {|L]} \int_L \bu^n(x) \dx \textrm{ if } \bu^n_\sigma\cdot { \bn_{\sigma,K}< 0}.
\]

It is well known that any solution $(\vr^n)_{1\le n \le N}\subset (L_h(\Omega))^N$ satisfies $\vr^n >0$  thanks to the upwind choice in \eqref{dcont} { (see e.g. \cite{GGHL2008baro,KARPER})}.
Furthermore, summing \eqref{dcont} over $K \in \mesh$ immediately yields the total conservation of mass, which reads:
\begin{equation}\label{masscons}
	\forall n=1,...N,\quad \int_\Omega \vr^n \dx = \int_\Omega \vr^0 \dx.
\end{equation}

We finally state in this section the existence result, which can be proved by a topological degree argument,
\cite{GGHL2008baro,KARPER}.
\begin{Proposition}[Existence]\label{Theorem1}
	Let $(\vr^0,\bu^0){ \in L_h^{++}}(\Omega) \times \bW_h(\Omega)$.
	Under assumptions \eqref{visc} and \eqref{hypp}, Problem \eqref{scheme} admits at least one  solution
	\[
		(\vr^n)_{1\le n \le N}\in [{ L_h^{++}}(\Omega)]^N, (\bu^n)_{1\le n \le N} \in  [\bW_h(\Omega)]^N.
	\]
\end{Proposition}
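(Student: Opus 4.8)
The plan is to prove existence of a solution to the nonlinear discrete system \eqref{scheme} by a topological degree argument, following the strategy of \cite{GGHL2008baro,KARPER}. Since the scheme is implicit and the time steps are coupled only through the explicit appearance of $(\vr^{n-1},\bu^{n-1})$ on the left-hand sides, it suffices to argue by induction on $n$: assuming $(\vr^{n-1},\bu^{n-1})\in L_h^{++}(\Omega)\times\bW_h(\Omega)$ is given (with $n=1$ initialized by the data, which by hypothesis lies in $L_h^{++}(\Omega)\times\bW_h(\Omega)$), we seek $(\vr^n,\bu^n)$ solving the $n$-th equation. For fixed $n$, \eqref{scheme} is a finite-dimensional system of $\mathrm{card}(\mathcal T)+d\,\mathrm{card}(\mathcal E_{\mathrm{int}})$ equations in the same number of unknowns $\{\vr_K^n\}_K$, $\{\bu_\sigma^n\}_\sigma$.

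First I would set up the degree-theoretic framework. Introduce a homotopy parameter $t\in[0,1]$ deforming the system \eqref{scheme} to a simple linear system whose solvability is obvious, for instance by multiplying the convective and pressure terms (and, if convenient, replacing $p(\vr)$ by $tp(\vr)+(1-t)\vr$) by $t$, while keeping the time-derivative and viscous terms intact; at $t=0$ the density equation decouples into a linear invertible system (an M-matrix structure from the upwind fluxes plus the positive diagonal $|K|/k$ yields positivity and invertibility), and the momentum equation becomes a coercive linear elliptic problem on $\bW_h(\Omega)$ solvable by Lax–Milgram in finite dimensions. The key analytical input is a set of \emph{a priori} estimates, uniform in $t\in[0,1]$, on any solution of the homotopied system: (i) a strictly positive lower bound and an upper bound on $\vr_K^n$, obtained from the discrete mass equation — the upwind scheme preserves positivity, and summing over $K$ controls $\sum_K|K|\vr_K^n=\sum_K|K|\vr_K^{n-1}$, while a lower bound follows because each $\vr_K^n$ is a convex-combination-type update of strictly positive data (this uses $\vr^{n-1}\in L_h^{++}$ crucially); (ii) an energy estimate obtained by taking $\bv=\bu^n$ in the momentum equation and combining with the mass equation tested appropriately against $H'(\vr^n)$-type quantities, giving a bound on $\mu|\bu^n|_{V_h^2}^2$ and hence on $\|\bu^n\|$ in the finite-dimensional norm. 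These bounds confine all solutions to a fixed bounded open set $\mathcal O$ of $L_h^{++}(\Omega)\times\bW_h(\Omega)$ away from the boundary $\{\vr_K=0\}$, uniformly in $t$.

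Then I would invoke invariance of the Brouwer (topological) degree under homotopy: since no solution of the $t$-homotopied system lies on $\partial\mathcal O$ for any $t\in[0,1]$, the degree of the associated map on $\mathcal O$ is independent of $t$; at $t=0$ the linear system has a unique solution which one checks lies in $\mathcal O$, so the degree is nonzero; hence at $t=1$ the map has a zero in $\mathcal O$, i.e.\ \eqref{scheme} has a solution with $\vr^n\in L_h^{++}(\Omega)$ and $\bu^n\in\bW_h(\Omega)$. Iterating over $n=1,\dots,N$ completes the proof, and the positivity $\vr^n\in L_h^{++}$ at each stage is exactly what feeds the induction. The main obstacle is step (i)–(ii): getting the \emph{uniform} bounds, and especially the strictly positive lower bound on the density that must not degenerate as $t$ varies, since the nonlinear pressure term $p(\vr)$ is only continuous at $\vr=0$ and $H'(\vr)$ may blow up — one has to exploit the convexity/monotonicity structure of the upwind mass equation to keep $\min_K\vr_K^n$ bounded below by a positive quantity depending only on $\min_K\vr_K^{n-1}$, $\theta_0$, $k$, and the discrete velocity bound, and to verify the energy estimate survives the homotopy. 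Once these estimates are in hand, the degree argument itself is routine.
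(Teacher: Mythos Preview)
Your proposal is correct and follows exactly the approach the paper indicates: the paper does not give a self-contained proof of this proposition but simply states that it ``can be proved by a topological degree argument, \cite{GGHL2008baro,KARPER}'', which is precisely the strategy you outline (induction on $n$, homotopy to a decoupled linear system, uniform a priori bounds including strict positivity of the density from the upwind structure, and invariance of the Brouwer degree).
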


\subsection{Main result: error estimate}

Let $(r,\bU) : [0,T]\times\overline\Omega\mapsto (0,\infty)\times \R^3$ be $C^2$ functions such that $\bU=\bzero$ on $\partial \Omega$.
Let  $(\vr,\bu)$ be a  solution  of the discrete problem \eqref{scheme}.
Inspired by (\ref{ent}), we introduce the discrete relative energy functional
\begin{align}\label{dent}
	{\cal E}(\vr^n,\bu^n\Big|r^n,\bU^n) &=\int_{\Omega}\Big(\frac 12\vr^n|\hat\bu^n-\hat{\bU}_h^n|^2
	+ E(\vr^n|\hat r^n)\Big)\dx \\
	&=\sum_{K\in{\cal T}}|K|\Big(\frac 12\vr_K|\bu_K^n-\bU^n_{h,K}|^2+ E(\vr_K^n|r_K^n)\Big), \nonumber
\end{align}
where
\begin{equation}\label{notation2-}
r^n(x)=r(t_n,x),\;\bU^n(x)=\bU(t_n,x),\;{ n=0,\ldots,N,}
\end{equation}
 $(\vr^n,\bu^n)$ is defined in (\ref{notation0}), and $E$ is defined by \eqref{E}.
Let us finally introduce the notations
\[
M_0=\sum_{K\in{\cal K}}|K|\vr_K^0,\mbox{ and } E_0=\sum_{K\in{\cal K}}|K|\Big(\frac 12\vr_K^0|\bu_K^0|^2 + H(\vr_K^0)\Big).
\]

Now, we are ready to state the main result of this paper.
For the sake of clarity, we shall state the theorem and perform the proofs only in the most interesting three dimensional case.
The modifications to be done for the two dimensional case, which is in fact more simple,  are mostly due to the different Sobolev embedings, and are left to the interested reader.

\begin{Theorem}[Error estimate]\label{Main}
Let $\theta_0 > 0$ and ${\cal{T}} $ be a { regular} triangulation of a bounded polyhedral domain $ \Omega\subset\R^3 $ { introduced  in Section \ref{3.1}} such that $ \theta \ge \theta_0 $, where $ \theta $ is defined in (\ref{reg}).
Let $p$ be a twice continuously differentiable function satisfying assumptions (\ref{hypp}), (\ref{pressure1}) with $\gamma\ge 3/2$, and the additional assumption  (\ref{pressure2}) in the case $\gamma<2$.
Let the viscosity coefficients satisfy assumptions (\ref{visc}).
Suppose that $(\vr^0,\bu^0) \in L_h^{+}(\Omega) \times \bW_h(\Omega)$ and that $(\vr^n)_{1\le n \le N}\subset [L_h^{+}(\Omega)]^N$, $(\bu^n)_{1\le n \le N} \subset [\bW_h(\Omega)]^N$ is a solution of the discrete problem \eqref{scheme}.
 Let $(r,\bU)$ in the class
\begin{subequations}\label{dr,U}
\begin{align}
&  r\in C^2([0,T]\times\overline\Omega),\quad 0<\underline r:=\min_{(t,x)\in \overline Q_T}\le r(t,x)\le\overline r:= \max_{(t,x)\in \overline Q_T}r(t,x), \label{dr}\\
 & \bU\in C^2([0,T]\times\overline\Omega;\R^3), \;\bU|_{\partial\Omega}=0 \label{U}
\end{align}
   \end{subequations}
   be a (strong) solution of problem \eqref{pbcont}.
  Then there exists
\begin{multline*}
  c=c\Big(T,|\Omega|, {\rm diam}(\Omega),\theta_0,\gamma, M_0, E_0, \underline r,\overline r,\\ |p'|_{C^1([\underline r,\overline r])},
   \|(\nabla r, \partial_t r, \partial_t\nabla r, \partial^2_t r,  \bU, \nabla\bU, { \nabla^2\bU}, \partial_t\bU, { \partial_t^2\bU,} \partial_t\nabla\bU)\|_{L^\infty(Q_T;{ \Rm^{68}})}\Big) \in (0,+\infty)
\end{multline*}
  (independent of $h$, $\deltat$) such that for any $m=1,\ldots,N,$
  \begin{equation}\label{errorestimate}
  {\cal E}(\vr^{ m},\bu^{ m}\Big|r^{ m},\bU^{ m}){ +\deltat\sum_{n=1}^m\sum_{{K}\in {\cal T}}\int_K|\Grad(u^n-\vc U^n_h)|^2{\rm d x}}
  \le c\Big({\cal E}(\vr^0,\bu^0\Big|r^0,\bU^0)+ h^{A}
  +\sqrt{\deltat}\Big),
  \end{equation}
where
\begin{equation}\label{defAmain}
 A = \begin{cases}
		\frac {2\gamma-3}\gamma\quad\mbox{if }\gamma\in (3/2,2],\\
		1/2\quad\mbox{if }\gamma> 2.
    \end{cases}
\end{equation}
\end{Theorem}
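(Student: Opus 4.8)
The overall strategy follows the roadmap announced in the introduction: transport the continuous relative energy method to the discrete level and close the argument with a discrete Gronwall inequality. First I would invoke Proposition~\ref{Theorem1} to guarantee the existence of a discrete solution $(\vr^n,\bu^n)$ with positive density, and then derive the a~priori estimates furnished by the scheme: the discrete energy inequality obtained by testing \eqref{dmom} with $\bv=\bu^n$ (using the discrete mass balance \eqref{dcont} to handle the time-derivative and convection terms, as in \cite{KARPER}), conservation of mass \eqref{masscons}, and the resulting bounds on $\|\sqrt{\vr^n}\hat\bu^n\|_{L^2}$, $\|H(\vr^n)\|_{L^1}$, and $\deltat\sum_n|\bu^n|_{V_h^2}^2$. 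These are exactly the ingredients collected in Section~\ref{4}. In parallel one needs the numerical-diffusion (upwinding) dissipation term, which will eventually be responsible for the powers of $h$ in \eqref{defAmain}.

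Next I would establish the discrete analogue of the relative energy inequality \eqref{p5}, i.e. Theorem~\ref{Theorem4}: take the discrete kinetic-energy identity and the discrete identity for $E(\vr^n|r^n)$ (built from \eqref{dcont} tested against $H'(\hat r^n)$-type quantities and from the convexity of $H$), combine them, and carefully track all the consistency defects created by the nonconforming $\bW_h$ space, by the upwind choice \eqref{upwind1}, and by the piecewise-constant projections $\hat{(\cdot)}$ in \eqref{vhat}. This yields \eqref{dent} at time $t_m$ bounded by its value at $t_0$, plus the viscous dissipation of $\bu^n-\bU_h^n$, plus a sum of "remainder" terms $R_1+\dots+R_k$. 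In Section~\ref{6}, Lemma~\ref{refrelenergy}, these remainders are rearranged; in Section~\ref{7}, Lemma~\ref{strongentropy}, one plugs in that $(r,\bU)$ is an \emph{exact smooth} solution of \eqref{pbcont}, so the "continuous" part of the remainder collapses (this is precisely the algebra behind \eqref{p5} with the strong-solution test pair), leaving only: (i) terms controlled by $c\,{\cal E}(\vr^n,\bu^n|r^n,\bU^n)$ via the coercivity inequality $E(\vr|r)\gtrsim |\vr-r|^2$ for $\vr$ near $\underline r,\overline r$ and $\gtrsim \vr^\gamma$ (or $\vr$, using \eqref{pressure1}--\eqref{pressure2}) for large/small $\vr$; (ii) an absorbable fraction of the viscous dissipation, handled by Young's inequality and the discrete Sobolev/Poincaré estimates of Appendix~\ref{3}; and (iii) pure consistency errors of order $h^{A}$ and $\sqrt{\deltat}$, where interpolation estimates for $\bU_h$ versus $\bU$, the Taylor remainder in time ($\sqrt{\deltat}$ from $(\vr^n-\vr^{n-1})$-type telescoping), and the upwind bound combine; the exponent $A$ in \eqref{defAmain} reflects the worst Hölder pairing available, namely pairing an $L^{2\gamma/(2\gamma-3)}$-type density factor against an $L^{\infty}$ or $H^1$-type velocity factor, which degrades as $\gamma\downarrow 3/2$.

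Finally I would assemble these estimates into a discrete Gronwall inequality (Lemma~\ref{Gronwall}): writing $e^m={\cal E}(\vr^m,\bu^m|r^m,\bU^m)+\deltat\sum_{n=1}^m|\bu^n-\bU_h^n|_{V_h^2}^2$, the previous step gives $e^m\le e^0 + c\,\deltat\sum_{n=1}^{m}e^n + c(h^{A}+\sqrt{\deltat})$, and for $\deltat$ small the $n=m$ term is absorbed on the left, yielding $e^m\le c\bigl(e^0+h^A+\sqrt\deltat\bigr)$ by the discrete Gronwall lemma; noting $e^0={\cal E}(\vr^0,\bu^0|r^0,\bU^0)$ gives \eqref{errorestimate}. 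The dependence of the constant $c$ on the listed data and on $\theta_0$ (through the discrete functional inequalities) is tracked throughout.

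\textbf{Main obstacle.} The delicate point is step two: estimating the remainder terms in the discrete relative energy inequality \emph{without} any assumed $L^\infty$ bound on the discrete density. The convective remainder, in particular, pairs the numerical density (known only in $L^\infty(L^\gamma)$ by the energy estimate) against differences of velocities and of their piecewise-constant averages; controlling it requires the sharp discrete Sobolev embeddings and the upwinding-dissipation term from Section~\ref{4}, together with the pressure coercivity from \eqref{pressure1}--\eqref{pressure2}, and it is exactly the balance in this pairing that forces the restriction $\gamma\ge 3/2$ and produces the exponent $A$. The nonconformity of $\bW_h(\Omega)$ adds a second layer of consistency defects (jumps across faces) that must be absorbed into the viscous dissipation via a discrete second Korn/broken-Poincaré inequality, which is why the $L^p$, $p\ne 2$, results of Appendix~\ref{3} are needed.
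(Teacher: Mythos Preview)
Your proposal is correct and follows essentially the same route as the paper: a~priori estimates and upwind-dissipation bounds (Section~\ref{4}), the exact discrete relative energy inequality (Theorem~\ref{Theorem4}), its approximate form after estimating residuals (Lemma~\ref{refrelenergy}), the discrete identity for the strong solution (Lemma~\ref{strongentropy}), and the Gronwall closure (Lemma~\ref{Gronwall}). Your identification of the main obstacle---controlling the convective remainder with density only in $L^\infty(L^\gamma)$, which forces $\gamma\ge 3/2$ and produces the exponent $A$---is exactly right.

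One small point on the Gronwall closure: you write that ``for $\deltat$ small the $n=m$ term is absorbed on the left''. The paper avoids any smallness restriction on $\deltat$ by instead invoking the uniform bound \eqref{est4} on ${\cal E}(\vr^n,\bu^n|r^n,\bU^n)$ to estimate the last summand by $c\,\deltat$ and push it into the error term, reducing the sum to $n=1,\dots,m-1$ before applying the discrete Gronwall lemma. Also, no Korn-type inequality is needed here since the viscous term is $\mu\Delta+(\mu+\lambda)\nabla\mathrm{div}$ rather than the symmetric gradient; the jump control you allude to is handled directly by Lemma~\ref{Lemma6}.
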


{ Starting from this point, unlike in Section \ref{intro}, here and hereafter, the symbol ${\cal E}$ refers always to the {\it discrete} relative energy functional defined in (\ref{dent}).}

\begin{Remark}\label{rem1}{\rm ~}\\
{
Assumptions (\ref{dr,U}) on the regularity of the strong solution $(r,\vc U)$ in Theorem \ref{Main} may be slightly relaxed: It is enough to suppose
$$
(r,\vc U)\in C^1([0,T]\times\overline\Omega;\R^4),\;\nabla^2 \vc U\in C([0,T]\times\overline\Omega;\R^3),\;
0<\inf_{(t,x)\in \overline Q_T} r(t,x),
$$
$$
\partial_t^2r\in L^1(0,T;L^{\gamma'}(\Omega)),\;\partial_t\nabla r\in L^2(0,T; L^{6\gamma/(5\gamma-6)}(\Omega;\R^3)),\;
(\partial_t^2\vc U,\partial_t\nabla\vc U)\in L^2(0,T; L^{6/5}(\Omega;\R^{12})).
$$
The constant in the error estimate depends on $\underline r$ and the norms of $r$ and $\vc U$ in these spaces.
This improvement is at the price of more technicalities in  estimates of several residual terms, namely in estimates (\ref{R2.1}--\ref{R2.2}), (\ref{R5.2}), (\ref{R6.4}), (\ref{cR2.1}), (\ref{cR2.3}--\ref{cR2.4})
and (\ref{cP1}).}
\end{Remark}
\begin{Remark}\label{rmq-thmp}{\rm ~ }\\
\vspace{-3mm}
\begin{enumerate}
\item
Theorem \ref{Main} holds also for two dimensional bounded { polygonal} domains under the assumption that $\gamma\ge1$.
Assumption (\ref{pressure2}) on the asymptotic behavior of pressure near $0$ is no more necessary in this case.
The value of $A$ in the error estimate (\ref{errorestimate}) is
\[
A =\left\{
\begin{array}{c}
\frac {2\gamma-2}\gamma\quad\mbox{if $\gamma\in (1,2]$},\\
1\quad\mbox{if $\gamma> 2$}.
\end{array}
\right.
\]
{
\item Suppose that the discrete initial data $(\vr^0,\vc u^0)$  coincide with the projection $(\hat r^0,\hat {\vc U}_{h}^0)$ of the initial data determining the strong solution. Then formula (\ref{errorestimate}) provides in terms of classical Lebesgue spaces the following bounds:
    $$
    \|\vr^m-r^m\|^2_{L^2(\Omega\cap\{\underline r/2\le\vr^m\le 2\overline r\})}+
    \|\hat{\vc u}^m- {\vc U}^m\|^2_{L^2(\Omega\cap\{\underline r/2\le\vr^m\le 2\overline r\})}\le
     c\Big(h^{A}
  +\sqrt{\deltat}\Big)
    $$
    for the "essential part" of the solution (where the numerical density remains bounded from above and from below outside zero), and
    $$
    |\{\vr^m\le\underline r/2\}|+ |\{\vr^m\ge 2\overline r\}| +\|\vr^m\|^\gamma_{L^\gamma(\Omega\cap \{\vr^m\ge 2\overline r\})}+\|\vr^m|\hat{\vc u}^m-{\vc U}^m|^2\|_{L^1(\Omega\cap\{\vr^m\ge 2\overline r\})}\le c\Big(h^{A}
  +\sqrt{\deltat}\Big)
    $$
    for the "residual part" of the solution, where the numerical density can be "close" to zero or infinity.
    (In the above formula, for $B\subset\Omega$, $|B|$ denotes the Lebesgue measure of $B$.)

    Moreover, in the particular case of $p(\vr)=\vr^2$ (that however represents a non physical situation)
    $E(\vr| r)=(\vr- r)^2$ and the error estimate (\ref{errorestimate}) reads
    $$
    \|\vr^m- r^m\|^2_{L^2(\Omega)}+
    \|\vr^m|\hat{\vc u}^m- {\vc U}^m|^2\|_{L^1(\Omega)}\le
     c\Big(\sqrt h
  +\sqrt{\deltat}\Big)
  $$
}

\item Theorem \ref{Main} can be viewed as a discrete version of Proposition \ref{proposition}.
It is to be noticed that the assumptions on the constitutive law for pressure guaranteeing the error estimates for the scheme \eqref{scheme} are somewhat stronger ($\gamma\ge 3/2$) than the assumptions needed for the stability in the continuous case ($\gamma\ge 1$).
The threshold value  $\gamma=3/2$ is however in accordance with the existence theory of weak solutions.
The assumptions on the regularity of the strong solution to be compared with the discrete solution in the scheme are slightly stronger than those needed to establish the stability estimates in the continuous case.

\item If $d=3$, we notice that the assumptions on the pressure (as function of the density) in Theorem \ref{Main} are  compatible with the isentropic case $p(\vr)=\vr^\gamma$ for all values $\gamma\ge 3/2$.

    \item { The scheme \cite{KARPER} contains in addition artificial stabilizing terms both in the continuity and momentum equations. These terms are necessary for the convergence proof in \cite{KARPER} even for the large values of $\gamma$. It is to be noticed that the error estimate  in Theorem \ref{Main} is formulated for the numerical scheme without these stabilizing terms. Of course similar error estimate is a fortiori valid also for the scheme with the stabilizing terms, however, this issue is not discussed in the present paper.}

\end{enumerate}
\end{Remark}

{ The rest of the paper is devoted to the proof of Theorem \ref{Main}. For the sake of simplicity, and in order to simplify notation, we present the proof for the uniformly regular mesh meaning that there exist positive numbers $c_i=c_i(\theta_0)$ such that
\begin{equation}\label{reg1}
c_1h_K\le h\le c_2 h_\sigma\le c_3 h_K,\quad c_1|K|\le  |\sigma| h\le c_2|\sigma|h_K\le c_3|\sigma|h_\sigma\le c_4 |K|
\end{equation}
for any $K\in {\cal T}$ and any $\sigma\in {\cal E}$.
The necessary (small) modifications needed to accommodate the regular mesh satisfying only (\ref{reg1-}) are straightforward. Even with this simplification the proof is quite involved, and some details have to be necessarily omitted to keep its length within reasonable bounds. The reader can eventually find them in the extended version of this paper available on ArXiv \cite{GHMNArxive}.
}

\section{Mesh independent estimates}\label{4}

We start by a remark on the notation.
From now on, the letter $c$ denotes positive numbers that may tacitly depend on $T$, $|\Omega|$, ${\rm diam}(\Omega)$, $\gamma$, $\alpha$, $\theta_0$, $\lambda$ and $\mu$, and on other parameters;
the dependency on these other parameters (if any) is always explicitly indicated in the arguments of these numbers.
These numbers can take different values even in the same formula.
They are always independent of the size of the discretisation $\deltat$ and $h$.

\subsection{Energy { Identity}}
Our analysis starts with an energy inequality, which is crucial both in the convergence analysis  and in the error analysis.
We recall this energy estimate which is already given in \cite{KARPER}, along with its proof for the sake of completeness.

\begin{lm}\label{Theorem3}
  Let $(\vr^0,\bu^0) \in L_h^{+}(\Omega) \times \bW_h(\Omega)$ and suppose that $(\vr^n)_{1\le n \le N}\in [L_h^{+}(\Omega)]^N$, $(\bu^n)_{1\le n \le N} \in [\bW_h(\Omega)]^N$ is a solution of the discrete problem \eqref{scheme} with the pressure $p$ satisfying condition (\ref{hypp}).
Then there exist
\begin{align*}
&\overline \vr^n_{\sigma}\in [\min(\vr^n_K,\vr^n_L), \max(\vr^n_K,\vr^n_L)],\;\sigma=K|L\in {\cal E}_{\rm int},\; n=1,\ldots,N \\
&\overline\vr_K^{n-1,n}\in [\min(\vr^{n-1}_K,\vr^n_K), \max(\vr^{n-1}_K,\vr^n_K)],\; K\in {\cal T},\; n=1,\ldots,N
\end{align*}
such that
\begin{multline}
\sum_{K\in {\cal T}}{|K|}\Big(\frac 12\vr^m_K|{\bu}^m_K|^2
 +H(\vr_K^m)\Big)
-\sum_{K\in {\cal T}}|K|\Big(\frac 12\vr^{0}_K|{\bu}^{0}_K|^2
+H(\vr_K^{0})\Big)
\\
+\deltat \sum_{n=1}^m\sum_{K\in {\cal T}}\Big(\mu\int_K|\Grad\bu^n|^2 \dx+( \mu +\lambda)\int_K|{\rm div}\bu^n|^2 \dx\Big)
\\
+ [D^{m,|\Delta\bu|}_{\rm time}]+ [D^{m,|\Delta\vr|}_{\rm time}]+ [D^{m,|\Delta\bu|}_{\rm space}] +  [D^{m, |\Delta\vr|}_{\rm space}]= 0,
\label{denergyinequality}
\end{multline}
for all $m=1,\ldots,N$,
where
\begin{subequations}\label{upwinddissipation}
	\begin{align}
		& [D^{m,|\Delta\bu|}_{\rm time}]=\sum_{n=1}^m\sum_{K\in {\cal T}}{|K|}\vr_K^{n-1}\frac {|{\bu}_K^n-{\bu}_K^{n-1}|^2} 2
		\label{upwinddissipation_1}\\
		&[D^{m,|\Delta\vc \vr|}_{\rm time}]=\sum_{n=1}^m\sum_{K\in {\cal T}}|K|H''(\overline\vr_K^{n-1,n})\frac {|{\vr}_K^n-{\vr }_K^{n-1}|^2} 2,
		\label{upwinddissipation_2}\\
		&[D^{m,|\Delta\bu|}_{\rm space}]=\deltat\sum_{n=1}^m\sum_{\sigma=K|L\in{\cal E}_{\rm int}}|\sigma|\vr_\sigma^{n,{\rm up}}\frac{({\bu}^n_K-{{\bu}}^n_L)^2} 2\;|{\bu}^n_\sigma\cdot\bn_{\sigma,K}|,
		\label{upwinddissipation_3}\\
		&[D^{m, |\Delta\vr|}_{\rm space}]=\deltat\sum_{n=1}^m\sum_{\sigma=K|L \in{\cal E}_{\rm int}}|\sigma|H''(\overline \vr^n_{\sigma})\frac{(\vr^n_K-\vr^n_L)^2}2 \;|{\bu}^n_\sigma\cdot{\vc n}_{\sigma,K}|.
		\label{upwinddissipation_4}
\end{align}
 \end{subequations}
\end{lm}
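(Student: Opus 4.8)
\textbf{Proof strategy for Lemma \ref{Theorem3} (discrete energy identity).}

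The plan is to derive the identity by testing the discrete momentum equation \eqref{dmom} with the natural choice $\bv=\deltat\,\bu^n$ and by multiplying the discrete continuity equation \eqref{dcont} by $\deltat\,H'(\vr^n_K)$ and summing over $K$, then adding the two and summing over the time levels $n=1,\ldots,m$. The point is that each of the three types of terms in \eqref{dmom} --- the discrete time derivative of the momentum, the convective term, and the pressure term --- will, after this manipulation, split into a "clean" telescoping part (kinetic energy, resp.\ pressure potential $H$) plus a non-negative remainder, which are exactly the dissipation terms $[D^{m,|\Delta\bu|}_{\rm time}]$, $[D^{m,|\Delta\vr|}_{\rm time}]$, $[D^{m,|\Delta\bu|}_{\rm space}]$, $[D^{m,|\Delta\vr|}_{\rm space}]$. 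The viscous terms already appear in the right form after multiplication by $\deltat$ and summation.

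First I would treat the two discrete time-derivative contributions. For the kinetic energy term $\sum_K\frac{|K|}{\deltat}(\vr^n_K\bu^n_K-\vr^{n-1}_K\bu^{n-1}_K)\cdot\bu^n_K$, I use the elementary algebraic identity $a\cdot(a-b)=\tfrac12|a|^2-\tfrac12|b|^2+\tfrac12|a-b|^2$ together with the cell mass balance \eqref{dcont}: writing $\vr^n_K\bu^n_K-\vr^{n-1}_K\bu^{n-1}_K = \vr^{n-1}_K(\bu^n_K-\bu^{n-1}_K) + (\vr^n_K-\vr^{n-1}_K)\bu^n_K$ and replacing $\vr^n_K-\vr^{n-1}_K$ via \eqref{dcont}, one obtains after summation on $n$ the telescoped kinetic energy plus $[D^{m,|\Delta\bu|}_{\rm time}]$ plus exactly the convective-type contribution that will cancel (or combine) with the term coming from the convective flux in \eqref{dmom}. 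For the pressure potential, I use that $H$ is convex (by \eqref{hypp}) so the second-order Taylor expansion with integral remainder gives $H'(\vr^n_K)(\vr^n_K-\vr^{n-1}_K) = H(\vr^n_K)-H(\vr^{n-1}_K) + H''(\overline\vr^{n-1,n}_K)\tfrac{|\vr^n_K-\vr^{n-1}_K|^2}{2}$ for some intermediate value $\overline\vr^{n-1,n}_K$ in the interval between $\vr^{n-1}_K$ and $\vr^n_K$ (this is where the mean value $\overline\vr^{n-1,n}_K$ in the statement is produced). Summing in $K$ and $n$ this accounts for the $H$-telescoping term plus $[D^{m,|\Delta\vr|}_{\rm time}]$, together with a term $-\deltat\sum_n\sum_K H'(\vr^n_K)\sum_{\sigma\in\mathcal E(K)}|\sigma|\vr^{n,\rm up}_\sigma[\bu^n_\sigma\cdot\bn_{\sigma,K}]$ coming from the flux in \eqref{dcont}.

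Next I would handle the two flux contributions --- the convective flux in the momentum equation and the $H'$-weighted flux from the continuity equation --- by reorganising the sums over $(K,\sigma)$ into sums over internal faces $\sigma=K|L$ (the boundary faces contribute nothing since $\bu^n_\sigma=0$ there), using $\bn_{\sigma,K}=-\bn_{\sigma,L}$. For the convective momentum flux tested against $\bu^n$, after pairing the two half-faces and using the upwind definition \eqref{upwind1} of $\hat\bu^{n,\rm up}_\sigma$ together with the identity $a\cdot(a-b)=\tfrac12|a|^2-\tfrac12|b|^2+\tfrac12|a-b|^2$ once more, the "square-difference" part yields precisely $[D^{m,|\Delta\bu|}_{\rm space}]$ while the $\tfrac12|a|^2-\tfrac12|b|^2$ part combines with the convective remainder left over from the time-derivative step and with a $\vr$-flux term; using \eqref{dcont} again this whole block telescopes to zero. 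Similarly, for the $H'$-weighted density flux I pair half-faces and use a discrete "chain rule / convexity" identity for $H$ across the face: $H'(\vr_K)\vr^{\rm up}_\sigma - H'(\vr_L)\vr^{\rm up}_\sigma$ reorganises (again via a second-order Taylor remainder with intermediate value $\overline\vr^n_\sigma\in[\min(\vr^n_K,\vr^n_L),\max(\vr^n_K,\vr^n_L)]$) into a telescoping-in-space piece that vanishes upon summation plus the non-negative term $[D^{m,|\Delta\vr|}_{\rm space}]$. Finally, the pressure term $-\sum_K p(\vr^n_K)\sum_\sigma|\sigma|\bu^n_\sigma\cdot\bn_{\sigma,K}$ in \eqref{dmom} matches, via the ODE $\vr H'(\vr)-H(\vr)=p(\vr)$ recalled after \eqref{H} (so that $p(\vr)=\vr H'(\vr)-H(\vr)$), exactly the leftover flux terms from the continuity-equation manipulation, so everything cancels except the claimed identity.

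The main obstacle --- and the one requiring genuine care rather than routine bookkeeping --- is the combinatorics of matching the three "leftover flux blocks" (the convective remainder from the momentum time derivative, the $\vr$-flux from the kinetic-energy step, and the $H'$-flux from the continuity equation) against each other and against the pressure term, face by face and with the correct upwind values, so that they cancel identically without any sign slippage; this is precisely the place where the upwind choice \eqref{upwind1} is essential and where, if one were sloppy, spurious terms would survive. A secondary technical point is the rigorous justification of the intermediate values $\overline\vr^{n-1,n}_K$ and $\overline\vr^n_\sigma$, which follows from the mean value theorem applied to $H'$ (legitimate since $\vr^n_K>0$ by the upwind positivity property recalled before \eqref{masscons}, so $H''$ is evaluated at strictly positive arguments).
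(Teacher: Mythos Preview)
Your proposal is correct and follows essentially the same approach as the paper: test \eqref{dmom} with $\bu^n$, multiply \eqref{dcont} by $H'(\vr^n_K)$, and use Taylor remainders to produce the mean values $\overline\vr_K^{n-1,n}$ and $\overline\vr^n_\sigma$. The only organisational difference is that the paper makes explicit a third auxiliary identity---multiplying \eqref{dcont} by $\tfrac12|\bu^n_K|^2$ and summing over $K$---rather than, as you do, substituting $\vr^n_K-\vr^{n-1}_K$ from \eqref{dcont} inside the kinetic-energy computation; this extra named identity lets the paper pair the convective flux from \eqref{dmom} directly with the $\tfrac12|\bu^n_K|^2$-flux, and the pressure term directly with the $H'$-flux, which makes the face-by-face cancellations you flag as the ``main obstacle'' entirely transparent (no combinatorial bookkeeping is needed beyond writing out the contribution of a single face $\sigma=K|L$ in each case).
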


\begin{proof}
Mimicking the formal derivation of the total energy conservation in the continuous case we take as test function $\bv = \bu^n$ in the discrete momentum equation (\ref{dmom})$^n$ and obtain
\begin{equation}\label{denergy1}
	 I_1+ I_2+I_3+I_4 = 0,
\end{equation}
where
\begin{align*}
& { I_1=\sum_{K\in {\cal T}}\frac {|K|}\deltat(\vr_k^n\bu_K^n-\vr_K^{n-1}\bu_K^{n-1})\cdot\bu_K^n},
 	&&{ I_2}= \sum_{K\in {\cal T}}\stik |\sigma|\vr^{n,{\rm up}}_\sigma {{ \hat\bu}}_{\sigma}^{n,{\rm up}}\cdot\bu^n_K\,[\bu^n_\sigma\cdot\bn_{\sigma,K}],
 \\
 &{ I_3} = -\sum_{K\in {\cal T}}\stik |\sigma|p(\vr^n_K)[\bu^n_{\sigma}\cdot{\vc n}_{\sigma,K}], &&
	{ I_4= \sum_{K\in{\cal T}}\int_K\Big(\mu\nabla\bu^n:\nabla\bu^n +( \mu +\lambda){\rm div}{\bu^n}{\rm div}{{\bu}^n}\Big) \dx.}
\end{align*}
Next, we multiply the continuity equation (\ref{dcont})$^n_K$ by $\frac 12|\bu^n_K|^2$ and sum over all
$K\in {\cal T}$. We get
\begin{equation}\label{denergy2}
I_5+I_6 = 0
\end{equation}
\[
\mbox{with }I_5 = -\sum_{K\in {\cal T}}\frac 12\frac{|K|}{\deltat} ({ \vr_{K}^n - \vr_{K}^{n-1}})|\bu^n_K|^2 \mbox{ and } I_6=-\sum_{K\in {\cal T}}\stik\frac 12 |\sigma| \vr_\sigma^{n,\rm up}[{\bu_\sigma^n}\cdot\bn_{\sigma,K}]|\bu^n_K|^2.
\]
Finally, we multiply the continuity equation (\ref{dcont})$^n_K$ by $H'(\vr_K^n)$ and sum over all $K\in {\cal T}$.
We obtain
\begin{equation}\label{denergy3}
I_7+I_8 =0,
\end{equation}
\[
\mbox{with }I_7=\sum_{K\in {\cal T}}\frac{|K|}{\deltat} ({ \vr_{K}^n - \vr_{K}^{n-1}}) H'(\vr^n_K)
 \mbox{ and } I_8=\sum_{K\in {\cal T}}\stik |\sigma| \vr_\sigma^{n,\rm up}[{\bu_\sigma^n}\cdot\bn_{\sigma,K}] H'(\vr^n_K).
\]

We now sum formulas (\ref{denergy1})--(\ref{denergy3}) in several steps.

\vspace{2mm}

{\bf Step 1:} {\it Term $I_1+ I_7$.}
We verify by a direct calculation  that
\[
I_1= \sum_{K\in {\cal T}}\frac{|K|}{\deltat}\Big(\frac 12 \vr^n_K|\bu^n_K|^2-\frac 12 \vr^{n-1}_K|\bu^{n-1}_K|^2\Big)
+\sum_{{ K\in{\cal T}}}\frac{|K|}{\deltat}\vr_K^{n-1}\frac {|{\bu}_K^n-{\bu}_K^{n-1}|^2} 2.
\]
In order to transform the term $I_7$, we employ the Taylor formula
\[
H'(\vr_K^n)\Big(\vr^n_K-\vr^{n-1}_K\Big)= H(\vr_K^n)-H(\vr_K^{n-1}) +\frac 12 H''(\overline\vr_K^{n-1,n})(\vr_K^n-\vr_K^{n-1})^2,
\]
where $\overline\vr_K^{n-1,n}\in [\min(\vr_K^{n-1},\vr_K^{n}), \max(\vr_K^{n-1},\vr_K^{n})]$.
Consequently,
\begin{multline}\label{denergy4}
I_1+I_7=\sum_{K\in {\cal T}}\frac{|K|}{\deltat}\Big(\frac 12 \vr^n_K|\bu^n_K|^2-\frac 12 \vr^{n-1}_K|\bu^{n-1}_K|^2\Big)
+ \sum_{K\in {\cal T}}\frac{|K|}{\deltat}\Big(H(\vr_K^n)-H(\vr_K^{n-1})\Big)
\\+\sum_{K\in{\cal T}}\frac{|K|}{\deltat}\vr_K^{n-1}\frac {|{\bu}_K^n-{\bu}_K^{n-1}|^2} 2
+\sum_{K\in{\cal T}}\frac{|K|}{\deltat}H''(\overline\vr_K^{n-1,n})\frac {|{\vr}_K^n-{\vr }_K^{n-1}|^2} 2.
\end{multline}
{\bf Step 2:} {\it Term $I_2+I_6$}.
The contribution of the face $\sigma=K|L$ to the sum $I_2+I_6$ reads, by virtue of (\ref{upwind1}),
\begin{multline*}
|\sigma|\, [\bu^n_\sigma\cdot\bn_{\sigma,K}]^+\,\vr_K \Big(|\bu_K^n|^2 -\bu^n_K\cdot\bu_L^n -
\frac 12|\bu^n_K|^2 + \frac 12|\bu^n_L|^2\Big) \\
+ |\sigma|\,[\bu^n_\sigma\cdot\bn_{\sigma,L}]^+\,\vr_L \Big(|\bu_L^n|^2 -\bu^n_K\cdot\bu_L^n -
\frac 12|\bu^n_L|^2 + \frac 12|\bu^n_K|^2\Big).
 \end{multline*}
Consequently,
\begin{equation}\label{denergy5}
I_2+I_6=\sum_{\sigma=K|L\in{\cal E}_{\rm int}} |\sigma| |\bu^n_\sigma\cdot\bn_{\sigma,K}|\vr_{\sigma}^{n,{\rm up}}
\frac{(\bu^n_K-\bu^n_L)^2} 2.
\end{equation}
{\bf Step 3:} {\it Term $I_3+I_8$.}
We have
\begin{multline*}
I_8=\sum_{K\in {\cal T}}\stik|\sigma|\,[{\bu_\sigma^n}\cdot\bn_{\sigma,K}]\,\Big( H'(\vr^n_K)( \vr_\sigma^{n,\rm up}-\vr^n_K)+ H(\vr^n_K)\Big)
\\+ \sum_{K\in {\cal T}}\stik|\sigma|\,[{\bu_\sigma^n}\cdot\bn_{\sigma,K}]\,\Big( \vr^n_K H'(\vr^n_K) -H(\vr^n_K)\Big).
 \end{multline*}
Recalling (\ref{upwind1}), we may write the contribution of the face $\sigma=K|L$ to the first sum in $I_8$; it reads
\begin{multline*}
|\sigma|\,[{\bu_\sigma^n}\cdot\bn_{\sigma,K}]^+\,\Big(H(\vr^n_K)- H'(\vr^n_L)(\vr^n_K-\vr^n_L)-H(\vr^n_L)\Big)
\\+|\sigma|\,[{\bu_\sigma^n}\cdot\bn_{\sigma,L}]^+\,\Big(H(\vr^n_L)- H'(\vr^n_K)(\vr^n_L-\vr^n_K)-H(\vr^n_K)\Big).
 \end{multline*}
Recalling that $rH'(r)-H(r)=p(r)$, we get, employing the Taylor formula
\[
I_3+I_8=\sum_{\sigma=K|L\in {\cal E}_{\rm int}}\,|{\bu_\sigma^n}\cdot\bn_{\sigma,K}| H''(\overline \vr^n_{\sigma})\frac{(\vr^n_K-\vr^n_L\Big)^2} 2
\]
with some $\overline \vr^n_{\sigma}\in [\min(\vr^n_K,\vr^n_L), \max(\vr^n_K,\vr^n_L)]$.

\vspace{2mm}

{\bf Step 4:} {\it Conclusion}

Collecting the results of Steps 1-3 we arrive at
\begin{multline}\label{denergyinequality1}
\sum_{K\in {\cal T}}\frac 12\frac{|K|}{\deltat}\Big(\vr^n_K|{\bu}^n_K|^2-\vr^{n-1}_K|{\bu}^{n-1}_K|^2\Big)
+ \sum_{K\in {\cal T}}\frac{|K|}{\deltat}\Big(H(\vr_K^n)-H(\vr_K^{n-1})\Big) +\sum_{K\in {\cal T}}\Big(\mu\int_K|\Grad\bu^n|^2 \dx
\\+( \mu+\lambda)\int_K|{\rm div}\bu^n|^2 \dx\Big)
+\sum_{K\in {\cal T}}\frac{|K|}{\deltat}\vr_K^{n-1}\frac {|{\bu}_K^n-{\bu}_K^{n-1}|^2} 2
+\sum_{K\in {\cal T}}\frac{|K|}{\deltat}H''(\overline\vr_K^{n-1,n})\frac {|{\vr}_K^n-{\vr }_K^{n-1}|^2} 2
\\+ \sum_{\substack{\sigma\in{\cal E}_{\rm int} \\ \sigma=K|L}}|\sigma|\vr_\sigma^{n,{\rm up}}\frac{({\bu}^n_K-{{\bu}}^n_L)^2} 2\;|{\bu}^n_\sigma\cdot\bn_{\sigma,K}|
+\sum_{\substack{\sigma\in{\cal E}_{\rm int} \\ \sigma=K|L}}|\sigma|H''(\overline \vr^n_{\sigma})\frac{(\vr^n_K-\vr^n_L\Big)^2} 2 \;|{\bu}^n_\sigma\cdot{\vc n}_{\sigma,K}|= 0.
\end{multline}
At this stage, we get the statement of Lemma \ref{Theorem3} by multiplying (\ref{denergyinequality1})$^{n}$ by $\deltat$ and summing from $n=1$ to $n=m$. Lemma \ref{Theorem3} is proved.
\end{proof}

\subsection{Estimates}

We have the following corollary of Lemma \ref{Theorem3}.
\begin{cor}\label{Corollary1}
\begin{description}
\item{(1)}
Under assumptions of Lemma \ref{Theorem3}, there exists $c=c(M_0,E_0)>0$ (independent of $h$ and $\deltat$) such that
\begin{equation}\label{est0}
|\bu|_{L^2(0,T;V^2_h(\Omega;\Rm^3)}\le c
\end{equation}
\begin{equation}\label{est1}
\|\bu\|_{L^2(0,T;L^6(\Omega;\Rm^3))}\le c
\end{equation}
\begin{equation}\label{est2}
\|\vr\hat{\bu}^2\|_{L^\infty(0,T;L^1(\Omega))}\le c.
\end{equation}
 \item{(2)} If in addition the pressure satisfies assumption (\ref{pressure1}) then
\begin{equation}\label{est3}
\|\vr\|_{L^\infty(0,T;L^\gamma(\Omega))}\le c
\end{equation}
\item{(3)} If the pair $(r,\bU)$ belongs to the class (\ref{dr,U}) there exists $c=c(M_0,E_0,\underline r,\overline r, \|\bU, \nabla \bU\|_{L^\infty(Q_T;\Rm^{12})})>0$
such that for all $n=1,\ldots,N$,
\begin{equation}\label{est4}
{\cal E}(\vr^n,\bu^n|r^n,\bU^n)\le c,
\end{equation}
where the discrete relative energy ${\cal E}$ is defined in (\ref{dent}).
\end{description}
\end{cor}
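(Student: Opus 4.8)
The plan is to derive all four estimates directly from the energy identity \eqref{denergyinequality} of Lemma \ref{Theorem3}, exploiting the non-negativity of the four dissipation terms \eqref{upwinddissipation_1}--\eqref{upwinddissipation_4} and of $H$, together with mass conservation \eqref{masscons}.

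\textbf{Step 1 (the basic bound from the energy identity).} Since $H$ is convex with $H(1)=0$, we have $H\ge 0$ on $[0,\infty)$ if we adjust by the mass term, or more precisely $H(\vr)\ge -c_0\vr - c_1$ for suitable constants (because $H$ is bounded below by an affine function by convexity); the term $-c_0\vr$ is controlled by $M_0$ via \eqref{masscons}. Since $H''(\overline\vr)\ge 0$ and $\vr_\sigma^{n,\rm up}\ge 0$, all four bracketed dissipation terms in \eqref{denergyinequality} are non-negative. Hence, dropping the non-negative space/time dissipation terms and moving the initial data to the right-hand side, we obtain for every $m$
\[
\sum_{K\in{\cal T}}|K|\Big(\tfrac12\vr_K^m|\bu_K^m|^2 + H(\vr_K^m)\Big) + \deltat\sum_{n=1}^m\sum_{K\in{\cal T}}\Big(\mu\int_K|\Grad\bu^n|^2\dx + (\mu+\lambda)\int_K|{\rm div}\bu^n|^2\dx\Big)\le c(M_0,E_0).
\]
Here I use $\mu>0$ and $\lambda+\tfrac2d\mu\ge0$ so that the viscous term is coercive; a standard algebraic identity shows $\mu\sum_K\int_K|\Grad\bu^n|^2 + (\mu+\lambda)\sum_K\int_K|{\rm div}\bu^n|^2\ge \min(\mu,\mu+\tfrac2d\mu)\,|\bu^n|_{V_h^2}^2\,/$ a dimensional constant, which is nonnegative. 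This immediately gives \eqref{est0} after recalling the definition of the norm in \eqref{notation2}, and \eqref{est2} from the kinetic energy term (noting $\hat\bu^2$ means $\sum_K|\bu_K^n|^2 1_K$, so $\|\vr\hat\bu^2\|_{L^1(\Omega)}=\sum_K|K|\vr_K|\bu_K|^2$).

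\textbf{Step 2 (Sobolev embedding).} Estimate \eqref{est1} follows from \eqref{est0} by the discrete Sobolev inequality for the Crouzeix--Raviart space: $\|v\|_{L^6(\Omega)}\le c(\theta_0)\,|v|_{V_h^2(\Omega)}$ in dimension $d=3$ (this is one of the discrete functional-analysis results collected in the Appendix, Section \ref{3}), applied to each $\bu^n$ and then summed in the $\ell^2$ sense over $n$ with weight $\deltat$.

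\textbf{Step 3 (density bound).} For \eqref{est3}, assumption \eqref{pressure1} gives $p'(\vr)\ge c\,\vr^{\gamma-1}$ for large $\vr$, hence $H(\vr)=\vr\int_1^\vr p(z)z^{-2}dz\ge c_1\vr^\gamma - c_2\vr - c_3$ for constants depending only on $p,\gamma$. Combined with Step 1 and mass conservation \eqref{masscons} (to absorb the linear term), this yields $\|\vr^m\|_{L^\gamma(\Omega)}^\gamma=\sum_K|K||\vr_K^m|^\gamma\le c(M_0,E_0)$, uniformly in $m$, which is \eqref{est3}.

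\textbf{Step 4 (relative energy bound).} For \eqref{est4}, expand the discrete relative energy \eqref{dent}:
\[
{\cal E}(\vr^n,\bu^n|r^n,\bU^n)=\sum_K|K|\Big(\tfrac12\vr_K^n|\bu_K^n-\bU_{h,K}^n|^2 + H(\vr_K^n) - H'(r_K^n)(\vr_K^n-r_K^n) - H(r_K^n)\Big).
\]
Bound the kinetic part by $\le \sum_K|K|\vr_K^n|\bu_K^n|^2 + \sum_K|K|\vr_K^n|\bU_{h,K}^n|^2$; the first term is controlled by \eqref{est2}, and the second by $\|\bU\|_{L^\infty}^2 M_0$ (using $|\bU_{h,K}^n|\le c\|\bU\|_{C^1}$ by stability of the interpolation, a fact from the Appendix, and \eqref{masscons}). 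For the $E(\vr|r)$ part: $H(\vr_K^n)$ is handled by Step 1; the term $-H'(r_K^n)(\vr_K^n - r_K^n) - H(r_K^n)$ is bounded in absolute value by $c(\underline r,\overline r,|p'|)\,(\vr_K^n+1)$ since $\underline r\le r_K^n\le\overline r$ forces $|H'(r_K^n)|$ and $|H(r_K^n)|$ to lie in a fixed compact range; summing over $K$ and using \eqref{masscons} again gives the bound. This establishes \eqref{est4} with the stated dependence of $c$.

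\textbf{Main obstacle.} The computations are all routine; the only point requiring care is the lower bound $H(\vr)\ge c_1\vr^\gamma - c_2\vr - c_3$ in Step 3 when $\gamma$ is close to $1$ and — in the regime where \eqref{pressure2} is invoked — the matching behaviour of $H$ near $\vr=0$, which must be checked so that $H$ is genuinely bounded below. Also, one must make sure all constants genuinely depend only on $M_0$, $E_0$ (and, for \eqref{est4}, additionally on $\underline r$, $\overline r$, and $\|\bU,\nabla\bU\|_{L^\infty}$), and not on $h$ or $\deltat$; this is automatic since Lemma \ref{Theorem3} is an exact identity with $h,\deltat$-independent structure and the discrete Sobolev constant depends only on $\theta_0$.
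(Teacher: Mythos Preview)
Your proof is correct and follows essentially the same route as the paper: drop the (nonnegative) numerical dissipation terms in the energy identity \eqref{denergyinequality}, use the affine lower bound on $H$ together with mass conservation to extract \eqref{est0} and \eqref{est2}, invoke the discrete Sobolev inequality \eqref{sob1} for \eqref{est1}, combine the asymptotic behaviour \eqref{pressure1} of $p$ with the $L^1$-bound on $H(\vr)$ for \eqref{est3}, and finally expand the relative energy and bound each piece for \eqref{est4}. One cosmetic remark: your coercivity discussion in Step~1 is unnecessarily complicated, since assumption \eqref{visc} directly gives $\mu+\lambda\ge \mu(d-2)/d\ge 0$, so the divergence term is nonnegative and the gradient term alone already yields $\mu\,|\bu^n|_{V_h^2}^2$.
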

\begin{proof}
Recall that
\[|\bu|^2_{L^2(0,T;V^2_h(\Omega;\Rm^3)} = \deltat \sum_{n=1}^N\sum_{K\in {\cal T}}\int_K|\Grad\bu^n|^2 \dx;\]
the estimate (\ref{est0}) follows from (\ref{denergyinequality}).
The estimate (\ref{est1}) holds due to imbedding (\ref{sob1}) in Lemma \ref{Lemma2+} and bound (\ref{est0}).
The estimate (\ref{est2}) is just a short transcription of the bound for the kinetic energy in (\ref{denergyinequality}).

{  We prove estimate (\ref{est3}). First, we deduce from (\ref{hypp}) and the definition (\ref{H}) of $H$ that $0\le -H(\vr)\le c_1$ with some $c_1>0$, provided $0<\vr \le 1$ and $H(\vr)> 0$ if $\vr>1$. This fact in combination with the bound for  $\int_\Omega H(\vr){\rm d}x$ derived in (\ref{denergyinequality}) yields
 \begin{equation}\label{H+}
 \int_\Omega |H(\vr)|{\rm d}x\le c<\infty.
\end{equation}
Second, relations (\ref{hypp}--\ref{pressure2}) imply that there are $\overline\vr>1$ and $0<\underline p<\overline p<\infty$ such that
$$
\left\{
\begin{array}{c}
\vr^{\alpha}p_0/2\le\frac {p(\vr)}{\vr^2}\mbox{if $0<\vr<1/\overline\vr$},\\
\underline p\le \frac {p(\vr)}{\vr^2}\le\overline p\;\mbox{if $1/\overline\vr\le\vr\le \overline\vr$},\\
\vr^{\gamma-2} {p_\infty}/2\le \frac {p(\vr)}{\vr^2}\;\mbox{if $\vr>2\overline\vr$}
\end{array}
\right\}.
$$
Using these bounds and the definition (\ref{H}) of $H$ we verify that
$$
\vr^\gamma\le c (|H(\vr)|+ \vr +1)
$$
with a convenient positive constant $c$. Now, bound (\ref{est3}) follows readily
from the boundedness of
$\int_\Omega\vr^m{\rm d}x\equiv\sum_{K\in {\cal T}}|K|\vr_K^m$ and $\int_\Omega H(\vr^m){\rm d}x\equiv\sum_{K\in {\cal T}}{|K|} H(\vr_K^m)$   established in (\ref{masscons}) and
(\ref{denergyinequality}).
}

 Finally, to get (\ref{est4}), we have employed  (\ref{E}), (\ref{dent}), (\ref{masscons}), (\ref{H+})
to estimate $\int_\Omega E(\vr^n|\hat r^n)\dx$ and (\ref{est2}), (\ref{L2-3}), (\ref{L1-3}) to evaluate $\sum_{K\in {\cal T}}\int_K\vr_K^n|\bU_{h,K}^n-\bu_K^n|^2\dx$.
\end{proof}

The following estimates are obtained thanks to the numerical diffusion due to the upwinding, as is classical in the framework of hyperbolic conservation laws, see e.g. \cite{egh-book}.

\begin{lm}[Dissipation estimates on the density]\label{Lemma5}
Let $(\vr^0,\bu^0) \in L_h^{+}(\Omega) \times \bW_h(\Omega)$.
Suppose that $(\vr^n)_{1\le n \le N}\subset [L_h^{+}(\Omega)]^N$, $(\bu^n)_{1\le n \le N} \subset [\bW_h]^N(\Omega)$ is a solution of problem \eqref{scheme}.
Finally assume that the pressure satisfies hypotheses (\ref{hypp}) and
(\ref{pressure1}).
Then we have:
\begin{description}
\item {(1)} If $\gamma\ge 2$ then there exists $c=c(\gamma,\theta_0, E_0)>0$ such that
\begin{equation}\label{dissipative2}
\deltat \sum_{n=1}^N\sum_{\sigma=K|L \in{\cal E}_{\rm int}}|\sigma|\frac{(\vr^n_K-\vr^n_L)^2}{{\rm max}(\vr^n_K,\vr^n_L)} \;|{\bu}^n_\sigma\cdot{\vc n}_{\sigma,K}|\le c.
\end{equation}
\item{(2)} If $\gamma\in [1, 2)$ and the pressure satisfies additionally assumption (\ref{pressure2})
then there exists $c=c(M_0, E_0)>0$ such that
\begin{multline}\label{dissipative1}
\deltat\sum_{n=1}^N \sum_{\sigma=K|L \in{\cal E}_{\rm int}}|\sigma|\frac{(\vr^n_K-\vr^n_L)^2}{[{\rm max}(\vr^n_K,\vr^n_L)]^{2-\gamma}} 1_{\{\overline\vr^n_\sigma\ge 1\}} \;|{\bu}^n_\sigma\cdot{\vc n}_{\sigma,K}|
\\+
\deltat \sum_{n=1}^N\sum_{\sigma=K|L \in{\cal E}_{\rm int}}|\sigma|(\vr^n_K-\vr^n_L)^2 1_{\{\overline\vr^n_\sigma <1\}} \;|\bu^n_\sigma\cdot{\vc n}_{\sigma,K}|\le c,
\end{multline}
where the numbers $\overline\vr^n_\sigma$ are defined in Lemma \ref{Theorem3}.
\end{description}
\end{lm}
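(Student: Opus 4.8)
The plan is to extract these dissipation estimates directly from the energy identity of Lemma~\ref{Theorem3}, exploiting the sign of the numerical diffusion terms $[D^{m,|\Delta\vr|}_{\rm space}]$ defined in \eqref{upwinddissipation_4}. Taking $m=N$ in \eqref{denergyinequality} and discarding all the manifestly non-negative terms (the kinetic and internal energy at time $t_N$, the viscous dissipation, and the other three numerical diffusion terms $[D^{N,|\Delta\bu|}_{\rm time}]$, $[D^{N,|\Delta\vr|}_{\rm time}]$, $[D^{N,|\Delta\bu|}_{\rm space}]$, all of which are non-negative since $H$ is convex so $H''\ge 0$), one obtains
\begin{equation*}
\deltat\sum_{n=1}^N\sum_{\sigma=K|L\in{\cal E}_{\rm int}}|\sigma|H''(\overline\vr^n_\sigma)\frac{(\vr^n_K-\vr^n_L)^2}{2}\,|{\bu}^n_\sigma\cdot{\vc n}_{\sigma,K}|\le E_0,
\end{equation*}
where I have used that the initial energy sum equals $E_0$ (after controlling $-H(\vr^0_K)$ from below by a constant, using $H(\vr)\ge -c$ for $\vr\ge 0$ as in the proof of Corollary~\ref{Corollary1}). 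So the whole matter reduces to bounding $H''(\overline\vr^n_\sigma)$ from below by the appropriate power of $\max(\vr^n_K,\vr^n_L)$.

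The key computation is that $H''(\vr)=p'(\vr)/\vr$, which follows by differentiating the ODE $\vr H'(\vr)-H(\vr)=p(\vr)$ once to get $\vr H''(\vr)=p'(\vr)$. For part~(1), when $\gamma\ge2$, assumption \eqref{pressure1} together with \eqref{hypp} (continuity and positivity of $p'$ on compacts) gives $p'(\vr)\ge c\,\vr^{\gamma-1}$ for all $\vr>0$ with $c=c(\gamma)>0$ — for large $\vr$ from the limit in \eqref{pressure1}, for $\vr$ in a compact away from $0$ by positivity and continuity of $p'$, and for small $\vr$ from $\gamma-1\ge1$ so $\vr^{\gamma-1}$ is itself small; one has to be a little careful near $0$, but since $\gamma\ge 2$ we have $\vr^{\gamma-2}$ bounded on $(0,1]$ and the bound $H''(\vr)=p'(\vr)/\vr\ge c\,\vr^{\gamma-2}\ge c$ near zero, hence $H''(\vr)\ge c/\max(\vr,1)^{2-\gamma}$ which for $\gamma\ge 2$ reads $H''(\vr)\ge c\,\vr^{\gamma-2}\ge c\,(\text{something})/\vr$ — in fact the claimed bound has $\max(\vr^n_K,\vr^n_L)$ in the denominator, so I want $H''(\overline\vr^n_\sigma)\ge c/\max(\vr^n_K,\vr^n_L)$. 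Since $\overline\vr^n_\sigma\le\max(\vr^n_K,\vr^n_L)=:M$ and $H''(\vr)=p'(\vr)/\vr$ with $p'$ nondecreasing for $\gamma\ge2$ — actually I only need $p'(\overline\vr^n_\sigma)\ge c\,(\overline\vr^n_\sigma)^{\gamma-1}$ and then $(\overline\vr^n_\sigma)^{\gamma-1}/\overline\vr^n_\sigma=(\overline\vr^n_\sigma)^{\gamma-2}$; for $\gamma\ge 2$ this is $\ge$ a constant when $\overline\vr^n_\sigma\ge$ a fixed threshold and $\ge c\,M^{\gamma-2}\cdot(\overline\vr^n_\sigma/M)^{\gamma-2}$ in general, and since $(\overline\vr^n_\sigma/M)^{\gamma-2}\ge$ something needs care. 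The cleanest route: prove $H''(\overline\vr^n_\sigma)\ge c/M$ by splitting on whether $M$ is large or bounded, using in the latter case the uniform lower bound of $H''$ on compacts of $(0,\infty)$ — but $H''$ may blow up at $0$, not down, so $H''\ge c$ on $(0,R]$ fails in general; instead $H''(\vr)\vr=p'(\vr)$ so $H''(\vr)\ge c$ for $\vr$ bounded requires $p'(\vr)/\vr\ge c$, i.e. $p'$ bounded below by $c\vr$, true near zero since $p'\ge c\vr^{\gamma-1}$ and $\gamma\ge 2$. So on any $(0,R]$, $H''(\vr)\ge c(R)>0$, and for $\vr\ge R$, $H''(\vr)=p'(\vr)/\vr\ge c\vr^{\gamma-2}\ge c\,\vr^{\gamma-2}\ge c/\vr$ once $\gamma\ge 2$ and $\vr$ large (indeed $\vr^{\gamma-2}\ge 1/\vr \iff \vr^{\gamma-1}\ge 1$). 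Combining, $H''(\overline\vr^n_\sigma)\ge c/\max(1,\overline\vr^n_\sigma)\ge c/\max(1,M)\ge c/(1+M)$; absorbing the $+1$ into the sum via the mass and $L^\gamma$ bounds (or simply noting that $(\vr^n_K-\vr^n_L)^2/(1+M)\le (\vr^n_K-\vr^n_L)^2/M$ up to a constant when $M\ge 1$, and when $M<1$ bounding crudely) yields \eqref{dissipative2}.

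For part~(2), $\gamma\in[1,2)$, the extra hypothesis \eqref{pressure2} is exactly what rescues the small-density regime: $\liminf_{\vr\to0}p'(\vr)/\vr^{\alpha+1}=p_0>0$ with $\alpha\le 0$ gives $p'(\vr)\ge c\,\vr^{\alpha+1}$ near $0$, hence $H''(\vr)=p'(\vr)/\vr\ge c\,\vr^\alpha\ge c$ (since $\alpha\le0$ and $\vr\le1$), so on the event $\{\overline\vr^n_\sigma<1\}$ we get $H''(\overline\vr^n_\sigma)\ge c$, producing the second sum in \eqref{dissipative1}; while on $\{\overline\vr^n_\sigma\ge1\}$, using $p'(\vr)\ge c\vr^{\gamma-1}$ for $\vr\ge1$ (from \eqref{pressure1} and continuity/positivity of $p'$ on $[1,R]$) gives $H''(\overline\vr^n_\sigma)\ge c\,(\overline\vr^n_\sigma)^{\gamma-2}\ge c\,M^{\gamma-2}=c/M^{2-\gamma}$, where the last inequality uses $\gamma-2<0$ and $\overline\vr^n_\sigma\le M$. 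Plugging these two lower bounds into the master inequality above, after multiplying through by $2$, gives \eqref{dissipative1}. The only genuinely delicate point — and the one I would flag as the main obstacle — is the careful patching of the lower bound $p'(\vr)\ge c\,\vr^{\beta}$ across the three density ranges (near $0$, on a compact, near $\infty$) so that the resulting lower bound on $H''$ is uniform in $n$ and $\sigma$ with constants depending only on $\gamma$, $\alpha$, $p_0$, $p_\infty$ and $\theta_0$ (the latter entering only through $E_0$ being finite); everything else is bookkeeping on the energy identity.
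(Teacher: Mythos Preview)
Your proof of part~(2) is correct and matches the paper's argument: bound $H''$ from below on the two regimes $\{\overline\vr^n_\sigma\ge 1\}$ and $\{\overline\vr^n_\sigma<1\}$ using \eqref{pressure1} and \eqref{pressure2} respectively, then read the result off the dissipation term \eqref{upwinddissipation_4} in the energy identity.

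Part~(1), however, has a genuine gap. You attempt to prove $H''(\overline\vr^n_\sigma)\ge c/\max(\vr^n_K,\vr^n_L)$ using only \eqref{hypp} and \eqref{pressure1}, but this inequality is false in general. Take $p(\vr)=\vr^\gamma$ with $\gamma>2$: then $H''(\vr)=p'(\vr)/\vr=\gamma\vr^{\gamma-2}\to 0$ as $\vr\to 0$, so on a face where both $\vr^n_K$ and $\vr^n_L$ are small, $H''(\overline\vr^n_\sigma)\cdot\max(\vr^n_K,\vr^n_L)\approx \gamma M^{\gamma-1}\to 0$, not $\ge c$. Your claims ``$H''(\vr)\ge c$ near zero'' and ``on any $(0,R]$, $H''(\vr)\ge c(R)>0$'' are simply wrong for $\gamma>2$; and the bound $p'(\vr)\ge c\vr^{\gamma-1}$ near zero is not implied by the hypotheses of part~(1) (that is exactly what \eqref{pressure2} would give, but \eqref{pressure2} is \emph{not} assumed here). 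So the energy-identity route cannot produce \eqref{dissipative2} without an extra small-density hypothesis.

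The paper uses a completely different argument for part~(1): it multiplies the discrete continuity equation \eqref{dcont} by $\ln\vr^n_K$ and sums over $K$. Convexity of $\vr\mapsto\vr\ln\vr-\vr$ handles the time-derivative term, and a Taylor-remainder identity (applied with $\psi=\ln$, $\varphi(s)=s$, so $s\psi'(s)=\varphi'(s)$) converts the upwind flux contribution on each face into $\frac12(\vr^n_K-\vr^n_L)^2/\tilde\vr$ with $\tilde\vr$ between $\vr^n_K$ and $\vr^n_L$, hence $\ge (\vr^n_K-\vr^n_L)^2/(2\max(\vr^n_K,\vr^n_L))$. The resulting right-hand side is controlled by $\|\vr^n\|_{L^\gamma}$ (for the $\vr\ln\vr$ terms) and by $\|\vr^n\|_{L^2}\,|\bu^n|_{V_h^2}$ (for the term $-\int_\Omega\vr^n\dv\bu^n$); the latter is where the restriction $\gamma\ge 2$ enters, via \eqref{est3}. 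This renormalization trick sidesteps the small-density issue entirely.
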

\begin{proof} We start by proving the simpler statement \textit{ (2)}.
Taking into account the continuity of the pressure, we deduce from assumptions (\ref{pressure1}) and (\ref{pressure2}) that there exist numbers $\overline p_0>0$, $\overline p_\infty>0$ such that
\[
H''(s)\ge \begin{cases}
 \frac{\overline p_\infty}{s^{2-\gamma}},\quad\mbox{if } s\ge 1,
\\  \overline p_0 s^\alpha\ge
\overline p_0, \quad\mbox{if }s< 1,.
          \end{cases}
\]
whence, splitting the sum in the definition of the term $[D^{N,\Delta\vr}_{\rm space}]$ (see (\ref{upwinddissipation_4})) into two sums, where $(\sigma, n)$ satisfies $\overline\vr_\sigma^n\ge 1$ for the first one and $\overline\vr_\sigma^n< 1$ for the second, we obtain the desired  result.

Let us now turn to the proof of  statement \textit{(1)}.
Multiplying the discrete continuity equation (\ref{dcont})$_K^n$ by $\ln \vr_K^n$ and summing over $K\in {\cal T}$, we get
\begin{equation*}
\sth |K| \frac{ \vr_{K}^n - \vr_{K}^{n-1}}{\deltat}\ln\vr_K^n  + \sth \sum_{\sigma\in {\cal E}(K),\sigma=K|L}(\ln\vr_K^n)\vr_\sigma^{n,{\rm up}} \bu_\sigma^n\cdot\bn_{\sigma,K}=0.
\end{equation*}
By virtue of  the convexity of the function $\vr\mapsto \vr\ln\vr-\vr$ on the positive real line, and due to the Taylor formula, we have
\[
\vr_K^n\ln \vr_K^n-\vr_K^{n-1}\ln \vr_K^{n-1} -( \vr_K^n-\vr_K^{n-1})\le \ln \vr_K^n (\vr_K^n-\vr_K^{n-1});
\]
whence, thanks to the mass conservation \eqref{masscons} and the definition of $\vr_\sigma^{{\rm up}}$, we arrive at
\begin{multline*}
\sth |K| \frac{ \vr_{K}^n\ln\vr_K^n- \vr_{K}^{n-1}\ln\vr_K^{n-1}}{\deltat}
+ \stkl |\sigma| \vr_K^n [\bu_\sigma^n \cdot \bn_{\sigma,K}]^+ \Big( \ln\vr_K^n-\ln\vr_L^n\Big)
\\+ \stkl |\sigma| \vr_L^n [\bu_\sigma^n \cdot \bn_{\sigma,L}]^+ \Big( \ln\vr_L^n-\ln\vr_K^n\Big)\le 0,
\end{multline*}
or equivalently
\begin{multline}\label{est5}
\deltat\stkl |\sigma|  [\bu_\sigma^n \cdot \bn_{\sigma,K}]^+ \Big( \vr_K^n(\ln\vr_K^n-\ln\vr_L^n) -(\vr_K^n-\vr_L^n)\Big)
+\deltat\stkl |\sigma| [\bu_\sigma^n \cdot \bn_{\sigma,L}]^+ \Big( \vr_L^n ( \ln\vr_L^n-\ln\vr_K^n)-(\vr_L^n-\vr_K^n)\Big)\le
\\-\sth |K| \Big({ \vr_{K}^n\ln\vr_K^n- \vr_{K}^{n-1}\ln\vr_K^{n-1}}\Big)
 +\deltat\stkl |\sigma| \Big([\bu_\sigma^n \cdot \bn_{\sigma,K}]^+ (\vr_L^n-\vr_K^n))+  [\bu_\sigma^n \cdot \bn_{\sigma,L}]^+ (\vr_K^n-\vr_L^n))\Big).
\end{multline}
From \cite[Lemma C.5]{FettahGallouet2013Stokes}, we know that if $\varphi$ and $\psi$ are  functions in $C^1((0,\infty);\R)$ such that $s\psi'(s)=\varphi'(s)$ for all $ s \in (0,\infty)$, then for any $(a,b)\in (0,\infty)^2$ there exits $c \in [a,b]$ such that
\[
		(\psi(b)-\psi(a))b -(\varphi(b)-\varphi(a)) = \frac{1}{2}(b-a)^2 \psi'(c).
\]
Applying this result with $\psi(s)=\ln s$, $\varphi(s)=s$ we obtain that the left hand side of \eqref{est5} is greater or equal to
\[
\deltat\stkl |\sigma|  \Big([\bu_\sigma^n \cdot \bn_{\sigma,K}]^+ + [\bu_\sigma^n \cdot \bn_{\sigma,L}]^+\Big)
\frac{(\vr^n_K-\vr^n_L)^2}{\max(\vr^n_K,\vr^n_L)}.
\]
On the other hand, the first term at the right hand side is bounded from above by  $\|\vr^n\|_{L^\gamma(\Omega)}^\gamma$.
 Finally the second term at the right hand side is equal to
\[
-\deltat\sum_{K\in {\cal T}}\int_K\vr_K^n{\rm div}\bu^n {\le \deltat\sum_{K\in {\cal T}}\|\vr_K\|_{L^2(K)}
\|{\rm div}\vc u^n\|_{L^2(K)},}
\]
whence bounded from above by $\deltat\|\bu^n\|_{V_h^2(\Omega;\Rm^3)}\|\vr^n\|_{L^2(\Omega)}$, { where we have used the H\"older inequality and the definition of the $V_h^2(\Omega)$-norm}. The statement \textit{(1)} of  Lemma \ref{Lemma5} now follows from the estimates of Corollary \ref{Corollary1}.

\end{proof}

%
%
\section{Exact relative energy inequality for the discrete problem}\label{5}

The goal of this section is to prove  the discrete version of the relative energy inequality.

\begin{Theorem}\label{Theorem4} 

Suppose that $\Omega\subset \R^3$ is a polyhedral domain and  ${\cal T}$ its regular triangulation { introduced in Section \ref{3.1}}. Let $p$ satisfy hypotheses (\ref{hypp}) and the viscosity coefficient $\mu$, $\lambda$ obey (\ref{visc}).
Let $(\vr^0,\bu^0) \in L_h^{+}(\Omega) \times \bW_h(\Omega)$ and suppose that $(\vr^n)_{1\le n \le N}\in [L_h^{+}(\Omega)]^N$, $(\bu^n)_{1\le n \le N} \in [\bW_h(\Omega)]^N$ is a solution of the discrete problem \eqref{scheme}.
Then there holds for all $m=1,\ldots,N$,
\begin{equation}\label{drelativeenergy}
\begin{aligned}
&\sum_{K\in {\cal T}}\frac 12{|K|}\Big(\vr^{ m}_K|{\bu}^m_K-{\bU}^m_{h,K}|^2-\vr^{0}_K|{\bu}^{0}_K-{\bU}^{0}_{h,K}|^2\Big)
+ \sum_{K\in {\cal T}}{|K|}\Big(E(\vr_K^m| r_K^m)-E(\vr_K^{0}| r_K^{0})\Big)
\\ &\phantom{\sum_{K\in {\cal T}}}\qquad+\deltat\sum_{n=1}^m\sum_{K\in {\cal T}}\Big(\mu\int_K|\Grad(\bu^n-\bU^n_h)|^2 \dx+( \mu +\lambda)\int_K|{\rm div}(\bu^n-\bU^n_h)|^2 \dx\Big)
\\&\phantom{\sum_{K\in {\cal T}}} \le\deltat\sum_{n=1}^m\sum_{K\in {\cal T}}\Big(\mu\int_K\Grad\bU^n_h:\Grad(\bU^n_h-\bu^n) \dx+(\mu +\lambda)\int_K{\rm div}\bU^n_h{\rm div}(\bU^n_h-\bu^n) \dx\Big)
\\&\phantom{\sum_{K\in {\cal T}}}\qquad+\deltat\sum_{n=1}^m\sum_{K\in{\cal T}}{|K|}\vr_K^{n-1}\frac{{\bU}_{h,K}^{n}-{\bU}_{h,K}^{n-1}}{\deltat}\cdot \Big(\frac{{\bU}_{h,K}^{n-1} + {\bU}_{h,K}^{n} }2  -
\bu_K^{n-1}\Big)
\\
&\phantom{\sum_{K\in {\cal T}}}\qquad-\deltat\sum_{n=1}^m\sum_{K\in{\cal T}}\stik|\sigma|\vr_\sigma^{n,{\rm up}}\Big(\frac{\bU^n_{h,K}+\bU^n_{h,L}}2-
\hat{\bu}_{\sigma}^{n,{\rm up}}\Big)\cdot{\bU}^n_{h,K} [\bu^n_\sigma\cdot\bn_{\sigma,K}]
\\
&\phantom{\sum_{K\in {\cal T}}}\qquad-\deltat\sum_{n=1}^m\sum_{K\in {\cal T}}\stik |\sigma|p(\vr^n_K)[{\bU}_{h,\sigma}^{n}\cdot\bn_{\sigma,K}]
\\&\phantom{\sum_{K\in {\cal T}}}\qquad+
\deltat\sum_{n=1}^m\sum_{K\in {\cal T}}\frac{|K|}{\deltat} ( r^n_K-\vr^n_K)\Big(H'( r^n_K)-H'( r^{n-1}_K)\Big)
\\
&\phantom{\sum_{K\in {\cal T}}}\qquad+\deltat\sum_{n=1}^m\sum_{K\in {\cal T}}\stik |\sigma|\vr_\sigma^{n,{\rm up}}H'( r_K^{ n-1})[\bu^n_\sigma\cdot\bn_{\sigma,K}],
\end{aligned}
\end{equation}
for any $ 0<r\in C^1([0,T]\times\overline\Omega)$, $\bU\in C^1([0,T]\times\overline\Omega)$, $\bU|_{\partial\Omega}=0$, { where we have used notation (\ref{notation2-}) for $r^n$, $\bU^n$ { and (\ref{vhat}--\ref{vtilde}) for $\vc U^n_h$, $\vc U^n_{h,K}$, $r^n_K$, $\vc u^n_{\sigma}$.}}
\end{Theorem}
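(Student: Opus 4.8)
The plan is to mimic the continuous derivation of the relative energy inequality \eqref{p5}, but working entirely at the discrete level, starting from the energy identity of Lemma \ref{Theorem3}. The relative energy functional \eqref{dent} differs from the raw energy in Lemma \ref{Theorem3} by ``cross terms'' involving the test pair $(r,\bU)$, so the idea is to expand
\[
\tfrac12\vr^n_K|\bu^n_K-\bU^n_{h,K}|^2 = \tfrac12\vr^n_K|\bu^n_K|^2 - \vr^n_K\bu^n_K\cdot\bU^n_{h,K} + \tfrac12\vr^n_K|\bU^n_{h,K}|^2,
\]
and
\[
E(\vr^n_K|r^n_K) = H(\vr^n_K) - H'(r^n_K)(\vr^n_K-r^n_K) - H(r^n_K),
\]
and then produce a discrete evolution identity for each of the extra pieces. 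Concretely, I would (i) take the already-established energy identity \eqref{denergyinequality} as the contribution of the $\tfrac12\vr|\bu|^2 + H(\vr)$ terms (keeping in mind the nonnegativity of the four numerical dissipation terms $[D^{m,\cdot}_{\cdot}]$, which will be discarded to turn the identity into an inequality); (ii) test the discrete momentum equation \eqref{dmom} with $\bv=\bU^n_h\in\bW_h(\Omega)$ to get an evolution identity for $\sum_K|K|\vr^n_K\bu^n_K\cdot\bU^n_{h,K}$, which furnishes the viscous cross-terms, the convective cross-term (the $\hat\bu^{\rm up}_\sigma$ line), and the $p(\vr^n_K)[\bU^n_{h,\sigma}\cdot\bn_{\sigma,K}]$ line; (iii) multiply the discrete continuity equation \eqref{dcont}$^n_K$ by $\tfrac12|\bU^n_{h,K}|^2$ and sum over $K$, to handle $\sum_K|K|\vr^n_K|\bU^n_{h,K}|^2$; and (iv) multiply \eqref{dcont}$^n_K$ by $-H'(r^{n-1}_K)$ and sum over $K$, to produce the $\vr_\sigma^{n,{\rm up}}H'(r^{n-1}_K)[\bu^n_\sigma\cdot\bn_{\sigma,K}]$ line together with the ``pressure-of-$r$'' time term $\frac{|K|}{\deltat}(r^n_K-\vr^n_K)(H'(r^n_K)-H'(r^{n-1}_K))$.

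The bookkeeping is where the real work lies, and it is almost entirely in the discrete time-derivative terms, because at the discrete level the product rule is only approximate. For the kinetic cross-term, testing \eqref{dmom} with $\bU^n_h$ gives $\sum_K\frac{|K|}{\deltat}(\vr^n_K\bu^n_K-\vr^{n-1}_K\bu^{n-1}_K)\cdot\bU^n_{h,K}$, and the point is to rewrite this, using the discrete continuity equation once more to eliminate $(\vr^n_K-\vr^{n-1}_K)$, as a clean telescoping part $\sum_K\frac{|K|}{\deltat}(\vr^n_K\bu^n_K\cdot\bU^n_{h,K}-\vr^{n-1}_K\bu^{n-1}_K\cdot\bU^{n-1}_{h,K})$ plus the remainder
\[
\sum_K|K|\,\vr^{n-1}_K\,\frac{\bU^n_{h,K}-\bU^{n-1}_{h,K}}{\deltat}\cdot\Big(\frac{\bU^{n-1}_{h,K}+\bU^n_{h,K}}{2}-\bu^{n-1}_K\Big)
\]
(after also using the identity $a\cdot(a-b)=\tfrac12|a|^2-\tfrac12|b|^2+\tfrac12|a-b|^2$ on suitable increments and discarding a nonnegative $|\Delta\bU_h|^2$ square). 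A completely analogous discrete Taylor/telescoping manipulation on the $H'(r)$ terms, exactly as in the proof of Lemma \ref{Theorem3} (Step 1, the Taylor formula for $H$), produces the $(r^n_K-\vr^n_K)(H'(r^n_K)-H'(r^{n-1}_K))$ line. Throughout, one must be careful that $\bU^n_h$ is the \emph{finite-element interpolant} $\sum_\sigma\bU^n_\sigma\varphi_\sigma$ of the strong solution, so that $\bU^n_{h,K}=\frac1{|K|}\int_K\bU^n_h$ and $\bU^n_{h,\sigma}=\bU^n_\sigma$, and that the upwind convection terms are reorganized face-by-face exactly as in Steps 2--3 of the proof of Lemma \ref{Theorem3}.

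The main obstacle — really the only genuinely delicate point — is organizing the convective and viscous cross-terms so that they appear in the precise ``mixed'' form written in \eqref{drelativeenergy}: the viscous terms must collapse into $\mu\int_K\Grad\bU^n_h:\Grad(\bU^n_h-\bu^n)$ after combining the $\mu\int_K|\Grad\bu^n|^2$ piece from \eqref{denergyinequality} with the $\mu\int_K\Grad\bu^n:\Grad\bU^n_h$ piece from testing \eqref{dmom} with $\bU^n_h$ and completing the square $\int_K|\Grad(\bu^n-\bU^n_h)|^2$; and the convective piece $\sum_K\sum_\sigma|\sigma|\vr_\sigma^{n,{\rm up}}\hat\bu^{n,{\rm up}}_\sigma\cdot\bU^n_{h,K}[\bu^n_\sigma\cdot\bn_{\sigma,K}]$ coming from \eqref{dmom} must be combined with the $\tfrac12|\bU^n_{h,K}|^2$-weighted continuity sum from step (iii) and rearranged, using $\bn_{\sigma,K}=-\bn_{\sigma,L}$ and the upwind splitting, into the single line $-\sum_K\sum_\sigma|\sigma|\vr_\sigma^{n,{\rm up}}\big(\tfrac{\bU^n_{h,K}+\bU^n_{h,L}}{2}-\hat\bu^{n,{\rm up}}_\sigma\big)\cdot\bU^n_{h,K}[\bu^n_\sigma\cdot\bn_{\sigma,K}]$; this last regrouping is the discrete analogue of an integration by parts in the term $\int\vr\bu\otimes\bu:\nabla\bU$ and requires the same kind of careful face-by-face summation as in Step 2 of Lemma \ref{Theorem3}, now with a test velocity that is not $\bu^n$. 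Once all four auxiliary identities are summed, multiplied by $\deltat$, summed over $n=1,\ldots,m$, and the nonnegative numerical dissipation squares are dropped, the inequality \eqref{drelativeenergy} follows.
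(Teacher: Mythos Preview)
Your strategy is essentially the paper's: start from the energy identity of Lemma~\ref{Theorem3}, drop the four nonnegative numerical dissipation terms, then add the three auxiliary identities obtained by (ii) testing \eqref{dmom} with $-\bU^n_h$, (iii) multiplying \eqref{dcont} by $\tfrac12|\bU^n_{h,K}|^2$, and (iv) multiplying \eqref{dcont} by $-H'(r^{n-1}_K)$; the combinations you describe (viscous square-completion, face-by-face regrouping of the convective cross term, telescoping of the kinetic cross term) match the paper's Steps~1--5 exactly.

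Two small corrections. First, the kinetic time-derivative step is an \emph{exact} algebraic identity: combining $I_1$, $I_4$ (from (iii)), and $I_5$ (from (ii)) gives precisely
\[
\sum_K\tfrac{|K|}{2\deltat}\Big(\vr^n_K|\bu^n_K-\bU^n_{h,K}|^2-\vr^{n-1}_K|\bu^{n-1}_K-\bU^{n-1}_{h,K}|^2\Big)
-\sum_K|K|\vr^{n-1}_K\frac{\bU^n_{h,K}-\bU^{n-1}_{h,K}}{\deltat}\cdot\Big(\frac{\bU^{n-1}_{h,K}+\bU^n_{h,K}}{2}-\bu^{n-1}_K\Big),
\]
with no $|\Delta\bU_h|^2$ square to discard. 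Second, and relatedly, the \emph{other} source of the inequality sign---beyond the four upwind dissipation terms---is one you did not name: to assemble the $E(\vr^n_K|r^n_K)$ increment you must also add the term $I_7=\sum_K\frac{|K|}{\deltat}\big[(r^n_KH'(r^n_K)-H(r^n_K))-(r^{n-1}_KH'(r^{n-1}_K)-H(r^{n-1}_K))\big]$ and use the convexity of $H$ to bound it by $J_7=\sum_K\frac{|K|}{\deltat}r^n_K(H'(r^n_K)-H'(r^{n-1}_K))$; this is what produces the $r^n_K$ part of the $(r^n_K-\vr^n_K)(H'(r^n_K)-H'(r^{n-1}_K))$ line. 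Your step (iv) alone only supplies the $-\vr^n_K$ part.
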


We notice, comparing the terms in the ``discrete'' formula (\ref{drelativeenergy}) with the terms in the ``continuous''
formula (\ref{p5}), that Theorem \ref{Theorem4} represents a discrete counterpart of the ``continuous'' relative energy inequality  (\ref{p5}).
The rest of this section is devoted to its proof.
To this end, we shall follow the proof of the ``continuous'' relative energy inequality (see \cite{FeJiNo} and \cite{FENOSU}) and adapt it to the discrete case.

\begin{proof}
First, noting that the  numerical diffusion
{ represented by terms  (\ref{upwinddissipation_1}--\ref{upwinddissipation_4})}
in the energy identity (\ref{denergyinequality}) is positive, we infer
\begin{equation}\label{dentropy0}
I_1+I_2+I_3\le 0,
\end{equation}
with
\begin{align*}
&I_1 := \sum_{K\in {\cal T}}\frac 12\frac{|K|}{\deltat}\Big(\vr^n_K|{\bu}^n_K|^2-\vr^{n-1}_K|{\bu}^{n-1}_K|^2\Big), \qquad I_2  :=   \sum_{K\in {\cal T}}\frac{|K|}{\deltat}\Big(H(\vr_K^n)-H(\vr_K^{n-1})\Big), \\
&I_3  :=  \sum_{K\in {\cal T}}\Big(\mu\int_K|\Grad\bu^n|^2 \dx+(\mu +\lambda)\int_K|{\rm div}\bu^n|^2 \dx\Big).
\end{align*}

Next, we multiply the discrete continuity equation (\ref{dcont})$^n_K$ by $\frac 12 |{ \vc U^n_{h,K}}|^2$ and sum over $K\in{\cal T}$ to obtain
\begin{equation}
 I_4   := \sum_{K\in {\cal T}}\frac 12\frac{|K|}{\deltat} ({ \vr_{K}^n - \vr_{K}^{n-1}})|\bU^n_{h,K}|^2
  = -\sum_{K\in {\cal T}}\stik\frac 12 |\sigma| \vr_\sigma^{n,\rm up}[{\bu_\sigma^n}\cdot\bn_{\sigma,K}]|\bU^n_{h,K}|^2:=J_1
\label{dentropy2}
\end{equation}
In the next step,  taking  $-\bU^n$  as test function in the discrete momentum equation (\ref{dmom}); we get
\[
	I_5 =-\sum_{K\in{\cal T}}\frac{|K|}{\deltat} \Big({\vr^n_K{{\bu}}^n_{K}-\vr^{n-1}_K{{\bu}}^{n-1}_{K}}\Big)\cdot\bU_{h,K}^n =J_2+J_3+J_4,
\]
with
\begin{align*}
& J_2	  =\sum_{K\in {\cal T}}\stik |\sigma|\vr^{n,{\rm up}}_\sigma { \hat{\bu}}_{\sigma}^{n,{\rm up}}\cdot\bU^n_{h,K}\,[\bu^n_\sigma\cdot\vc 	n_{\sigma,K}],\\
& { J_3= \mu\sum_{K\in{\cal T}}\int_K\nabla\bu^n:\nabla\bU_h^n \dx+  (\mu+\lambda)\sum_{K\in{\cal T}}\int_K{\rm div}{\bu^n}{\rm div}{{\bU}_h^n} \dx }\\
&\mbox{ and }\\
& J_4=  -\sum_{K\in {\cal T}}\stik |\sigma|p(\vr^n_K)[\bU^n_{\sigma}\cdot{\vc n}_{\sigma,K}].
 \end{align*}
We then multiply the continuity equation (\ref{dcont})$^n_K$ by $H'(r_K^{n-1})$ and sum over all $K\in {\cal T}$ and obtain
\[
	-\sum_{K\in {\cal T}}\frac{|K|}{\deltat} ({ \vr_{K}^n - \vr_{K}^{n-1}}) H'(r^{n-1}_K)
	=\sum_{K\in {\cal T}}\stik |\sigma| \vr_\sigma^{n,\rm up}[{\bu_\sigma^n}\cdot\bn_{\sigma,K}] H'(r^{n-1}_K).
\]
Observing that $
	\vr_K^n H'(r_K^n)-\vr_K^{n-1}H'(r_K^{n-1})= \vr_K^n\Big(H'(r_K^n)-H'(r_K^{n-1})\Big) + (\vr_K^n-\vr_K^{n-1})H'(r^{n-1}_K),
$
we rewrite the last identity in the form
\begin{equation}\label{dentropy3}
\begin{aligned}
& I_6:=-\sum_{K\in {\cal T}}\frac{|K|}{\deltat} \Big(\vr_K^n H'(r_K^n)-\vr_K^{n-1}H'(r_K^{n-1})\Big)= J_5+J_6 \\
&\mbox{with }{ J_5 =-\sum_{K\in {\cal T}}\frac{|K|}{\deltat}\vr_K^n \Big(H'(r_K^n)-H'(r_K^{n-1})\Big)}  \mbox{ and } J_6 = \sum_{K\in {\cal T}}\stik |\sigma| \vr_\sigma^{n,\rm up}[{\bu_\sigma^n}\cdot\bn_{\sigma,K}] H'(r^{n-1}_K).
\end{aligned}
\end{equation}
Finally, thanks to { the}  convexity of the function $H$, we have
\begin{equation}\label{dentropy4}
\begin{aligned}I_7&:=\sum_{K\in{\cal T}}\frac{|K|}{\deltat}\Big[\Big(r_K^nH'(r_K^n)-H(r^n_K)\Big)-\Big(r_K^{n-1}H'(r_K^{n-1})-H(r^{n-1}_K)\Big)\Big]
\\
&=\sum_{K\in{\cal T}}\frac{|K|}{\deltat}r^n_K\Big(H'(r^n_K)-H'(r^{n-1}_K)\Big)-\sum_{K\in{\cal T}}\frac{|K|}{\deltat}\Big(H(r_K^n) -(r^n_K-r^{n-1}_K)H'(r^{n-1}_K)- H(r^{n-1}_K\Big)
\\
&\le \sum_{K\in{\cal T}}\frac{|K|}{\deltat}r^n_K\Big(H'(r^n_K)-H'(r^{n-1}_K)\Big):=J_7,
\end{aligned}
\end{equation}

Now, we gather the expressions (\ref{dentropy0})-(\ref{dentropy4}); this is performed in several steps.

\vspace{2mm}

{\bf Step 1:} {\it Term $I_1+I_4+I_5$.}
We observe that
\begin{equation*}
\left\{
\begin{aligned}
	& \frac {|{\bU}_{h,K}^n|^2}2(\vr_K^n-\vr_K^{n-1})=\frac{\vr_K^n|{\bU}_{h,K}^n|^2-\vr_K^{n-1}|{\bU}_{h,K}^{n-1}|^2}2+\vr_K^{n-1}\frac {{\bU}_{h,K}^{n-1}+{\bU}_{h,K}^n}2\cdot ({\bU}_{h,K}^{n-1}-{\bU}_{h,K}^n),
	\\
	& -(\vr_K^n\bu_K^n-\vr_K^{n-1} \bu_K^{n-1})\cdot{\bU}_{h,K}^{n}= -(\vr_K^n\bu_K^n\cdot{\bU}_{h,K}^{n}-\vr_K^{n-1} \bu_K^{n-1}\cdot{\bU}_{h,K}^{n-1}) -\vr_K^{n-1}\bu_K^{n-1}\cdot ({\bU}_{h,K}^{n-1}-{\bU}_{h,K}^n).
\end{aligned}
\right.
\end{equation*}
Consequently,
\begin{multline}  \label{term1}
	I_1+I_4+I_5= \sum_{K\in{\cal T}}\frac 12 \frac{|K|}{\deltat}\,\Big(\vr_K^n|\bu_K^n-{\bU}_{h,K}^{n}|^2- \vr_K^{n-1}|\bu_K^{n-1}-{\bU}_{h,K}^{n-1}|^2\Big)  \\
	-\sum_{K\in{\cal T}}{|K|}\vr_K^{n-1}\frac{{\bU}_{h,K}^{n}-{\bU}_{h,K}^{n-1}}{\deltat}\cdot \Big(\frac{{\bU}_{h,K}^{n-1} + {\bU}_{h,K}^{n} }2 - \bu_K^{n-1}\Big)
\end{multline}
\vspace{2mm}\noindent
{\bf Step 2:} {\it Term $J_1+J_2$.}
The contribution of the face $\sigma=K|L$ to $J_1$ reads
\[
-|\sigma|\vr_K^n \frac{\bU_{h,K}^n+\bU_{h,L}^n}2\cdot(\bU_{h,K}^n -\bU_{h,L}^n)\,[\bu^n_\sigma\cdot\bn_{\sigma,K}]^+
-|\sigma|\vr_L^n \frac{\bU_{h,K}^n+\bU_{h,L}^n}2\cdot(\bU_{h,L}^n -\bU_{h,K}^n)\,[\bu^n_\sigma\cdot\bn_{\sigma,L}]^+.
\]
Similarly, the contribution of the face $\sigma=K|L$ to $J_2$ is
\[
|\sigma|\vr_K^n\bu^n_K\cdot(\bU^n_{h,K}-\bU^n_{h,L})[\bu^n_\sigma\cdot\bn_{\sigma,K}]^+
+ |\sigma|\vr_L^n\bu^n_L\cdot(\bU^n_{h,L}-\bU^n_{h,K})[\bu^n_\sigma\cdot\bn_{\sigma,L}]^+.
\]
Consequently,
\begin{equation}\label{term2}
J_1+J_2= -\sum_{K\in{\cal T}}\sum_{\sigma=K|L\in {\cal E}_K}|\sigma|\vr_\sigma^{n,{\rm up}} \Big(\frac{\bU^n_{h,K}+ \bU^n_{h,L}}2- {\hat \bu}_{\sigma}^{n,{\rm up}}\Big)\cdot{\bU}^n_{h,K} [\bu^n_\sigma \cdot\bn_{\sigma,K}].
\end{equation}

\vspace{2mm}\noindent
{\bf Step 3:} {\it Term $I_3-J_3$.}
This term can be written in the form
\begin{equation}\label{term3}
\begin{aligned}
I_3-J_3 &=\sum_{K\in {\cal T}}\Big(\mu\int_K|\Grad(\bu^n-\bU_h^n)|^2 \dx+(\mu +\lambda)\int_K|{\rm div}(\bu^n-\bU_h^n)|^2 \dx\Big)
\\ &\qquad - \sum_{K\in {\cal T}}\mu\int_K\Big(\nabla\bU_h^n:\nabla(\bU_h^n-\bu^n)
+(\mu +\lambda)\int_K{\rm div}\bU_h^n{\rm div}(\bU_h^n-\bu^n)\Big).
\end{aligned}
\end{equation}
{\bf Step 4:} {\it Term $I_2+ I_6 +I_7$.}
By virtue of (\ref{dentropy0}), (\ref{dentropy3}--\ref{dentropy4}), we easily find that
\begin{equation}\label{term4}
I_2+I_6 +I_7=\sum_{K\in{\cal T}}\frac{|K|}{\deltat}\Big( E(\vr^n_K\,|\,r^n_K)-E(\vr^{n-1}_K\,|\,r^{n-1}_K)\Big),
\end{equation}
where the function $E$ is defined in (\ref{E}).

\vspace{2mm}
{\bf Step 5:} {\it Term $J_5+J_6+J_7$.}
Coming back to (\ref{dentropy3}--\ref{dentropy4}),
we deduce that
\begin{equation}\label{term5}
J_5+J_6+J_7=
\sum_{K\in {\cal T}}\frac{|K|}{\deltat}\big(r_K^n-\vr_K^n\Big)\Big(H'(r_K^n)-H'(r_K^{n-1})\Big)
+\sum_{K\in {\cal T}}\stik |\sigma| \vr_\sigma^{n,\rm up}[{\bu_\sigma^n}\cdot\bn_{\sigma,K}] H'(r^{n-1}_K).
\end{equation}

\vspace{2mm}
{\bf Step 6:} {\it Conclusion}

According to (\ref{dentropy0})--(\ref{dentropy4}), we have
\[\sum_{i=1}^7I_i\le\sum_{i=1}^7 J_i; \]
whence, writing this inequality by using expressions (\ref{term1})--(\ref{term5}) calculated in steps 1-5, we get
\begin{equation}\label{drelativeenergy1}
 \begin{aligned}
& \sum_{K\in {\cal T}}\frac12 \frac{|K|}{\deltat} \Big(\vr^n_K|{\bu}^n_K-{\bU}^n_{h,K}|^2 - \vr^{n-1}_K|{\bu}^{n-1}_K - {\bU}^{n-1}_{h,K}|^2\Big) + \sum_{K\in {\cal T}}\frac{|K|}{\deltat}\Big(E(\vr_K^n| r_K^n)-E(\vr_K^{n-1}| r_K^{n-1})\Big)
\\ &\qquad   \qquad    +\sum_{K\in {\cal T}}\Big(\mu\int_K|\Grad(\bu^n-\bU^n_h)|^2 \dx+( \mu +\lambda)\int_K|{\rm div}(\bu^n-\bU^n_h)|^2 \dx\Big)
 \\&
 \qquad  \le \sum_{K\in {\cal T}}\Big(\mu\int_K\Grad\bU^n_h:\Grad(\bU^n_h-\bu^n) \dx+(\mu +\lambda)\int_K{\rm div}\bU^n_h{\rm div}(\bU^n_h-\bu^n) \dx\Big)
\\ &\qquad   \qquad  +\sum_{K\in{\cal T}}{|K|}\vr_K^{n-1}\frac{{\bU}_{h,K}^{n}-{\bU}_{h,K}^{n-1}}{\deltat}\cdot \Big(\frac{{\bU}_{h,K}^{n-1} + {\bU}_{h,K}^{n} }2  -
\bu_K^{n-1}\Big)
\\
&\qquad   \qquad   -\sum_{K\in{\cal T}}\sum_{\sigma=K|L\in {\cal E}_K}|\sigma|\vr_\sigma^{n,{\rm up}}\Big(\frac{\bU^n_{h,K}+\bU^n_{h,L}}2-
{\hat \bu}_{\sigma}^{n,{\rm up}}\Big)\cdot{\bU}^n_{h,K} [\bu^n_\sigma\cdot\bn_{\sigma,K}]
\\
&\qquad   \qquad   -\sum_{K\in {\cal T}}\sum_{\sigma=K|L\in {\cal E}_K}|\sigma|p(\vr^n_K)[{\bU}_{h,\sigma}^{n}\cdot\bn_{\sigma,K}]
+\sum_{K\in {\cal T}}\frac{|K|}{\deltat} ( r^n_K-\vr^n_K)\Big(H'( r^n_K)-H'( r^{n-1}_K)\Big)
\\& \qquad   \qquad  +\sum_{K\in {\cal T}}\sum_{\sigma=K|L\in {\cal E}_K}|\sigma|\vr_\sigma^{n,{\rm up}}H'( r_K^{ n-1})[\bu^n_\sigma\cdot\bn_{\sigma,K}].
\end{aligned}
\end{equation}
We obtain formula (\ref{drelativeenergy}) by summing (\ref{drelativeenergy1})$^n$ from $n=1$ to $n=m$ and multiplying
the resulting inequality by $\deltat$.
\end{proof}

\section{Approximate relative energy inequality for the discrete problem}\label{6}
The exact relative energy inequality as stated in Section \ref{5} is a general inequality for the given numerical scheme, however it does not immediately provide a comparison of the { approximate} solution with the strong solution of the compressible Navier-Stokes equations.
Its right hand side has to be conveniently transformed (modulo the possible appearance of residual terms vanishing as the space and time steps tend to $0$) to provide such comparison tool via a Gronwall type argument.

The goal of this section is to derive a version of the discrete relative energy inequality, still with arbitrary (sufficiently regular) test functions $(r,\bU)$, that will be convenient for the comparison of the discrete solution with the strong solution.

\begin{lm}[Approximate relative energy inequality]\label{refrelenergy}\label{6.6}
Suppose that $\Omega\subset \R^3$ is a bounded polyhedral domain and  ${\cal T}$ its regular triangulation { introduced in Section \ref{3.1}}.
Let the pressure $p$ be a $C^2(0,\infty)$ function satisfying hypotheses (\ref{hypp}), (\ref{pressure1}) with
$\gamma\ge 3/2$ and satisfying the additional condition  (\ref{pressure2})  if $\gamma<2$.

Let $(\vr^0,\bu^0) \in L_h^{+}(\Omega) \times \bW_h(\Omega)$ and suppose that $(\vr^n)_{1\le n \le N}\in [L_h^{+}(\Omega)]^N$, $(\bu^n)_{1\le n \le N} \in [\bW_h(\Omega)]^N$ is a solution of the discrete problem \eqref{scheme} with the viscosity coefficients $\mu$, $\lambda$ obeying (\ref{visc}).

Then
there exists
\[
c=c( M_0,E_0,\underline r, \overline r, |p'|_{{C^1}[\underline r,\overline r]}, \|(\partial_t r,\partial_t^2 r, \nabla r, \partial_t\nabla r, \bU,
\partial_t\bU, \nabla\bU, \partial_t\nabla\bU)\|_{L^\infty(Q_T;\Rm^{31})}
)>0
\]
(where  $\overline r=\max_{(t,x)\in \overline {Q_T}} r(t,x)$, $\underline r=\min_{(t,x)\in \overline {Q_T}} r(t,x)$),
such that for all $m=1,\ldots,N$, we have:
\begin{equation}\label{relativeenergy-}
\begin{aligned}
	&{\cal E}(\vr^m,\bu^m\Big| r^m, \bU^m)- {\cal E}(\vr^0,\bu^0\Big|r(0),\bU(0))
	\\& \qquad \qquad \qquad +\deltat \sum_{n=1}^m\sum_{K\in {\cal T}}\Big(\mu\int_K|\Grad(\bu^n-\bU^n_h)|^2 \dx+( \mu +\lambda)\int_K|{\rm div}(\bu^n-\bU^n_h)|^2 \dx\Big)
	\\& \qquad \le \deltat \sum_{n=1}^m\sum_{K\in {\cal T}}\Big(\mu\int_K\Grad\bU^n_h:\Grad(\bU^n_h-\bu^n) \dx+(\mu +\lambda)\int_K{\rm div}\bU^n_h{\rm div}(\bU^n_h-\bu^n) \dx\Big)
	\\& \qquad+\deltat\sum_{n=1}^m\sum_{K\in{\cal T}}|K|\vr_K^{n-1}\frac{{\bU}_{h,K}^{n}-{\bU}_{h,K}^{n-1}}{\deltat}\cdot \Big({\bU}_{h,K}^{n}   -\bu_K^{n}\Big)
	\\ &\qquad { + \deltat\sum_{n=1}^m\sum_{K\in{\cal T}}\sum_{\sigma\in {\cal E}(K)}|\sigma|\vr_\sigma^{n,{\rm up}}
\Big(\hat{\bU}^{n,{\rm up}}_{h,\sigma}-\hat{\bu}^{n,{\rm up}}_{\sigma}\Big)\cdot\Big(\bU^n_\sigma-{\bU}^n_{h,K}\Big) \hat\bU_{h,\sigma}^{n,{\rm up}}\cdot\bn_{\sigma,K}}
%
	\\& \qquad-\deltat \sum_{n=1}^m\sum_{K\in {\cal T}} \int_K p(\vr_K^n)\dv\bU^n\dx
	+\deltat \sum_{n=1}^m\sum_{K\in {\cal T}}\int_K ( r^n_K-\vr^n_K)\frac{p'(r_K^n)}{r_K^n} [\partial_t r]^n\dx
	\\ &\qquad -\deltat\sum_{n=1}^m\sum_{K \in {\cal T}}\int_K\frac{\vr^n_K}{r^n_K} p'(r^n_K) { \bu^n}\cdot\nabla r^n\dx	
\, { + R^m_{h,\deltat}} { + G^m}
\end{aligned}
\end{equation}
for  any pair $(r,\bU)$ belonging to the class (\ref{dr,U}), where
\begin{equation}\label{A1}
{ |G^m|\le c\,\deltat\sum_{n=1}^m{\cal E}(\vr^n,\bu^n\Big| r^n, U^n),}\;\;
{ |R^m_{h,\deltat}|\le c (\sqrt{\deltat} + h^A)},\quad \mbox{ and } A=\left\{\begin{array}{c}
{ \frac{2\gamma-3}{\gamma}}\;\mbox{if $\gamma\in [3/2,2)$}
\\
1/2 \;\mbox{ if { $\gamma\ge 2$}},
\end{array}
\right.
\end{equation}
 { and where we have  used notation (\ref{notation2-}) for $r^n$, $\bU^n$  and (\ref{vhat}--\ref{vtilde}) for $\vc U^n_h$, $\vc U^n_{h,K}$, $r^n_K$, $\vc u^n_{\sigma}$.}
\end{lm}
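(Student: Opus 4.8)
We propose to prove Lemma~\ref{refrelenergy} by transforming, term by term, the right-hand side of the exact discrete relative energy inequality (\ref{drelativeenergy}) of Theorem~\ref{Theorem4} into the right-hand side of (\ref{relativeenergy-}), and by sorting each discrepancy either into $R^m_{h,\deltat}$, when it is $O(\sqrt{\deltat}+h^A)$, or into $G^m$, when it is of Gronwall type $c\,\deltat\sum_{n=1}^m{\cal E}(\vr^n,\bu^n| r^n,\bU^n)$. The viscous contributions on the first line of the right-hand side of (\ref{drelativeenergy}) already coincide with those of (\ref{relativeenergy-}), so no work is needed there. The recurring toolbox consists of the Taylor formula for $H$, $H'$ and $p$ together with the identity $H''(s)=p'(s)/s$ (which follows from $sH'(s)-H(s)=p(s)$); the interpolation, stability, inverse and trace estimates for $L_h(\Omega)$ and $V_h(\Omega)$ and the discrete Sobolev embeddings collected in Section~\ref{3}; the a priori bounds of Corollary~\ref{Corollary1}, in particular $\|\vr\|_{L^\infty(0,T;L^\gamma(\Omega))}\le c$, $\|\bu\|_{L^2(0,T;L^6(\Omega;\R^3))}\le c$, the kinetic energy bound and ${\cal E}(\vr^n,\bu^n| r^n,\bU^n)\le c$; the numerical dissipation bounds $[D^{m,|\Delta\bu|}_{\rm time}]\le c$, $[D^{m,|\Delta\bu|}_{\rm space}]\le c$ and $[D^{m,|\Delta\vr|}_{\rm space}]\le c$, which follow from the energy identity (\ref{denergyinequality}); and the sharper density dissipation estimates of Lemma~\ref{Lemma5}.

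I would start with the discrete ``$\partial_t\bU$'' term of (\ref{drelativeenergy}). Writing $\tfrac12(\bU_{h,K}^{n-1}+\bU_{h,K}^{n})-\bu_K^{n-1}=(\bU_{h,K}^{n}-\bu_K^{n})+\tfrac12(\bU_{h,K}^{n-1}-\bU_{h,K}^{n})+(\bu_K^{n}-\bu_K^{n-1})$ splits it into the target term $\deltat\sum_{n=1}^m\sum_K|K|\vr_K^{n-1}\frac{\bU_{h,K}^{n}-\bU_{h,K}^{n-1}}{\deltat}\cdot(\bU_{h,K}^{n}-\bu_K^{n})$ of (\ref{relativeenergy-}), a manifestly nonpositive term $-\tfrac12\sum_{n,K}|K|\vr_K^{n-1}|\bU_{h,K}^{n}-\bU_{h,K}^{n-1}|^2/\deltat$ (harmless for an upper bound), and a term which, by Cauchy--Schwarz, by $|\bU_{h,K}^{n}-\bU_{h,K}^{n-1}|\le c\deltat$ (stability of the interpolation and $\partial_t\bU\in L^\infty$) and by $[D^{m,|\Delta\bu|}_{\rm time}]\le c$, i.e. $\sum_{n,K}|K|\vr_K^{n-1}|\bu_K^{n}-\bu_K^{n-1}|^2\le c$, is $O(\sqrt{\deltat})$. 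The pressure flux term $-\deltat\sum_{n=1}^m\sum_K\stik|\sigma|p(\vr_K^n)[\bU^n_{h,\sigma}\cdot\bn_{\sigma,K}]$ equals $-\deltat\sum_{n=1}^m\sum_K\int_K p(\vr^n)\dv\bU^n_h\dx$ because $\bU^n_h$ is affine on each cell and because of the nonconforming face constraints; replacing $\bU^n_h$ by $\bU^n$ costs $\|\dv(\bU^n-\bU^n_h)\|_{L^\infty}\le ch$ paired with $\|p(\vr^n)\|_{L^1}\le c$, hence an $O(h)$ remainder, and reproduces the pressure term of (\ref{relativeenergy-}). The two terms of (\ref{drelativeenergy}) involving $H'$ are treated via $H'(r_K^{n})-H'(r_K^{n-1})=H''(r_K^{n})(r_K^{n}-r_K^{n-1})+O(\deltat^2)$, $r_K^{n}-r_K^{n-1}=\deltat[\partial_t r]^n+O(\deltat^2)$, the identity $H''(s)=p'(s)/s$, and the replacement of cell constants by cell integrals ($O(h)$); for the face term $\deltat\sum_{n=1}^m\sum_K\stik|\sigma|\vr_\sigma^{n,{\rm up}}H'(r_K^{n-1})[\bu^n_\sigma\cdot\bn_{\sigma,K}]$ one reorganizes the sum over internal faces $\sigma=K|L$ so that $H'(r_K^{n-1})-H'(r_L^{n-1})$ appears, Taylor-expands $H'\circ r$ in space, absorbs the time shift $r^{n-1}\mapsto r^n$ using the discrete continuity equation (\ref{dcont}) and Abel summation, and controls the mismatch between $\vr_\sigma^{n,{\rm up}}$ and $(\vr_K^n,\vr_L^n)$ through Lemma~\ref{Lemma5}; this reproduces $-\deltat\sum_{n=1}^m\sum_K\int_K\frac{\vr^n_K}{r^n_K}p'(r^n_K)\bu^n\cdot\nabla r^n\dx$ modulo $O(h+\deltat)$ remainders and a Gronwall term.

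The genuinely delicate step, and the one I expect to be the main obstacle, is the discrete upwind convective term $-\deltat\sum_{n=1}^m\sum_K\stik|\sigma|\vr_\sigma^{n,{\rm up}}\bigl(\tfrac12(\bU^n_{h,K}+\bU^n_{h,L})-\hat{\bu}_\sigma^{n,{\rm up}}\bigr)\cdot\bU^n_{h,K}\,[\bu^n_\sigma\cdot\bn_{\sigma,K}]$. Mimicking the continuous-level computation of \cite{FeJiNo}, one reorganizes it over internal faces (which turns the factor $\bU^n_{h,K}$ into the jump $\bU^n_{h,K}-\bU^n_{h,L}$), inserts and subtracts the face means $\bU^n_\sigma$ and the upwind cell averages $\hat{\bU}^{n,{\rm up}}_{h,\sigma}$, and uses $|\bU^n_{h,K}-\bU^n_{h,L}|\le ch$ and $|\bU^n_\sigma-\bU^n_{h,K}|\le ch$ ($\bU\in C^2$, interpolation in $V_h(\Omega)$ and the nonconforming constraint). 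This produces the explicit convective term $\deltat\sum_{n=1}^m\sum_K\stik|\sigma|\vr_\sigma^{n,{\rm up}}(\hat{\bU}^{n,{\rm up}}_{h,\sigma}-\hat{\bu}^{n,{\rm up}}_\sigma)\cdot(\bU^n_\sigma-\bU^n_{h,K})\,\hat{\bU}^{n,{\rm up}}_{h,\sigma}\cdot\bn_{\sigma,K}$ of (\ref{relativeenergy-}), together with remainders of the schematic forms $\deltat\sum_{n,\sigma}|\sigma|\,|\bU^n_{h,K}-\bU^n_{h,L}|\,|\cdots|\,|\bu^n_\sigma\cdot\bn_{\sigma,K}|$ and $\deltat\sum_{n,\sigma}|\sigma|\,|\vr^n_K-\vr^n_L|\,|\cdots|\,|\bu^n_\sigma\cdot\bn_{\sigma,K}|$, which are estimated, after Cauchy--Schwarz, by $[D^{m,|\Delta\bu|}_{\rm space}]\le c$, by the density dissipations of Lemma~\ref{Lemma5}, by $\|\bu\|_{L^2(0,T;L^6)}\le c$ and by the trace inequality on the simplices (the latter supplying a negative power of $h$), and finally remainders carrying a surviving factor $(\hat{\bu}^{n,{\rm up}}_\sigma-\hat{\bU}^{n,{\rm up}}_{h,\sigma})$ or $(\bu^n_K-\bU^n_{h,K})$, which are not small but are controlled via $\sum_K|K|\vr^n_K|\bu^n_K-\bU^n_{h,K}|^2\le 2{\cal E}(\vr^n,\bu^n| r^n,\bU^n)$ and hence feed $G^m$. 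The real difficulties at this step are: preserving the consistency of the upwinding while splitting the numerical fluxes; correctly distributing the single available square root of each dissipation estimate among several residuals, some of which are only linear (not quadratic) in the jumps; and, decisively, closing all the Hölder estimates in the regime where the numerical density is close to $0$ or to $\infty$ using only the $L^\gamma$ bound on $\vr$ (no $L^\infty$ bound being available) together with the weaker density dissipation of Lemma~\ref{Lemma5}(2) when $\gamma<2$. This last point is exactly what forces the restriction $\gamma\ge 3/2$ and pins down the value of $A$ in (\ref{A1}). Gathering all the remainders yields the bounds (\ref{A1}) and completes the proof.
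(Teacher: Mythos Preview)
Your overall strategy---start from the exact discrete relative energy inequality (\ref{drelativeenergy}) and transform each of its six right-hand side terms into the corresponding term of (\ref{relativeenergy-}) plus a residual of order $\sqrt{\deltat}+h^A$ or a Gronwall contribution---is exactly the paper's approach, and your treatment of the time-derivative term $T_2$ and of $T_5$ is essentially identical to theirs.

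A few places where the paper's route differs from yours are worth noting. For the pressure term $T_4$, the paper does not introduce any remainder at all: the divergence-preserving property (\ref{L1-1}) of the Crouzeix--Raviart projection gives $\sum_K\int_K p(\vr^n_K)\dv\bU^n_h\dx=\sum_K\int_K p(\vr^n_K)\dv\bU^n\dx$ directly. For the $H'$-face term $T_6$, the time shift $r^{n-1}\mapsto r^n$ is handled by the elementary bound $|H''(r^n_K)\nabla r^n-H''(r^{n-1}_K)\nabla r^{n-1}|\le c\deltat$; no Abel summation or use of the discrete continuity equation is needed, and no Gronwall term is produced from $T_6$. The density-dissipation Lemma~\ref{Lemma5} enters only in $T_6$ (in the residual $R_{6,1}$, arising when $\vr^{n,{\rm up}}_\sigma$ is replaced by $\vr^n_K$), and this is what yields the $\sqrt{h}$ in the case $\gamma\ge 2$.

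Correspondingly, your analysis of the convective term $T_3$ mislocates the tools. In the paper neither the velocity-space dissipation $[D^{m,|\Delta\bu|}_{\rm space}]$ nor the density dissipation of Lemma~\ref{Lemma5} is used for $T_3$; the upwind density $\vr^{n,{\rm up}}_\sigma$ is kept as a weight throughout and never split into $\vr^n_K-\vr^n_L$. After the first rearrangement (which indeed inserts $\bU^n_\sigma$ and turns the factor $\bU^n_{h,K}$ into $\bU^n_{h,K}-\bU^n_\sigma=O(h)$), the paper performs two further replacements in the \emph{flux} factor: first $\bu^n_\sigma\mapsto\hat\bu^{n,{\rm up}}_\sigma$, estimated by a three-term H\"older inequality with exponents $2,\,2\gamma_0,\,q$ ($\gamma_0=\min(\gamma,2)$) and the interpolation inequalities (\ref{L2+-1})--(\ref{L2+-2}) on the factor $|\bu^n_\sigma-\hat\bu^{n,{\rm up}}_\sigma|$---this is where the exponent $A$ for $\gamma<2$ actually arises; then $\hat\bu^{n,{\rm up}}_\sigma\mapsto\hat\bU^{n,{\rm up}}_{h,\sigma}$, which produces a term quadratic in $\hat\bu^{n,{\rm up}}_\sigma-\hat\bU^{n,{\rm up}}_{h,\sigma}$ multiplied by $|\bU^n_\sigma-\bU^n_{h,K}|\le ch$, and this---the only such term in the whole proof---is the Gronwall contribution $G^m$. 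So the restriction $\gamma\ge 3/2$ and the value of $A$ are fixed by the H\"older/interpolation step in $R_{3,2}$ and $R_{6,3}$ together with the $\sqrt{h}$ from $R_{6,1}$, not by the numerical-dissipation bounds.
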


\begin{proof}
\label{6.0}
The right hand side of the relative energy inequality (\ref{drelativeenergy}) is a sum $\sum_{i=1}^6T_i$, where
\begin{align*}
& T_1=\deltat \sum_{n=1}^m\sum_{K\in {\cal T}}\Big(\mu\int_K\Grad\bU^n_h:\Grad(\bU^n_h-\bu^n) \dx+(\mu +\lambda)\int_K{\rm div}\bU^n_h{\rm div}(\bU^n_h-\bu^n) \dx\Big),
\\ & T_2=\deltat\sum_{n=1}^m\sum_{K\in{\cal T}}{|K|}\vr_K^{n-1}\frac{{\bU}_{h,K}^{n}-{\bU}_{h,K}^{n-1}}{\deltat}\cdot \Big(\frac{{\bU}_{h,K}^{n-1} + {\bU}_{h,K}^{n} }2  -
\bu_K^{n-1}\Big),
\\ & T_3=-\deltat\sum_{n=1}^m\sum_{K\in{\cal T}}\sum_{\sigma=K|L\in {\cal E}(K)}|\sigma|\vr_\sigma^{n,{\rm up}}\Big(\frac{\bU^n_{h,K}+\bU^n_{h,L}}2-
\hat {\bu}_{\sigma}^{n,{\rm up}}\Big)\cdot{\bU}^n_{h,K} [\bu^n_\sigma\cdot\bn_{\sigma,K}],
\\ & T_4 = -\deltat\sum_{n=1}^m\sum_{K\in {\cal T}}\sum_{\sigma=K|L\in {\cal E}(K)}|\sigma|p(\vr_K)[{\bU}_{h,\sigma}^{n}\cdot\bn_{\sigma,K}],
\\ & T_5=
\deltat\sum_{n=1}^m\sum_{K\in {\cal T}}|K| ( r^n_K-\vr^n_K)\frac{H'( r_K^{n})-H'(r^{n-1}_K)}{\deltat},
\\& T_6=\deltat \sum_{n=1}^m \sum_{K\in {\cal T}}\sum_{\sigma=K|L\in {\cal E}(K)}|\sigma|\vr_\sigma^{n,{\rm up}}H'( r_K^{ n-1})[\bu^n_\sigma\cdot\bn_{\sigma,K}].
 \end{align*}

The  term $T_1$ will be kept as it is; all the other terms $T_i$  will be transformed to a more convenient form, as described in the following steps.

\vspace{2mm}\noindent
{\bf Step 1:} {\it Term $T_2$.}
\label{6.1}
We have
\[
T_2=T_{2,1}+ R_{2,1},\mbox{ with }T_{2,1}=\deltat\sum_{n=1}^m\sum_{K\in{\cal T}}|K|\vr_K^{n-1}\frac{{\bU}_{h,K}^{n}-{\bU}_{h,K}^{n-1}}{\deltat}\cdot \Big({\bU}_{h,K
}^{n-1}   -
\bu_K^{n-1}\Big),
\mbox{ and }R_{2,1}=\deltat\sum_{n=1}^m\sum_{K\in{\cal T}}R^{n,K}_{2,1},
\]
where
$$
 R_{2,1}^{n,K}=
\frac {|K|}2
\vr_K^{n-1}\frac{({\bU}_{h,K}^{n}-{\bU}_{h,K}^{n-1})^2}{\deltat}={ \frac {|K|}2
\vr_K^{n-1}\frac{([{\bU}^{n}-{\bU}^{n-1}]_{h,K})^2}{\deltat}}.
$$
We may write by virtue of the first order Taylor formula applied to function $t\mapsto \vc U(t,x)$,
$$
\Big|\frac{[{\bU}^{n}-{\bU}^{n-1}]_{h,K}}{\deltat}\Big|=
\Big|\frac 1{|K|}\int_K\Big[\frac 1\deltat \Big[\int_{t_{n-1}}^{t_n} \partial_t\vc U(z,x) {\rm d} z\Big]_h\Big]{\rm d}x\Big|
$$
$$
=\Big|\frac 1{|K|}\int_K\Big[\frac 1\deltat \int_{t_{n-1}}^{t_n} [\partial_t\vc U(z) \Big]_h(x){\rm d} z\Big]{\rm d}x\Big|\le \|[\partial_t\vc U ]_h\|_{L^\infty(0,T;L^\infty(\Omega;\Rm^3))}\le \|\partial_t\vc U \Big\|_{L^\infty(0,T;L^\infty(\Omega;\Rm^3))},
$$
where we have used the property (\ref{ddd}) of the projection onto the space $V_h{ (\Omega)}$.
Therefore, thanks to the mass conservation \eqref{masscons}, we finally get
\begin{equation}\label{R2.1}
|R^{n,K}_{2,1}|\le\frac {M_0} 2|K|\deltat\|\partial_t\bU\|^2_{L^\infty(0,T; L^{\infty}(\Omega;\Rm^3))}.
\end{equation}

%
Let us now decompose the term  $T_{2,1}$ as
\begin{equation}\label{T2}
T_{2,1}=T_{2,2}+R_{2,2},\mbox{ with }T_{2,2}=\deltat\sum_{n=1}^m\sum_{K\in{\cal T}}|K|\vr_K^{n-1}\frac{{\bU}_{h,K}^{n}-{\bU}_{h,K}^{n-1}}{\deltat}\cdot \Big({\bU}_{h,K
}^{n}   - \bu_K^{n}\Big),
 \mbox{ and } R_{2,2}=\deltat\sum_{n=1}^m R_{2,2}^n,
\end{equation}
where $\displaystyle
	R^n_{2,2}=\sum_{K\in{\cal T}}|K|\vr_K^{n-1}\frac{{\bU}_{h,K}^{n}-{\bU}_{h,K}^{n-1}}{\deltat}\cdot \Big({\bU}_{h,K}^{n-1}   - \bU_{h,K}^{n}\Big)
 	- \sum_{K\in{\cal T}}|K|\vr_K^{n-1}\frac{{\bU}_{h,K}^{n}-{\bU}_{h,K}^{n-1}}{\deltat}\cdot \Big({\bu}_{K}^{n-1}   		 -\bu_{K}^{n}\Big).$

By the same token as above, we may estimate the residual term as follows
\[
|R^n_{2,2}|\le \deltat\; c M_0 \|\partial_t\bU\|^2_{L^\infty(0,T;W^{1,\infty}(\Omega;\Rm^3)}+
c M_0^{1/2}\Big(\sum_{K\in{\cal T}}|K|\vr_K^{n-1}|{\bu}_{K
}^{n-1}   -
\bu_{K}^{n}|^2\Big)^{1/2} \|\partial_t\bU\|_{L^\infty(0,T;L^{\infty}(\Omega;\Rm^3))},
\]
where we have used the H\"older inequality to treat the second term;
whence, by virtue of  estimate (\ref{upwinddissipation_1}),
\begin{equation}\label{R2.2}
|R_{2,2}|\le \sqrt{\deltat} \,c(M_0, E_0, \|(\partial_t\bU, \partial_t\nabla\bU)\|_{L^\infty(Q_T;\Rm^{12})}).
\end{equation}

\vspace{2mm}\noindent
{\bf Step 2:} {\it Term $T_3$.}
\label{6.2}
Employing the definition (\ref{upwind1}) of upwind quantities, we easily establish that
\begin{align*}
& T_3= T_{3,1} + R_{3,1}, \\
& \mbox{with }T_{3,1}= \deltat\sum_{n=1}^m \sum_{K\in{\cal T}}\sum_{\sigma\in {\cal E(K)}}|\sigma|\vr_\sigma^{n,{\rm up}}\Big(\hat {\bu}_{\sigma}^{n,{\rm up}}-
\hat {\bU}^{n, {\rm up}}_{h,\sigma}\Big)\cdot{\bU}^n_{h,K} \bu_\sigma^n\cdot\bn_{\sigma,K}, \quad
R_{3,1}=
\deltat\sum_{n=1}^m { \sum_{\sigma \in {\cal E}_{\rm int}}R_{3,1}^{n,\sigma}}, \\
&\mbox{and }{ R_{3,1}^{n,\sigma}}= |\sigma|\vr_K^n \frac{|\bU_{h,K}^n-\bU_{h,L}^n|^2}2\,[\bu^n_\sigma\cdot\bn_{\sigma,K}]^+
+ |\sigma|\vr_L^n \frac{|\bU_{h,L}^n-\bU_{h,K}^n|^2}2\,[\bu^n_\sigma\cdot\bn_{\sigma,L}]^+, \; \forall \sigma=K|L \in {\cal E}_{\rm int}.
\end{align*}
{ Writing
$$
\bU^n_{h,K}-\bU^n_{h,L}= \bU^n_{h,K}-\bU^n_{h,\sigma}+\bU^n_{h,\sigma}-\bU^n_{h,L},\; \sigma=K|L\in {\cal E}_{\rm int},
$$
}
employing estimates (\ref{L2-1}) and (\ref{L1-3})$_{s=1}$ and the continuity of the mean value $\bU^n_\sigma{ =\bU_{h,\sigma}^n}$ of $\bU^n_h$ over faces $\sigma$, we infer by using the Taylor formula applied to function
$x\mapsto U^n(x)$,
\[
	{ |R_{3,1}^{n,\sigma}|}\le h^2\;c \|\nabla\bU\|^2_{L^\infty (Q_T;\Rm^9)} |\sigma|(\vr^n_K+\vr^n_L) |\bu^n_\sigma|,  \; \forall \sigma=K|L \in {\cal E}_{\rm int},
\]
whence
\begin{equation}\label{R3.1}
\begin{aligned}
  |R_{3,1}| & \le h\;c \|\nabla\bU\|^2_{L^\infty (Q_T;\Rm^9)}\Big(\sum_{K\in{\cal T}}\sum_{\sigma=K|L\in {\cal E}(K)} h|\sigma|(\vr^n_K+\vr^n_L)^{6/5}\Big)^{5/6}
\Big[\deltat \sum_{n=1}^m\Big(\sum_{K\in{\cal T}}\sum_{\sigma\in {\cal E}(K)}h|\sigma||\bu^n_\sigma|^6\Big)^{1/3}\Big]^{1/2}
\\ &\le h \; c(M_0,E_0,\|\nabla\bU\|_{L^\infty(Q_T;\Rm^{9})}),
\end{aligned}
\end{equation}
provided $\gamma\ge 6/5$,
thanks to  the discrete H\"older inequality, the equivalence relation (\ref{reg1}), the equivalence of norms (\ref{norms1}) and energy bounds  listed in Corollary \ref{Corollary1}.

Evidently, for each face $\sigma=K|L\in {\cal E}_{\rm int}$,
$
\bu_{\sigma}^n\cdot\bn_{\sigma,K}+\bu_\sigma^n\cdot{\vc n}_{\sigma,L}=0;$ whence, finally
\begin{equation}\label{T3.1}
T_{3,1}= \deltat\sum_{n=1}^m\sum_{K\in{\cal T}}\sum_{\sigma\in {\cal E}(K)}|\sigma|\vr_\sigma^{n,{\rm up}}\Big(\hat
{\bu}_{\sigma}^{n,{\rm up}}-\hat{\bU}^{n,{\rm up}}_{h,\sigma}\Big)\cdot\Big({\bU}^n_{h,K}-\bU^n_\sigma\Big) \bu_\sigma^n\cdot\bn_{\sigma,K}
\end{equation}
{ Let us now decompose the  term $ T_{3,1}$ as
\begin{equation*}
 \begin{aligned}
&T_{3,1}= T_{3,2}+ R_{3,2}, \mbox{ with } { R_{3,2}=\deltat\sum_{n=1}^mR^{n}_{3,2}}, \\
&T_{3,2}= \deltat\sum_{n=1}^m\sum_{K\in{\cal T}}\sum_{\sigma\in {\cal E}(K)}|\sigma|\vr_\sigma^{n,{\rm up}}
\Big(\hat{\bU}^{n,{\rm up}}_{h,\sigma}-\hat{\bu}^{n,{\rm up}}_{\sigma}\Big)\cdot\Big(\bU^n_\sigma-{\bU}^n_{h,K}\Big) \hat\bu_\sigma^{n,{\rm up}}\cdot\bn_{\sigma,K},   \mbox{ and }\\
&{ R^{n}_{3,2}}=\sum_{K\in{\cal T}}\sum_{\sigma\in {\cal E}(K)} |\sigma|\vr_\sigma^{n,{\rm up}}
\Big(\hat{\bU}^{n,{\rm up}}_{h,\sigma}-\hat{\bu}^{n,{\rm up}}_{\sigma}\Big)\cdot\Big(\bU^n_\sigma-{\bU}^n_{h,K}\Big) \Big(\bu_{\sigma}^n-\hat\bu_\sigma^{n,{\rm up}}\Big)\cdot\bn_{\sigma,K}.
 \end{aligned}
\end{equation*}
By virtue of { discrete} H\"older's inequality and the { first order Taylor formula applied to function $x\mapsto \vc U^n(x)$ in order to evaluate the difference $\bU^n_\sigma-{\bU}^n_{h,K}$}, we get
\begin{equation*}
 \begin{aligned}
	{ |R^{n}_{3,2}|} & \le
	c \|\nabla\vc  U\|_{L^\infty(Q_T;\Rm^9)}\Big(\sum_{K\in{\cal T}}\sum_{\sigma\in {\cal E}(K)}h|\sigma|
\vr_\sigma^{n,{\rm up}}\Big|\hat\bu_\sigma^{n,{\rm up}}-\hat\bU_{h,\sigma}^{n,{\rm up}}\Big|^2\Big)^{1/2}\\
& \times
\Big(\sum_{K\in{\cal T}}\sum_{\sigma\in {\cal E}(K)}h|\sigma|
|\vr_\sigma^{n,{\rm up}}|^{\gamma_0}\Big)^{1/(2{\gamma_0})}\Big(\sum_{K\in{\cal T}}\sum_{\sigma\in {\cal E}(K)}h|\sigma|
\Big|\bu_\sigma^{n}-\hat\bu_\sigma^{n,{\rm up}}\Big|^q\Big)^{1/q},
 \end{aligned}
\end{equation*}
where $\frac 12+\frac 1{2\gamma_0}+\frac 1q=1$, $\gamma_0={\rm min}\{\gamma, 2\}$ and $\gamma\ge 3/2$. For the sum in the last term of the above product, we have
$$
\sum_{K\in{\cal T}}\sum_{\sigma\in {\cal E}(K)}h|\sigma|
\Big|\bu_\sigma^{n}-\hat\bu_\sigma^{n,{\rm up}}\Big|^q\le c \sum_{K\in{\cal T}}\sum_{\sigma\in {\cal E}(K)}h|\sigma|
|\bu_\sigma^{n}-\bu_K^{n}|^q
$$
$$
\le c \Big(\sum_{K\in {\cal T}}\sum_{\sigma\in {\cal E}(K)}
\Big(\|\bu_\sigma^{n}-\bu^n\|_{L^q(K;\Rm^3)}^q+
\sum_{K\in {\cal T}}
\|\bu^n-\bu_K^{n}\|_{L^q(K;\Rm^3)}^q\Big)
\le c h^{\frac {2\gamma_0-3}{2\gamma_0}q}|\bu^n|_{V^2_h(\Omega;\Rm^3)}^q,
$$
where we have used the definition (\ref{upwind1}), the Minkowski inequality and the interpolation inequalities
(\ref{L2+-1}--\ref{L2+-2}). Now we can go back to the estimate of $R_{3,2}^n$ taking into account the upper bounds
(\ref{est0}), (\ref{est3}--\ref{est4}), in order to get
 \begin{equation}\label{R3.2}
 |R_{3,2}|\le h^A\;c (M_0,E_0,\|\nabla\vc  U\|_{L^\infty(Q_T;\Rm^9)})
 \end{equation}
 provided $\gamma\ge 3/2$, where $A$ is given in (\ref{A1}).

 Finally, we rewrite term $T_{3,2}$ as
 \begin{equation}\label{T3}
 \begin{aligned}
&T_{3,2}= T_{3,3}+ R_{3,3}, \mbox{ with } { R_{3,3}=\deltat\sum_{n=1}^mR^{n}_{3,3}}, \\
&T_{3,3}= \deltat\sum_{n=1}^m\sum_{K\in{\cal T}}\sum_{\sigma\in {\cal E}(K)}|\sigma|\vr_\sigma^{n,{\rm up}}
\Big(\hat{\bU}^{n,{\rm up}}_{h,\sigma}-\hat{\bu}^{n,{\rm up}}_{\sigma}\Big)\cdot\Big(\bU^n_\sigma-{\bU}^n_{h,K}\Big) \hat\bU_{h,\sigma}^{n,{\rm up}}\cdot\bn_{\sigma,K},   \mbox{ and }\\
&{ R^{n}_{3,3}}=\sum_{K\in{\cal T}}\sum_{\sigma\in {\cal E}(K)} |\sigma|\vr_\sigma^{n,{\rm up}}
\Big(\hat{\bU}^{n,{\rm up}}_{h,\sigma}-\hat{\bu}^{n,{\rm up}}_{\sigma}\Big)\cdot\Big(\bU^n_\sigma-{\bU}^n_{h,K}\Big) \Big(\hat\bu_\sigma^{n,{\rm up}}-\hat \bU_{h,\sigma}^{n,{\rm up}}\Big)\cdot\bn_{\sigma,K};
 \end{aligned}
\end{equation}
whence
\begin{equation}\label{R3.3}
|R_{3,3}|\le c(\|\nabla\bU\|_{L^\infty(Q_T,\Rm^9)})\; \deltat\sum_{n=1}^m{\cal E}(\vr^n,\vc u^n\,|\, r^n,\bU^n).
\end{equation}

 }

\vspace{2mm}\noindent
{\bf Step 3:} {\it Term $T_4$.}
\label{6.3}
Using the Stokes formula and the property (\ref{L1-1}) in Lemma \ref{Lemma1}, we easily see that
 \begin{equation}\label{T4}
 T_4=-\deltat \sum_{n=1}^m\sum_{K\in {\cal T}}{{\rm \int_K}} p(\vr_K^n)\dv\bU^n\dx.
 \end{equation}

\vspace{2mm}\noindent
{\bf Step 4:} {\it Term $T_5$.}
 \label{6.4}
Using the Taylor formula, we get
\[
H'(r_K^n)-H'(r_K^{n-1})=H''(r_K^{n})(r_K^n-r_K^{n-1}) -\frac 12H'''(\overline r_K^n)(r_K^n-r_K^{n-1})^2,
\]
where $\overline r_K^n\in[\min(r_K^{n-1},r_K^n), \max(r_K^{n-1},r_K^n)]$;
we infer
\begin{equation*}
\begin{aligned}
	& T_5= T_{5,1}+ R_{5,1},\mbox{ with }  T_{5,1}=\deltat \sum_{n=1}^m\sum_{K\in {\cal T}}|K| ( r^n_K-\vr^n_K)\frac{p'(r_K^n)}{r_K^n} \frac{r_K^n-r_K^{n-1}}{\deltat}, \,  R_{5,1}=\deltat \sum_{n=1}^m\sum_{K\in {\cal T}} R_{5,1}^{n,K}, \mbox{ and } \\
	& R_{5,1}^{n,K}= \frac 12|K|H'''(\overline r_K^n)\frac{(r_K^n-r_K^{n-1})^2}{\deltat}(\vr_K^n-r_K^n).
 \end{aligned}
\end{equation*}
Consequently, by the { first order Taylor formula applied to function $t\mapsto r(t,x)$ on the interval $(t_{n-1}, t_n)$} and thanks to the mass conservation \eqref{masscons}
\begin{equation}\label{R5.1}
	|R_{5,1}|\le \deltat \;c(M_0,\underline r,\overline r, |p'|_{C^1([\underline r,\overline r]},\|\partial_t r\|_{L^\infty(Q_T)}),
\end{equation}
where $\underline r$, $\overline r$ are defined in (\ref{dr,U}).

\vspace{2mm}
Let us now decompose  $T_{5,1}$ as follows:
\begin{equation}\label{T5}
\begin{aligned}
	& T_{5,1}=T_{5,2}+ R_{5,2}, \mbox{ with }T_{5,2}=\deltat \sum_{n=1}^m\sum_{K\in {\cal T}}{ \int_K} ( r^n_K-\vr^n_K)\frac{p'(r_K^n)}{r_K^n} [\partial_t r]^n { {\rm d}x}, \,  R_{5,2}=\deltat \sum_{n=1}^m\sum_{K\in {\cal T}} R_{5,2}^{n,K}, \mbox{ and} \\
	& R_{5,2}^{n,K}= {\int_K} ( r^n_K-\vr^n_K)\frac{p'(r_K^n)}{r_K^n}\Big(\frac{r_K^n-r_K^{n-1}}{\deltat} -[\partial_t r]^n\Big){ {\rm d} x}. \end{aligned}
\end{equation}
In accordance with (\ref{notation2-}), here and in the sequel, $[\partial_t r]^n(x)=\partial_t r(t_n,x)$.
{ We write using twice the Taylor formula in the integral form and the Fubini theorem,
$$
|R_{5,2}^{n,K}|= \frac 1\deltat\Big|{p'(r^n_K)}{r^n_K} (\vr^n_K-r^n_K)\int_K\int^{t_n}_{t_{n-1}}\int_s^{t_n}\partial_t ^2 r(z){\rm d}z{\rm d}s {\rm d}x\Big|
$$
$$
\le \frac {p'(r^n_K)}{r^n_K}\int^{t_n}_{t_{n-1}}\int_K|\vr^n_K-r^n_K|\Big|\partial_t ^2 r(z)\Big|{\rm d}x{\rm d}z{\rm d}s
$$
$$
\le \frac{p'(r^n_K)}{r^n_K}
\|\vr^n-\hat r^n\|_{L^{\gamma}(K)}\int^{t_n}_{t_{n-1}}\|\partial_t^2 r(z)\|_{L^{\gamma'}(K)}{\rm d z}{\rm d s}.
$$
Therefore, by virtue of Corollary \ref{Corollary1}, we have estimate
\begin{equation}\label{R5.2}
	|R_{5,2}|\le \deltat\; c(M_0, E_0,\underline r,\overline r, |p'|_{C^1([\underline r,\overline r]},\|\partial^2_t r\|_{L^1(0,T; L^{\gamma'}(\Omega)}).
\end{equation}
}

\vspace{2mm}\noindent
{\bf Step 5:} {\it Term $T_6$.}
%
Using the same argumentation as in formula (\ref{T3.1}), we may write
\begin{equation}
 \begin{aligned}
	  &T_6=T_{6,1} + R_{6,1},\quad  R_{6,1}=\deltat \sum_{n=1}^m \sum_{K\in {\cal T}}\sum_{\sigma\in {\cal E}(K)}R_{6,1}^{n,\sigma,K}, \mbox{ with} \\
	&T_{6,1}=\deltat\sum_{n=1}^m\sum_{K\in {\cal T}}\sum_{\sigma=K|L\in {\cal E}(K)}|\sigma|\vr_K^{n}\Big( H'( r_K^{ {n-1}})-H'(r_\sigma^{ {n-1}})\Big)\bu_\sigma^n\cdot\bn_{\sigma,K}, \mbox{ and} \\
	&  R_{6,1}^{n,\sigma,K}=|\sigma|\Big(\vr_\sigma^{n,{\rm up}}-\vr_K^{n}\Big)\Big( H'( r_K^{ {n-1}})-H'(r_\sigma^{ {n-1}})\Big)\bu_\sigma^n\cdot\bn_{\sigma,K}, { \mbox{ for } \sigma=K|L.}
 \end{aligned}
\end{equation}
We   estimate this term separately for $\gamma\le 2$ and $\gamma>2$.
If $\gamma\le 2$, motivated by Lemma \ref{Lemma5}, we may write
\begin{multline}\label{R6.1a}
	 |R_{6,1} ^{n,\sigma,K}| \le \sqrt h\, \|\nabla H'(r)\|_{L^\infty(Q_T;\Rm^3)} |\sigma| \\
	\times \Big( \frac { |\vr_\sigma^{n,{\rm up}} - \vr_K^{n} |} {\max(\vr_K,\vr_L)^{(2-\gamma)/2}} \,\sqrt{|\bu_\sigma^n\cdot\bn_{\sigma,K}}|  1_{\overline\vr_\sigma^n\ge1} \sqrt h(\vr^n_K+\vr^n_L)^{(2-\gamma)/2} \sqrt{|\bu_\sigma^n\cdot\bn_{\sigma,K}}|
	\\ + |\vr_\sigma^{n,{\rm up}}-\vr_K^{n}|\,\sqrt{|\bu_\sigma^n\cdot\bn_{\sigma,K}|} 1_{\overline\vr_\sigma^n<1}
\sqrt h \sqrt{|\bu_\sigma^n\cdot\bn_{\sigma,K}|} \Big),
\end{multline}
where we again use the { first order Taylor formula applied to function $H'$ between endpoints $r_K^{n-1}$,
$r_\sigma^{n-1}$,}  and where the numbers $\overline\vr_\sigma^n$ are defined in Lemma \ref{Theorem3}.
Consequently, an application of the H\"older and Young inequalities yields
\begin{equation}\label{R6.1b}
 \begin{aligned}
 |R_{6,1}|
	& \le \sqrt h\,  { c} \|\nabla H'(r)\|_{L^\infty(Q_T;\Rm^3)}
		\deltat \sum_{n=1}^m\Big[\Big(\sum_{K\in {\cal T}}\sum_{\sigma=K|L\in {\cal E}(K)}|\sigma| \frac {(\vr_\sigma^{n,{\rm up}}-\vr_K^{n})^2}{\max(\vr_K,\vr_L)^{(2-\gamma)}}\,|\bu_\sigma^n\cdot\bn_{\sigma,K}| 1_{\overline\vr_\sigma^n\ge1}\Big)^{1/2}
		\\ & \hspace{6cm} \times \Big(\sum_{K\in {\cal T}}\sum_{\sigma\in {\cal E}(K)}|\sigma| h\vr_K^{2-\gamma} \,|\bu_\sigma^n\cdot\bn_{\sigma,K}|\Big)^{1/2}
		\\  & \quad  + \Big(\sum_{K\in {\cal T}}\sum_{\sigma=K|L\in {\cal E}(K)}|\sigma|h (\vr_\sigma^{n,{\rm up}}-\vr_K^{n})^2\,|\bu_\sigma^n\cdot\bn_{\sigma,K}| 1_{\overline\vr_\sigma^n<1}\Big)^{1/2}
 		 \Big(\sum_{K\in {\cal T}}\sum_{\sigma\in {\cal E}(K)}|\sigma| h \,|\bu_\sigma^n\cdot\bn_{\sigma,K}|\Big)^{1/2}\Big]
	\\ & \le \sqrt h\, { c}   \|\nabla H'(r)\|_{L^\infty(Q_T;\Rm^3)} \deltat \sum_{n=1}^m\Big[\Big(\sum_{K\in {\cal T}}\sum_{\sigma=K|L\in 	{\cal E}(K)}|\sigma| \frac {(\vr_\sigma^{n,{\rm up}}-\vr_K^{n})^2} {\max(\vr_K,\vr_L)^{(2-\gamma)}}  \,  |\bu_\sigma^n \cdot \bn_{\sigma,K}| 1_{\overline\vr_\sigma^n\ge1}
		\\  & \qquad \qquad + \Big(\sum_{K\in {\cal T}} |K|\vr_K^{6(2-\gamma)/5}\Big)^{5/6} \Big(\sum_{\sigma\in {\cal E}}|\sigma| h|\bu_\sigma^n|^6\Big)^{1/6}
		\\  & \qquad \qquad + \sum_{K\in {\cal T}}\sum_{\sigma=K|L\in {\cal E}(K)}|\sigma|h (\vr_\sigma^{n,{\rm up}}-\vr_K^{n})^2\,|\bu^n_\sigma\cdot\bn_{\sigma,K}| 1_{\overline\vr_K^n<1}+ |\Omega|^{5/6} \Big(\sum_{\sigma\in {\cal E}}|\sigma| h|\bu_\sigma^n|^6\Big)^{1/6}\Big]
\end{aligned}
\end{equation}
We deduce employing the discrete H\"older inequality
{
$$
\deltat \sum_{n=1}^m\Big(\sum_{K\in {\cal T}} |K|\vr_K^{6(2-\gamma)/5}\Big)^{5/6} \Big(\sum_{\sigma\in {\cal E}}|\sigma| h|\bu_\sigma^n|^6\Big)^{1/6}
$$
$$
\le \Big[\deltat \sum_{n=1}^m \Big(\sum_{K\in {\cal T}} |K|\vr_K^{6(2-\gamma)/5}\Big)^{5/3}   \Big]^{1/2}\Big[\deltat \sum_{n=1}^m\Big(\sum_{\sigma\in {\cal E}}|\sigma| h|\bu_\sigma^n|^6\Big)^{1/3}\Big]^{1/2},
$$
and
$$
\deltat\sum_{n=1}^m\Big(\sum_{\sigma\in {\cal E}}|\sigma| h|\bu_\sigma^n|^6\Big)^{1/6}\le \sqrt T
\Big[k\sum_{n=1}^m\Big(\sum_{\sigma\in {\cal E}}|\sigma| h|\bu_\sigma^n|^6\Big)^{1/3}\Big]^{1/2}
$$
}
Coming back to (\ref{R6.1b}) we deduce that
\begin{equation}\label{R6.1}
|R_{6,1}|
\le \sqrt h \; c(M_0,E_0,\underline r,\overline r, |p'|_{C([\underline r,\overline r])}, \|\nabla r\|_{L^\infty(Q_T;\Rm^3)})
\end{equation}
provided $\gamma\ge 12/11$, where we use estimate (\ref{dissipative1}), estimates (\ref{est1}), (\ref{est3}) of Corollary \ref{Corollary1} and equivalence relation (\ref{norms1}).
In the case $\gamma>2$,    the same final bound may be obtained by a similar argument, replacing the  estimate (\ref{dissipative1})  by (\ref{dissipative2}).

\vspace{2mm}
Let us now decompose the term $T_{6,1}$ as
{
\begin{equation*}
\begin{aligned}
	&T_{6,1}=T_{6,2}+ R_{6,2},  \mbox{ with }  T_{6,2}=\deltat\sum_{n=1}^m\sum_{K\in{\cal T}}\sum_{\sigma=K|L\in {\cal E}(K)}
|\sigma|\vr^n_K H''(r_K^{ n-1})(r_K^{ n-1}-r_\sigma^{ n-1}) [\bu^n_\sigma\cdot\bn_{\sigma,K}],
 \\    &  R_{6,2}=\deltat\sum_{n=1}^m\sum_{K\in{\cal K}}\sum_{\sigma\in {\cal E}(K) }
R_{6,2}^{n,\sigma,K},\;
\mbox{and }\\ & R_{6,2}^{n,\sigma,K}=|\sigma|\vr^n_K \Big(H'(r^{n-1}_K)-H'(r^{n-1}_\sigma)-H''(r^{n-1}_K)(r^{n-1}_K-r^{n-1}_\sigma)\Big)
[\bu^n_\sigma\cdot\bn_{\sigma,K}]
 \end{aligned}
\end{equation*}
}
Therefore, by virtue of the { second order Taylor formula applied to function H'}, H\"older's inequality, (\ref{sob1}), (\ref{norms1}),
and (\ref{est0}), (\ref{est3}) in Corollary \ref{Corollary1}, we have, provided
$\gamma\ge 6/5$,
{
\begin{align}
| R_{6,2}| &\le h c \Big(|H''|_{C([\underline r,\overline r])}+|H'''|_{C([\underline r,\overline r])}\Big)\|\nabla r\|_{L^\infty(Q_T;\Rm^3)} \|\vr\|_{L^\infty(0,T;L^\gamma(\Omega))} \|\bu\|_{L^2(0,T;V_h^2(\Omega;\Rm^3))}
\nonumber \\
\label{R6.2}
& \le  h\;
c(M_0,E_0,\underline r, \overline r, |p'|_{C^1([\underline r,\overline r])},  \|\nabla r\|_{L^\infty(Q_T;\Rm^3)} ),
\end{align}
}
where in the first line we have used notation (\ref{notation1}).

\vspace{2mm}

Let us now deal with the term  $T_{6,2}$.
Noting that $ \displaystyle
\int_K\nabla r^{ n-1} \dx =   \sum_{\sigma\in {\cal E}(K)} |\sigma|(r_\sigma^{ n-1}-r_K^{ n-1})\bn_{\sigma,K},$ we may write
\begin{multline*}
\sum_{\sigma\in {\cal E}(K)} |\sigma|\vr^n_K H''(r_K^{ n-1})(r_K^{ n-1}-r_\sigma^{ n-1}) [\bu^n_\sigma\cdot\bn_{\sigma,K}]
\\= -\int_K\vr^n_K H''(r_K^{ n-1}) \bu^n_K\cdot\nabla r^{ n-1}\dx +
\sum_{\sigma\in {\cal E}(K)} |\sigma|\vr^n_K H''(r_K^{ n-1})(r_K^{ n-1}-r_\sigma^{ n-1}) (\bu^n_\sigma-\bu^n_K)\cdot\bn_{\sigma,K}.
\end{multline*}
Consequently, $T_{6,2}= T_{6,3}+ R_{6,3},$ with
$$
\begin{aligned}
& T_{6,3}=
 -\deltat\sum_{n=1}^m\sum_{K \in {\cal T}}\int_K{\vr^n_K} H''(r_K^{ n-1}) \bu^n \cdot\nabla r^{ n-1}\dx, \\
&  R_{6,3} = \deltat\sum_{n=1}^m\sum_{K \in {\cal T}} \int_K\vr^n_K H''(r_K^{ n-1})(\bu^n- \bu^n_K)\cdot\nabla r^{ n-1}\dx
 \\ & \hspace{3cm}+\deltat\sum_{n=1}^m\sum_{K \in {\cal T}}\sum_{\sigma\in {\cal E}(K)} |\sigma|\vr^n_K H''(r_K^{ n-1})(r_K^{ n-1}-r_\sigma^{ n-1})
(\bu^n_\sigma-\bu^n_K)\cdot\bn_{\sigma,K},
\end{aligned}
$$
{
$$
|R_{6,3}|\le c\,\|H''(r)\nabla r\|_{L^\infty(Q_T;\Rm^3)}\Big[ \deltat\sum_{n=1}^m\sum_{K \in {\cal T}}\|\vr^n_K\|_{L^{\gamma_0}(K)}\|\bu^n- \bu^n_K\|_{L^{\gamma_0'}(K;\Rm^3)}+
$$
$$
\deltat\sum_{n=1}^m\sum_{K \in {\cal T}}\sum_{\sigma\in {\cal E}(K)} \|\vr^n_K\|_{L^{\gamma_0}(K)}\|\bu_\sigma^n- \bu^n_K\|_{L^{\gamma_0'}(K;\Rm^3)}\Big],\quad \gamma_0=\min\{\gamma,2\},
$$
where we have used the H\"older inequality, and also the Taylor formula applied to function $x\mapsto r(t_{n-1},x)$
together with equivalence relation (\ref{reg1}) yielding $|\sigma|h\le |K|$, to treat the second term.
Consequently, by virtue of H\"older's inequality, interpolation inequality (\ref{interpol1}) (to estimate
$\|\vc u^n-\vc u^n_K\|_{L^{\gamma_0'}(K;\Rm^3)}$ by $h^{(5\gamma_0-6)/(2\gamma_0)}\|\Grad\vc u^n \|_{L^2(K;\Rm^9)}$, $\gamma_0=\min\{\gamma,2\}$)
in the first term, and by the the H\"older inequality
 and (\ref{interpol1}--\ref{interpol2}) (to estimate $\|\vc u_\sigma^n-\vc u^n_K\|_{L^{\gamma'}(K;\Rm^3)}$ by $h^{(5\gamma_0-6)/(2\gamma_0)}\|\Grad\vc u^n \|_{L^2(K;\Rm^9)}$) in the second term, we get
$$
|R_{6,3}|\le c\,h^{(5\gamma_0-6)/(2\gamma_0)}\,\|H''(r)\nabla r\|_{L^\infty(Q_T;\Rm^3)} \deltat\sum_{n=1}^m\Big(\sum_{K \in {\cal T}}\|\vr^n_K\|^2_{L^{\gamma_0}(K)}\Big)^{1/2}\Big(\sum_{K \in {\cal T}}\|\Grad\bu^n\|^2_{L^{2}(K;\Rm^9)}\Big)^{1/2}
$$
$$
\le
c\,h^{(5\gamma_0-6)/(2\gamma_0)}\,\|H''(r)\nabla r\|_{L^\infty(Q_T;\Rm^3)} \deltat\sum_{n=1}^m\Big(\sum_{K \in {\cal T}}\|\vr^n_K\|^{\gamma_0}_{L^{\gamma_0}(K)}\Big)^{1/\gamma_0}\Big(\sum_{K \in {\cal T}}\|\Grad\bu^n\|^2_{L^{2}(K;\Rm^9)}\Big)^{1/2},
$$
provided $\gamma\ge 6/5$,
where we have used the discrete H\"older inequality and  the algebraic inequality (\ref{dod1*}).
}
Now it remains to use (\ref{est0}), (\ref{est3}) in Corollary \ref{Corollary1} in order to get
\begin{equation}\label{R6.3}
|R_{6,3}|\le h^A \;c(M_0,E_0,\underline r, \overline r, |p'|_{C^1([\underline r,\overline r])} \|\nabla r\|_{L^\infty(Q_T;\Rm^3)} ),
\end{equation}
where $A$ is defined in \eqref{A1}.

{
Finally we write
 $T_{6,3}= T_{6,4}+ R_{6,4},$ with
\begin{equation}\label{T6}
\begin{aligned}
& T_{6,4}=
 -\deltat\sum_{n=1}^m\sum_{K \in {\cal T}}\int_K{\vr^n_K}\frac{ p'(r_K^{n})}{r_K^n} \bu^n\cdot\nabla r^{ n}\dx, \\
&  R_{6,4} = \deltat\sum_{n=1}^m\sum_{K \in {\cal T}} \int_K\vr^n_K \Big(H''(r_K^{ n})\nabla r^{n}-
H''(r_K^{n-1})\nabla r^{n-1}\Big)\cdot\bu^n\dx,
\end{aligned}
\end{equation}
{ where by the same token as in (\ref{R5.2}),
\begin{equation}\label{R6.4}
|R_{6,4}|\le \deltat\; c (M_0,E_0, \underline r,\overline r, |p'|_{C^1([\underline r,\overline r])}, \|\nabla r, \partial_t r\|_{L^\infty(Q_T;\Rm^4)}, \|\partial_t\nabla r\|_{L^2(0,T; L^{{6\gamma}/{(5\gamma-6)}}(\Omega;\Rm^3))}).
\end{equation}
}
}

\vspace{2mm}
We are now in position to conclude the proof of Lemma \ref{6.6}: we obtain the inequality \eqref{relativeenergy-} by gathering the principal terms (\ref{T2}), (\ref{T3}), (\ref{T4}), (\ref{T5}), (\ref{T6}) and  the residual terms estimated in   (\ref{R2.1}), (\ref{R2.2}), (\ref{R3.1}), (\ref{R3.2}), (\ref{R3.3}), (\ref{R5.1}), (\ref{R5.2}), (\ref{R6.1a}), \eqref{R6.1b}, (\ref{R6.2}), (\ref{R6.3}), { (\ref{R6.4}) at}  the right hand side $\sum_{i=1}^6 T_i$  of the discrete relative energy inequality (\ref{drelativeenergy}).

\end{proof}

\section{ A discrete identity satisfied by  the strong solution}\label{7}
{ This section is devoted to the proof of a discrete identity satisfied by any strong solution. This identity
is stated in Lemma \ref{strongentropy} below. It will be used in combination with the approximate relative energy inequality stated in Lemma \ref{refrelenergy} to deduce the convenient form of the relative energy inequality verified by any function being a strong solution to the compressible Navier-Stokes system. This last step is performed in the next section.}

\begin{lm}[A discrete identity for strong solutions]\label{strongentropy}
Suppose that $\Omega\subset \R^3$ is a bounded polyhedral domain and  ${\cal T}$ a regular triangulation { introduced in Section \ref{3.1}}.
Let the pressure $p$ be a $C^2(0,\infty)$ function satisfying hypotheses (\ref{hypp}) and (\ref{pressure1}) with
$\gamma\ge 3/2$. Let $(r,\bU)$ belong to the class (\ref{dr,U}) satisfy equation \eqref{pbcont} with the viscosity coefficients $\mu$, $\lambda$ obeying (\ref{visc}).

Let $(\vr^0,\bu^0) \in L_h^{+}(\Omega) \times \bW_h(\Omega)$ and suppose that $(\vr^n)_{1\le n \le N}\in [L_h^{+}(\Omega)]^N$, $(\bu^n)_{1\le n \le N} \in [\bW_h(\Omega)]^N$ is a solution of the discrete problem \eqref{scheme}.
Then there exists
\begin{align*}
& c=c\Big(M_0, E_0, \underline r,\overline r, |p'|_{C^1([\underline r,\overline r])},
\|(\nabla r, \partial_t r, \bU, \nabla\bU, { \nabla^2\bU}, \partial_t\bU, \partial_t^2\bU,\partial_t\nabla\bU,)\|_{L^\infty (Q_T;\Rm^{58})})\Big)> 0,
\end{align*}
such that for any $m=1,\ldots,N$, the following identity holds:
\begin{equation}\label{strong1}
\begin{aligned}
	&\deltat\sum_{n=1}^m\sum_{K\in {\cal T}}\int_K\Big(\mu\nabla\bU_h^n\cdot\nabla(\bu^n-\bU^n_{ h})+ (\mu +\lambda)\dv \bU^n_{ h}\dv (\bu^n-\bU^n_{ h})\Big)\dx
\\
	&\qquad +\deltat\sum_{n=1}^m\sum_{K\in {\cal T}}\int_Kr_K^{n-1}\frac{\bU^n_{ h,K}-\bU^{n-1}_{ h,K}}{\deltat}\cdot (\bu_K^{ n}-\bU_{h,K}^{ n})\dx
\\
	 &\qquad+ { \deltat\sum_{n=1}^m\sum_{K\in{\cal T}}\sum_{\sigma\in {\cal E}(K)}|\sigma| \hat r_\sigma^{n,{\rm up}}
[\hat \bU_{h,\sigma}^{n,{\rm up}}\cdot{\vc n}_{\sigma,K}](\bU_\sigma^n-\bU^n_{h,K})\cdot(\hat\bu_\sigma^{n,{\rm up}}-\hat\bU_{h,\sigma}^{n,{\rm up}})}
\\
	&\qquad +\deltat \sum_{n=1}^m \sum_{K\in {\cal T}}\int_K p(r_K^n)\dv  \bU^n\dx
+\deltat \sum_{n=1}^m \sum_{K\in{\cal T}}\int_K p'(r^n_K)\bu^n\cdot\nabla r^n\dx
{ + {\cal R}^m_{h,\deltat} =0},
\end{aligned}
\end{equation}
{ where
\[
|{\cal R}_{h, \deltat}^m|\le c\Big( h + \deltat\Big)
\]
 { and where we have  used notation (\ref{notation2-}) for $r^n$, $\bU^n$  and (\ref{vhat}--\ref{vtilde}) for $\vc U^n_h$, $\vc U^n_{h,K}$, $r^n_K$, $\vc u^n_{\sigma}$.}
}
%
\end{lm}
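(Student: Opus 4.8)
The plan is to obtain \eqref{strong1} as the counterpart, for the strong solution $(r,\bU)$, of the manipulations producing the discrete energy identity (Lemma \ref{Theorem3}) and the exact discrete relative energy inequality (Theorem \ref{Theorem4}): there the discrete test functions $\bU^n$, $\tfrac12|\bU^n_h|^2$, $H'(r^n)$ were inserted into the scheme, whereas here I would insert the discrete solution $\bu^n$ into the continuous equations satisfied pointwise by $(r,\bU)$, the two sets of computations being arranged so that, once added in the next section, the convective and pressure contributions cancel modulo terms controlled by the relative energy. Concretely, I would start from \eqref{mov2} for $(r,\bU)$, rewritten with the help of $\partial_t r+\dv(r\bU)=\bzero$ in the non-conservative form $r\,\partial_t\bU+r\,\bU\cdot\nabla\bU-\mu\Delta\bU-(\mu+\lambda)\nabla\dv\bU+\nabla p(r)=\bzero$, valid everywhere on $\overline Q_T$ since $(r,\bU)$ is of class $C^2$. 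Evaluating at $t=t_n$, multiplying scalarly by $\bu^n-\bU^n_h\in\bW_h(\Omega)$, integrating cell by cell over $\mesh$, then multiplying by $\deltat$ and summing over $n=1,\dots,m$, produces the four groups of \eqref{strong1} (viscous, time-derivative, convective, pressure), each affected by consistency errors gathered into ${\cal R}^m_{h,\deltat}$.

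For the viscous group, integration by parts on each $K$ turns $-\mu\int_K\Delta\bU^n\cdot(\bu^n-\bU^n_h)$ into $\mu\int_K\nabla\bU^n:\nabla(\bu^n-\bU^n_h)$ minus a boundary integral; summing over $K$, and using that $\bu^n-\bU^n_h$ has continuous face means, one subtracts the face average of $\partial_{\bn}\bU^n$ and bounds the non-conforming jump contribution by $c\,h\,\|\nabla^2\bU\|_{L^\infty}\,\|\nabla(\bu^n-\bU^n_h)\|_{L^2(\Omega)}$; the term in $\nabla\dv\bU^n$ is handled identically, and replacing $\nabla\bU^n$ by $\nabla\bU^n_h$ costs another term of the same type by the Crouzeix--Raviart interpolation estimate. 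After $\deltat\sum_n$ and a Cauchy--Schwarz step these are $O(h)$, because $\deltat\sum_n\|\nabla(\bu^n-\bU^n_h)\|_{L^2(\Omega)}^2\le c$ by \eqref{est0} of Corollary \ref{Corollary1} and the boundedness of $\bU^n_h$. For the pressure group I write $\nabla p(r^n)=p'(r^n)\nabla r^n$ and split $\bu^n-\bU^n_h$: the part tested against $\bU^n_h$, integrated by parts and using $\int_K\dv\bU^n_h\,\dx=\int_K\dv\bU^n\,\dx$, gives $\sum_K\int_K p(r^n_K)\dv\bU^n\,\dx$ up to $O(h)$ errors (replacing $p(r^n)$ by $p(r^n_K)$ and the non-conforming jumps), while the part tested against $\bu^n$ gives $\sum_K\int_K p'(r^n_K)\,\bu^n\cdot\nabla r^n\,\dx$ up to an $O(h)$ error controlled by $\|p''(r)\nabla r\|_{L^\infty}$, $\|\nabla r\|_{L^\infty}$ and $\deltat\sum_n\|\bu^n\|_{L^6(\Omega)}\le c\sqrt T$ (from \eqref{est1}). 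The time-derivative group $\int_\Omega r^n\partial_t\bU^n\cdot(\bu^n-\bU^n_h)$ is converted to $\sum_K\int_K r^{n-1}_K\tfrac{\bU^n_{h,K}-\bU^{n-1}_{h,K}}{\deltat}\cdot(\bu^n_K-\bU^n_{h,K})$ by replacing $r^n$ by $r^{n-1}$ ($O(\deltat)$), $\partial_t\bU^n$ by $(\bU^n-\bU^{n-1})/\deltat$ ($O(\deltat)$, Taylor in time), and passing to $\bU^n_h$ and to cell averages of all factors ($O(h)$, exactly as for $R_{2,1}$, $R_{2,2}$), each piece being controlled via Hölder together with \eqref{masscons} and the energy bounds.

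The main obstacle is the convective group $\int_\Omega r^n\,\bU^n\cdot\nabla\bU^n\cdot(\bu^n-\bU^n_h)$. I would rewrite it in conservative form $\int_\Omega\dv(r^n\bU^n\otimes\bU^n)\cdot(\bu^n-\bU^n_h)$ (up to an $O(h)$ non-conforming jump, since $\bU^n|_{\partial\Omega}=0$), then discretise the smooth flux $r^n\bU^n\otimes\bU^n$ by the upwind rule of \eqref{dmom}, i.e.\ introduce $\sum_{K}\sum_{\sigma\in{\cal E}(K)}|\sigma|\,\hat r^{n,{\rm up}}_\sigma\,\hat\bU^{n,{\rm up}}_{h,\sigma}\,[\hat\bU^n_{h,\sigma}\cdot\bn_{\sigma,K}]\cdot(\bu^n_K-\bU^n_{h,K})$, the difference with the continuous integral being an $O(h)$ consistency residual obtained by Taylor expansion of the $C^2$ fields. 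A summation by parts over faces, the cancellation $\bU^n_\sigma\cdot\bn_{\sigma,K}+\bU^n_\sigma\cdot\bn_{\sigma,L}=0$, the boundary condition, and the approximate discrete mass balance $|K|\tfrac{r^n_K-r^{n-1}_K}{\deltat}+\sum_{\sigma\in{\cal E}(K)}|\sigma|\hat r^{n,{\rm up}}_\sigma[\hat\bU^n_{h,\sigma}\cdot\bn_{\sigma,K}]=O\big((h+\deltat)|K|\big)$ satisfied by $r$ (by consistency from $\partial_t r+\dv(r\bU)=0$) then reorganise this expression into the wanted term $\deltat\sum_n\sum_K\sum_\sigma|\sigma|\hat r^{n,{\rm up}}_\sigma[\hat\bU^{n,{\rm up}}_{h,\sigma}\cdot\bn_{\sigma,K}](\bU^n_\sigma-\bU^n_{h,K})\cdot(\hat\bu^{n,{\rm up}}_\sigma-\hat\bU^{n,{\rm up}}_{h,\sigma})$; the mass-balance residual also absorbs the discrepancy between the conservative time derivative $\tfrac{r^n_K\bU^n_{h,K}-r^{n-1}_K\bU^{n-1}_{h,K}}{\deltat}$ and the term $r^{n-1}_K\tfrac{\bU^n_{h,K}-\bU^{n-1}_{h,K}}{\deltat}$ produced earlier (this is the discrete image of $\bU(\partial_t r+\dv(r\bU))=0$). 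The leftover residuals involve differences such as $\bU^n_\sigma-\bU^n_{h,K}$, $r^n_\sigma-\hat r^{n,{\rm up}}_\sigma$, $\hat\bU^{n,{\rm up}}_{h,\sigma}-\bU^n_\sigma$ and $\bu^n_\sigma-\bu^n_K$, multiplied by $\hat r^{n,{\rm up}}_\sigma$ (bounded by $\overline r$) and by $\bu^n_\sigma$ or $\hat\bu^{n,{\rm up}}_\sigma-\hat\bU^{n,{\rm up}}_{h,\sigma}$, and are bounded exactly as $R_{6,1}$, $R_{6,2}$, $R_{6,3}$ in the proof of Lemma \ref{refrelenergy}: a factor $h$ or $h^{1/2}$ gained on each smooth difference by Taylor, Hölder's inequality, the equivalence relations \eqref{reg1}, the velocity interpolation and Sobolev inequalities of the Appendix (whose thresholds require $\gamma\ge3/2$), and the a priori bounds \eqref{est0}, \eqref{est1} of Corollary \ref{Corollary1}. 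Since every density factor here is the bounded smooth function $r$ and not the discrete $\vr$, no penalty from the merely $L^\gamma$ control of the density is incurred, so all residuals are $O(h)$ or $O(\deltat)$; collecting the four principal groups and bounding the sum of residuals by $c(h+\deltat)$, with $c$ depending only on the quantities listed in the statement, yields \eqref{strong1}.
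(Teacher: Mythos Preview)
Your overall plan---start from the non-conservative momentum equation $r\partial_t\bU+r\bU\cdot\nabla\bU+\nabla p(r)=\mu\Delta\bU+(\mu+\lambda)\nabla\dv\bU$ at $t=t_n$, test against $\bu^n-\bU^n_h$, integrate over $\Omega$ and sum---is exactly what the paper does, and your treatment of the viscous, pressure and time-derivative groups matches the paper's Steps~1, 2 and 4 essentially line for line.

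The difference is in the convective term ${\cal T}_3=\deltat\sum_n\int_\Omega r^n\bU^n\!\cdot\!\nabla\bU^n\!\cdot\!(\bu^n-\bU^n_h)\,\dx$. You propose to rewrite it in conservative form $\dv(r^n\bU^n\otimes\bU^n)$, discretise that flux by the upwind rule, and then use an approximate discrete continuity equation for $r$ to relate the outcome to the non-conservative time derivative. The paper takes a much more direct route and avoids the continuity equation entirely: it first replaces each smooth factor $r^n$, $\bU^n$, $(\bu^n-\bU^n_h)$ by its cell constant $r^n_K$, $\bU^n_{h,K}$, $(\bu^n_K-\bU^n_{h,K})$, leaving only $\nabla\bU^n$ non-constant on $K$; this costs $O(h)$ by the Poincar\'e-type estimates of the Appendix. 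Then $\int_K\nabla\bU^n\,\dx=\sum_{\sigma\in{\cal E}(K)}|\sigma|\,\bU^n_\sigma\otimes\bn_{\sigma,K}$ is an \emph{exact} identity, and since $\sum_{\sigma\in{\cal E}(K)}|\sigma|\,\bn_{\sigma,K}=0$ one may insert $\bU^n_{h,K}$ to produce directly
\[
\sum_{\sigma\in{\cal E}(K)}|\sigma|\,r^n_K\,[\bU^n_{h,K}\!\cdot\!\bn_{\sigma,K}]\,(\bU^n_\sigma-\bU^n_{h,K})\!\cdot\!(\bu^n_K-\bU^n_{h,K}).
\]
Finally the cell values $r^n_K$, $\bU^n_{h,K}$, $(\bu^n_K-\bU^n_{h,K})$ are traded for the upwind values $\hat r^{n,{\rm up}}_\sigma$, $\hat\bU^{n,{\rm up}}_{h,\sigma}$, $(\hat\bu^{n,{\rm up}}_\sigma-\hat\bU^{n,{\rm up}}_{h,\sigma})$; since $(\bU^n_\sigma-\bU^n_{h,K})$ already carries a factor $h$, each such swap is again $O(h)$. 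This is both shorter and safer than your detour: there is no need to introduce or estimate a discrete mass-balance residual for $r$, and no risk of mismatched terms arising from the conservative/non-conservative juggling you describe. Your route can probably be made to work (all density factors are the bounded $r$, as you note), but the accounting is considerably heavier than necessary.
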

{ Before starting the proof  we recall an auxiliary algebraic inequality whose straightforward proof is left to the reader, and introduce some notations.}
\begin{lm}\label{LL1}
	Let $p$ satisfies assumptions (\ref{hypp}) and (\ref{pressure1}).
	Let $0<a<b<\infty$. Then there exists $c=c(a,b)>0$ such that for all $\vr\in [0,\infty)$ and $r\in [a,b]$ there holds
	\[
		E(\vr|r)\ge c(a,b)\Big(1_{R_+\setminus [a/2,2 b]}{ (\vr)}+\vr^\gamma 1_{R_+\setminus [a/2,2 b]}{ (\vr)}+ (\vr-r)^2 1_{[a/2,2 b]}{ (\vr)}\Big),
	\]
	where $E(\vr|r)$ is defined in (\ref{E}).
\end{lm}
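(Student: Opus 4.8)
The plan is to split the range of $\varrho$ into the essential region $[a/2,2b]$ and its complement, and within the complement to separate a bounded part from a neighbourhood of $+\infty$, treating each piece by an elementary argument. Throughout I exploit that, by (\ref{E}) and the strict convexity of $H$ noted just after it, $E(\varrho|r)$ is the Bregman distance of $H$, so $E(\varrho|r)\ge 0$ with equality only at $\varrho=r$, and that $(\varrho,r)\mapsto E(\varrho|r)$ is continuous on $[0,\infty)\times[a,b]$ (recall $H\in C([0,\infty))$ as used in the proof of Corollary \ref{Corollary1}, and $r\ge a>0$ keeps $H'(r)$ away from the singularity at $0$). I also record $H''(s)=p'(s)/s$ and $sH'(s)-H(s)=p(s)$.

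First, in the essential region $\varrho\in[a/2,2b]$ I would use the second order Taylor formula. Since $r\in[a,b]\subset(a/2,2b)$ and $\varrho\in[a/2,2b]$, any $\xi$ between $\varrho$ and $r$ lies in $[a/2,2b]$, and $E(\varrho|r)=\tfrac12 H''(\xi)(\varrho-r)^2$. By (\ref{hypp}) the function $H''(s)=p'(s)/s$ is continuous and strictly positive on the compact set $[a/2,2b]\subset(0,\infty)$, hence bounded below by some $\underline c=\underline c(a,b)>0$, which gives $E(\varrho|r)\ge \tfrac{\underline c}{2}(\varrho-r)^2$ and controls the term $(\varrho-r)^2\,1_{[a/2,2b]}(\varrho)$.

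Second, for the bounded part of the complement I would argue by compactness. Fix $R\ge\max(2b,1)$; on the compact set $\{(\varrho,r):\varrho\in[0,a/2]\cup[2b,R],\ r\in[a,b]\}$ one always has $\varrho\neq r$ (either $\varrho\le a/2<a\le r$ or $\varrho\ge 2b>b\ge r$), so the continuous function $E$ is strictly positive there and attains a positive minimum $m_R>0$. Since $1+\varrho^\gamma\le 1+R^\gamma$ on this set, this yields $E(\varrho|r)\ge \tfrac{m_R}{1+R^\gamma}(1+\varrho^\gamma)$, i.e.\ control of $(1+\varrho^\gamma)\,1_{\mathbb{R}_+\setminus[a/2,2b]}(\varrho)$ for $\varrho\le R$.

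Third, and this is the only genuinely analytic step, I would establish the coercivity of $H$ at infinity from (\ref{pressure1}). Since $H''(s)/s^{\gamma-2}=p'(s)/s^{\gamma-1}\to p_\infty>0$, we have $H''(s)\ge \tfrac{p_\infty}{2}s^{\gamma-2}$ for $s\ge\varrho_1$; integrating twice gives $\liminf_{\varrho\to\infty}H(\varrho)/\varrho^\gamma>0$ (for $\gamma>1$ one gets $H(\varrho)\ge c_0\varrho^\gamma$, and for the borderline $\gamma=1$ the logarithmic refinement $H(\varrho)\ge c_0\varrho\log\varrho$ still dominates $\varrho^\gamma=\varrho$), with $c_0=c_0(p_\infty,\gamma)>0$. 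Because $r\in[a,b]$ is bounded, $H'(r)(\varrho-r)+H(r)=O(\varrho)$ uniformly in $r$, so $E(\varrho|r)=H(\varrho)-H'(r)(\varrho-r)-H(r)\ge \tfrac{c_0}{2}\varrho^\gamma$ for all $\varrho\ge R_0$ and $r\in[a,b]$, where $R_0=R_0(a,b)\ge\max(2b,1)$ is chosen large enough to absorb the lower order terms; as $\varrho^\gamma\ge1$ there, this gives $E(\varrho|r)\ge\tfrac{c_0}{4}(1+\varrho^\gamma)$. Taking $R=R_0$ in the compactness step and setting $c(a,b)=\min\{\underline c/2,\ m_{R_0}/(1+R_0^\gamma),\ c_0/4\}>0$, the three bounds assemble into the claimed inequality. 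The main obstacle is making the large-$\varrho$ estimate uniform in $r$, which is resolved precisely by the boundedness of $r$ in $[a,b]$.
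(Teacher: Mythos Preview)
Your proof is correct. The paper does not provide a proof of this lemma, stating that its ``straightforward proof is left to the reader''; your three-region splitting (essential zone via the Taylor remainder and $H''=p'/s$, bounded residual via compactness and continuity of $E$ on $[0,\infty)\times[a,b]$, large $\varrho$ via the asymptotic coercivity from (\ref{pressure1})) is exactly the natural route and is consistent with how the lemma is used downstream.
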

{ If we take in Lemma \ref{LL1} $\vr=\vr^n(x)$, $\vr^n\in L^+_h(\Omega)$, $r=\hat r^n(x)$, $a=\underline r$, $b=\overline r$ (where r is a function belonging to class (\ref{dr,U}) and $\underline r$, $\overline r$
are its lower and upper bounds, respectively), we obtain
\begin{equation}\label{added}
E(\vr^n(x)|\hat r^n(x))\ge c(\underline r,\overline r)\Big(1_{R_+\setminus [\underline r/2,2 \overline r]}{ (\vr^n(x))}+(\vr^n)^\gamma(x) 1_{R_+\setminus [\underline r/2,2 \overline r]}{ (\vr^n(x))}+ (\vr^n(x)-\hat r^n(x))^2 1_{[\underline r/2,2 \overline r]}{ (\vr^n(x))}\Big)
\end{equation}
}
 Now,  for fixed numbers $\underline r$ and $\overline r$  { and  fixed functions $\vr^n\in L^+_h(\Omega)$, $n=0,\ldots, N$,  we introduce the residual and essential subsets  of $\Omega$ (relative to $\vr^n$) as follows:}
\begin{equation}\label{essres}
N_{\rm ess}^n=\{x\in\Omega\,\Big|\,\frac 12\underline r\le \vr^n(x)\le 2\overline r\},\;
N_{\rm res}^n= \Omega\setminus N_{\rm ess}^n.
\end{equation}
and we set
\[
[g]_{\rm ess}{ (x)}= g{ (x)} 1_{N^n_{\rm ess}}{ (x)},\; [g]_{\rm res}{ (x)}= g { (x)}1_{N^n_{\rm res}}{ (x)},\;\; { x\in \Omega},\;\;g\in L^1(\Omega).
\]

 Integrating inequality (\ref{added}) we deduce
\begin{equation}\label{rentropy}
c(\underline r,\overline r)\sum_{K\in{\cal T}} \int_K{\Big(\Big[1\Big]_{\rm res}+\Big[(\vr^n)^\gamma\Big]_{\rm res}+\Big[\vr^n-\hat r^n\Big]^2_{\rm ess}\Big)}\dx\le{\cal E}(\vr^n,\bu^n\Big| r^n,\bU^n).
\end{equation}
for any  pair $(r,\bU)$ belonging to the class (\ref{dr,U}) and any $\vr^n\in L_h(\Omega)$.

{ We are now ready to proceed to the proof of Lemma \ref{strongentropy}.}

\begin{proof}

We start by projecting the momentum equation to the discrete spaces.\label{7.2}
Since $(r,\bU)$ satisfies  \eqref{pbcont} and belongs to the class (\ref{dr,U}), Equation (\ref{mov2}) can be rewritten in the form
\begin{equation}\label{str1}
r\partial_t\bU+r\bU\cdot\nabla\bU +\nabla p(r)=\mu\Delta\bU +(\mu+\lambda)\nabla\dv\bU.
\end{equation}
We write equation (\ref{str1}) at $t=t_n$, multiply scalarly by $\bu^n-\bU^n_{h}$, and integrate over $\Omega$.
We get, after summation from $n=1$ to $m$,
\begin{equation}\label{strong0}
\begin{aligned}
& \sum_{i=1}^5{\cal T}_i=0, \qquad\qquad\qquad \mbox{ with }  && {\cal T}_1 = -\deltat\sum_{n=1}^m\int_\Omega\Big(\mu\Delta \bU^n+ (\mu+\lambda)
\nabla\dv\bU^n\Big)\cdot(\bu^n-\bU^n_h)\dx, &&\\
&{\cal T}_2 =\deltat\sum_{n=1}^m\int_\Omega r^{n}[\partial_t\bU]^n\cdot (\bu^n-\bU^n_h)\dx, && {\cal T}_3 = \deltat \sum_{n=1}^m\int_\Omega r^n\bU^n\cdot\nabla\bU^n\cdot (\bu^n-\bU^n_h)\dx \\
&{\cal T}_4 = \deltat\sum_{n=1}^m\int_\Omega\nabla p(r^n)\cdot\bu^n \dx, && {\cal T}_5 =-\deltat\sum_{n=1}^m\int_\Omega\nabla p(r^n)\cdot\bU^n_h\dx.
 \end{aligned}
\end{equation}

 In the steps below, we deal with each of the terms ${\cal T}_i$.

\vspace{2mm}
\textbf{Step 1: }\textit{Term ${\cal T}_1$.}\label{7.3}
 Integrating by parts, we get:
 {
\begin{equation}\label{cT1}
 \begin{aligned}
&{\cal T}_1={\cal T}_{1,1} + {\cal R}_{1,1},
\\
& \mbox{with }{\cal T}_{1,1} = \deltat\sum_{n=1}^m\sum_{K\in {\cal T}}\int_K\Big(\mu\nabla\bU_{h}^n:\nabla(\bu^n-\bU_h^n)+
(\mu +\lambda)\dv \bU_{ h}^n\dv (\bu^n-\bU_h^n)\Big)\dx
 \\
 &\mbox{and }\;
 {\cal R}_{1,1}=I_1+I_2,\;\mbox{with}\\
 &I_1=
 \deltat\sum_{n=1}^m\sum_{K\in {\cal T}}\int_K\Big(\mu\nabla(\bU^n-\bU_{ h}^n):\nabla(\bu^n-\bU_h^n)+ (\mu+\lambda)
 \dv((\bU^n-\bU_{ h}^n)\dv(\bu^n-\bU_h^n)\Big)
 \dx
 \\
 &
 I_2=-\deltat\sum_{n=1}^m \sum_{K\in{\cal T}}\sum_{\sigma\in {\cal E}(K)} \int_{\sigma} \Big(\mu\bn_{\sigma,K} \cdot\nabla\bU^n \cdot(\bu^n- \bU_h^n) + (\lambda+\mu)\dv\bU^n(\bu^n-\bU^n_h)\cdot\bn_{\sigma,K}\Big)\dS
 \\
 & =
 -\deltat\sum_{n=1}^m\ \sum_{\sigma\in {\cal E}} \int_{\sigma} \Big(\mu\bn_{\sigma} \cdot\nabla\bU^n \cdot\Big[\bu^n- \bU_h^n\Big]_{\sigma,\vc n_\sigma} + (\lambda+\mu)\dv\bU^n\Big[\bu^n-\bU^n_h\Big]_{\sigma,\vc n_\sigma}\cdot\bn_{\sigma}\Big)\dS,
\end{aligned}
\end{equation}
}
where in the last line $\vc n_\sigma$ is a unit  normal to $\sigma$ and $[\cdot]_{\sigma,\vc n_\sigma}$ is the jump over sigma (with respect to $\vc n_\sigma$) defined in Lemma \ref{Lemma6}. Employing estimate (\ref{L1-3})
we easily verify that
{
$$
|I_1|\le h\;c(M_0,E_0, \|\nabla\vc U\|_{L^\infty(0,T; W^{1,\infty}(\Omega))}).
$$
Since the integral over any face $\sigma$ of the jump of a function from $V_h(\Omega)$ is zero, we may write
$$
I_2=-\deltat\sum_{n=1}^m\ \sum_{\sigma\in {\cal E}} \int_{\sigma} \Big(\mu\bn_{\sigma} \cdot\Big(\nabla\bU^n -[\nabla\bU^n]_\sigma\Big)\cdot\Big[\bu^n- \bU_h^n\Big]_{\sigma,\vc n_\sigma}
$$
$$
+ (\lambda+\mu)\Big(\dv\bU^n-[\dv\bU^n]_\sigma\Big)\Big[\bu^n-\bU^n_h\Big]_{\sigma,\vc n_\sigma}\cdot\bn_{\sigma}\Big)\dS;
$$
whence by using the { first order Taylor formula applied to functions $x\mapsto \nabla \vc U^n(x)$ to evaluate the differences $\nabla\bU^n -[\nabla\bU^n]_\sigma$, $\dv\bU^n-[\dv\bU^n]_\sigma$, }
   and H\"older's inequality,
$$
\begin{aligned}
&|I_2| \le \deltat\, h\; c\,  \|\nabla^2\bU\|_{L^\infty(Q_T;\Rm^{27})} \sum_{n=1}^m\sum_{\sigma\in {\cal E}} \sqrt{|\sigma|}\sqrt h\;\Big(\frac 1{\sqrt h}\,\Big\|\Big[\bu^n-\bU_h^n\Big]_{\sigma,\vc n_\sigma}\Big\|_{L^2(\sigma;\Rm^3)}\Big)\\
&\le \deltat\, h\; c\,  \|\nabla^2\bU\|_{L^\infty(Q_T;\Rm^{27})} \sum_{n=1}^m\sum_{\sigma\in {\cal E}}\Big(|\sigma|h+
\frac 1h\,\Big\|\Big[\bu^n-\bU_h^n\Big]_{\sigma,\vc n_\sigma}\Big\|_{L^2(\sigma;\Rm^3)}^2\Big).
\end{aligned}
$$
Therefore,
\begin{equation}\label{cR1.1}
|{\cal R}_{1,1}|\le  h\, c(M_0, E_0,\|\vc U, \nabla\bU, \nabla^2\bU\|_{L^\infty(Q_T,\Rm^{39})}),
\end{equation}
where we have employed Lemma \ref{Lemma6} and
 (\ref{est0}) in Corollary \ref{Corollary1}.
}

\vspace{2mm}
\textbf{Step 2:}\textit{  Term  ${\cal T}_2$.}\label{7.4}
Let us now decompose the term  ${\cal T}_2$ as
\begin{align*}
	 & {\cal T}_2={\cal T}_{2,1}+ {\cal R}_{2,1},\\
	& \mbox{with } {\cal T}_{2,1}=\deltat\sum_{n=1}^m\sum_{K\in {\cal T}}\int_Kr^{n-1}\frac{\bU^n-\bU^{n-1}}{\deltat}\cdot (\bu^n-\bU^n_h)\dx,\quad {\cal R}_{2,1}=\deltat\sum_{n=1}^m\sum_{K\in {\cal T}}{\cal R}_{2,1}^{n,K}, \\
	& \mbox{and }{ {\cal R}_{2,1}^{n,K}=\int_K(r^n-r^{n-1})[\partial_t\vc U]^n\cdot(\bu^n-\bU^n_h)\dx }+ \int_Kr^{n-1}\Big([\partial_t \bU]^n-\frac{\bU^n-\bU^{n-1}}{\deltat}\Big) \cdot(\bu^n-\bU^n_h)\dx.
\end{align*}
The remainder ${\cal R}_{2,1}^{n,K}$ can be rewritten as follows
$$
{\cal R}_{2,1}^{n,K}=\int_K\Big[\int_{t_{n-1}}^{t_n}r(t,\cdot){\rm d}t\Big][\partial_t\vc U]^n\cdot(\bu^n-\bU^n_{h})\dx
 $$
 $$
 + \frac 1k\int_Kr^{n-1}\Big[\int_{t_{n-1}}^{t_n}\int_s^{t_n}\partial^2_t \bU(z,\cdot){\rm d}z{\rm d}s\Big] \cdot(\bu^n-\bU_{h}^n)\dx;
$$
{ whence,
\[
|{\cal R}_{2,1}^{n,K}|\le \deltat\Big[ (\|r\|_{L^\infty(Q_T)}+\|\partial_t r\|_{L^\infty(Q_T)}) (\|\partial_t\bU\|_{L^\infty(Q_T;\Rm^3)}|K|^{5/6}(\|\bu^n\|_{L^6(K)}+ \|\bU^n_h\|_{L^6(K)})
 \]
 \[
 +\|\partial^2_t \bU^n\|_{L^{6/5}(\Omega;\Rm^3))} (\|\bu^n\|_{L^6(K)}+ \|\bU^n_h\|_{L^6(K)}).
\]
Consequently, by the same token as in (\ref{R5.2}) or (\ref{R6.4}),
\begin{equation}\label{cR2.1}
|{\cal R}_{2,1}|\le \deltat\, c\Big(M_0, E_0,\overline r,\|(\partial_t r, \bU, \partial_t\bU, \nabla\bU , )\|_{L^\infty(Q_T;\Rm^{16})}, \|\partial^2_t \bU\|_{L^2(0,T; L^{6/5}(\Omega;\Rm^3))}\Big),
\end{equation}
 where we have used the H\"older and Young inequalities,  the estimates (\ref{L1-2}), (\ref{L1-4}), (\ref{L1-6}), (\ref{sob1}), and to the energy bound (\ref{est0}) from Corollary \ref{Corollary1}.
 }

\vspace{2mm}
{\bf Step 2a:} {\it Term ${\cal T}_{2,1}$.} We decompose the term ${\cal T}_{2,1}$ as
\begin{align*}
&{\cal T}_{2,1}={\cal T}_{2,2}+ {\cal R}_{2,2}, \\
&\mbox{with } {\cal T}_{2,2}=\deltat\sum_{n=1}^m\sum_{K\in {\cal T}}\int_Kr_K^{n-1}\frac{\bU^n-\bU^{n-1}}{\deltat}\cdot (\bu^{n}-\bU^{n}_h)\dx, \;
 {\cal R}_{2,2}=\deltat\sum_{n=1}^m\sum_{K\in {\cal T}}{\cal R}_{2,2}^{n,K}, \\
&\mbox{and }{\cal R}_{2,2}^{n,K}=\int_K(r^{n-1}-r_K^{n-1})\frac{\bU^n-\bU^{n-1}}{\deltat}\cdot(\bu^{n}-\bU_h^n)\dx;
\end{align*}
therefore,
\[
|{\cal R}_{2,2}^{n}|= |\sum_{K\in {\cal T}}{\cal R}_{2,2}^{n,K}| \le h \, c\|\nabla r\|_{L^\infty(Q_T;\Rm^3)}\|\partial_t\bU\|_{L^\infty(Q_T;\Rm^{3})}\|\bu^n-\bU^n_h\|_{L^6(\Omega;\Rm^3)}.
\]
Consequently, by virtue of formula (\ref{est1}) in Corollary \ref{Corollary1} and estimates (\ref{sob1}), (\ref{L1-5}),
\begin{equation}\label{cR2.2}
|{\cal R}_{2,2}|\le h \, c(M_0, E_0, \|(\nabla r, \bU,\partial_t\bU,\nabla\bU)\|_{L^\infty(Q_T;\Rm^{18})}).
\end{equation}

{
\vspace{2mm}
{\bf Step 2b:} {\it Term ${\cal T}_{2,2}$.} We decompose the term ${\cal T}_{2,2}$ as
\begin{align*}
&{\cal T}_{2,2}={\cal T}_{2,3}+ {\cal R}_{2,3}, \\
&\mbox{with } {\cal T}_{2,3}=\deltat\sum_{n=1}^m\sum_{K\in {\cal T}}\int_Kr_K^{n-1}\frac{\bU^n_{h,K}-\bU^{n-1}_{h,K}}{\deltat}\cdot (\bu^{n}-\bU^{n}_h)\dx, \;
 {\cal R}_{2,3}=\deltat\sum_{n=1}^m\sum_{K\in {\cal T}}{\cal R}_{2,3}^{n,K}, \\
&\mbox{and }{\cal R}_{2,3}^{n,K}=\int_K r_K^{n-1}\Big(\frac{\bU^n-\bU^{n-1}}{\deltat}-\Big[\frac{\bU^n-\bU^{n-1}}{\deltat}\Big]_h\Big)
\cdot(\bu^{n}-\bU_h^n)\dx\\ &
+
\int_K r_K^{n-1}\Big(\Big[\frac{\bU^n-\bU^{n-1}}{\deltat}\Big]_h-\Big[\frac{\bU^n-\bU^{n-1}}{\deltat}\Big]_{h,K}\Big)
\cdot(\bu^{n}-\bU_h^n)\dx { =I_1^K+I_2^K}.
\end{align*}
{ We have
$$
|I_2^K|=\frac 1 \deltat r_K^{n-1}\Big|\int_K \Big(\Big[\int_{t_{n-1}}^{t_n}\partial_t\bU(z){\rm d}z\Big]_h- \Big[\int_{t_{n-1}}^{t_n}\partial_t\bU(z){\rm d}z\Big]_{h,K}\Big)
\cdot(\bu^{n}-\bU_h^n)\dx\Big|
$$
$$
\le \frac h \deltat r_K^{n-1}\int_{t_{n-1}}^{t_n}\Big\|\Grad\Big[\partial_t\bU(z)\Big]_h\Big\|_{L^{6/5}(K;\Rm^3)}
\|\bu^n-\bU^n_h\|_{L^6(K;\Rm^3)}
$$
where we have used the Fubini theorem, H\"older's inequality and (\ref{L2-1}), (\ref{L1-3})$_{s=1}$.
Further, employing the Sobolev inequality on Crouzeix-Raviart space (\ref{sob1}), H\"older's and Young's inequalities and estimate (\ref{L1-3})$_{s=1}$, we get
$$
\sum_{K\in {\cal T}}|I_2^K|\le \frac h \deltat r_K^{n-1} \|\bu^n-\bU^n_h\|_{L^6(\Omega;\Rm^3)}\int_{t_{n-1}}^{t_n}\Big\|\Grad\partial_t\bU(z)\Big\|_{L^{6/5}(\Omega;\Rm^3)}
{\rm d}z
$$
$$
\le
c r_K^{n-1} \Big(h|\bu^n-\bU^n_h|^2_{V^2_h(\Omega;\Rm^3)}+\frac h \deltat \int_{t_{n-1}}^{t_n}
\Big\|\Grad\partial_t\bU(z)\Big\|^2_{L^{6/5}(\Omega;\Rm^3)}\Big)
$$
We reserve the similar treatment to the term $I_1^K$. Resuming these calculations we get by using Corollary  \ref{Corollary1}
%
%
\begin{equation}\label{cR2.3}
|{\cal R}_{2,3}|\le h \, c(M_0, E_0, \|(r, \bU,\nabla\bU, \partial_t\bU)\|_{L^\infty(Q_T;\Rm^{16})},\|\partial_t\nabla\bU\|_{L^2(0,T;L^{6/5}(\Omega;\Rm^{9}))}).
\end{equation}
}
}
{ \bf Step 2c:} {\it Term ${\cal T}_{2,3}$.}
{ We rewrite this term in the form
\begin{equation}\label{cT2}\begin{aligned}
& {\cal T}_{2,3}={\cal T}_{2,4}+ {\cal R}_{2,4},\; {\cal R}_{2,4}=\deltat\sum_{n=1}^m\sum_{K\in {\cal T}}{\cal R}_{2,4}^{n,K}, \\
&\mbox{with } {\cal T}_{2,4}=\deltat\sum_{n=1}^m\sum_{K\in {\cal T}} \int_Kr_K^{n-1} \frac {\bU^n_{h,K}-\bU^{n-1}_{h,K}} {\deltat}\cdot (\bu^{n}_K-\bU^{n}_{h,K})\dx,
\\ &\mbox{and }{\cal R}_{2,4}^{n,K}=\int_Kr_K^{n-1}\frac{\bU_{h,K}^n-\bU_{h,K}^{n-1}}{\deltat}\cdot\Big((\bu^{n}-\bu^n_K) -(\bU_h^n-\bU_{h,K}^n)\Big)\dx.
\end{aligned}
\end{equation}
{
First we write, as in (\ref{R2.1}),
$$
\Big|\frac{[{\bU}^{n}-{\bU}^{n-1}]_{h,K}}{\deltat}\Big|=
\Big|\frac 1{|K|}\int_K\Big[\frac 1\deltat \Big[\int_{t_{n-1}}^{t_n} \partial_t\vc U(z,x) {\rm d} z\Big]_h\Big]{\rm d}x\Big|
$$
$$
=\Big|\frac 1{|K|}\int_K\Big[\frac 1\deltat \int_{t_{n-1}}^{t_n} [\partial_t\vc U(z) \Big]_h(x){\rm d} z\Big]{\rm d}x\Big|\le \|[\partial_t\vc U ]_h\|_{L^\infty(0,T;L^\infty(\Omega;\Rm^3))}\le \|\partial_t\vc U \Big\|_{L^\infty(0,T;L^\infty(\Omega;\Rm^3))},
$$
Next we evaluate $\bu^n-\bu^n_K$ employing (\ref{L2-1})$_{p=2}$, and $\bU_h^n-\bU_{h,K}^n$ by using
(\ref{L2-1})$_{p=\infty}$, (\ref{L1-3})$_{s=1}$. Finally we employ  the H\"older inequality to get
\begin{equation}\label{cR2.4}
|{\cal R}_{2,4}|\le h \; c(M_0,E_0,\overline r, \|(\partial_t\bU, \bU, \nabla\bU)\|_{L^\infty(Q_T;\Rm^{15})}, \|\partial_t\nabla\bU )\|_{L^2(0,T;L^{6/5}(\Omega;\Rm^{9}))}).
\end{equation}
}
}

\vspace{2mm}
\textbf{Step 3:} \textit{ Term  ${\cal T}_3$.}
Let us first decompose ${\cal T}_3$ as
\begin{align*}
& {\cal T}_3={\cal T}_{3,1} + {\cal R}_{3,1}, \\
&\mbox{with } {\cal T}_{3,1}=\deltat\sum_{n=1}^m\sum_{K\in{\cal T}}\int_K r_K^n\bU_{h,K}^n\cdot\nabla\bU^n\cdot (\bu_K^n-\bU_{h,K}^n)\dx,
 \quad {\cal R}_{3,1}=\deltat\sum_{n=1}^m \sum_{K\in{\cal T}}{\cal R}_{3,1}^{n,K},
\\ &\mbox{and }{\cal R}_{3,1}^{n, K}=\int_K(r^n-r_K^n)\bU^n\cdot\nabla\bU^n\cdot(\bu^n-\bU^n_h)\dx
+ \int_K r_K^n(\bU^n-\bU^n_h)\cdot\nabla\bU^n\cdot(\bu^n-\bU^n_h)\dx \\
& \phantom{\mbox{and }{\cal R}_{3,1}^{n, K}=} +\int_K r_K^n(\bU_h^n-\bU^n_{h,K})\cdot\nabla\bU^n\cdot(\bu^n-\bU^n_h)\dx +\int_K r_K^n\bU^n_{h,K}\cdot\nabla\bU^n\cdot\Big(\bu^n-\bu^n_K-(\bU^n_h-\bU^n_{h,K})\Big)\dx.
\end{align*}
We find that
\begin{align*}
&|{\cal R}_{3,1}^{n,K}|  \le  h\,\Big[ |K|^{1/2} (\|\bu^n\|_{L^2(K;\Rm^3)}+ \|\bU_h^n\|_{L^2(K;\Rm^3)})+|K|^{ 1/2} (\|\nabla\bu^n\|_{L^2(K;\Rm^3)}+ \|\nabla\bU_h^n\|_{L^2(K;\Rm^3)})\Big]
\\ & \qquad\qquad\qquad\qquad\qquad \times
\Big(\|r\|_{L^\infty(Q_T)}+\|\nabla r\|_{L^\infty(Q_T;\Rm^3)}\Big)\,\Big(\|\bU\|_{L^\infty(Q_T;\Rm^3)} +\|\nabla\bU\|_{L^\infty(Q_T;\Rm^{9})}\Big)^2,
\end{align*}
where we have used several times H\"older's  inequality and the standard first order Taylor formula ({ to evaluate
$r^n-r^n_K$)}, along with the estimates
(\ref{L1-2}) (to evaluate $\bU^n-\bU^n_h$), (\ref{L2-1}), (\ref{L1-3})$_{s=1}$ (to evaluate
$\bU^n_h-\bU^n_{h,K}$), (\ref{L2-1}) (to evaluate $\bu^n-\bu^n_K$).

Consequently, using again (\ref{L1-3})$_{s=1}$ (to estimate $\|\nabla\bU_h^n\|_{L^2(K;\Rm^3)}$), the definition
 of $ |\cdot|_{V^2_h(\Omega)}$ norm, the Sobolev inequality (\ref{sob1}) and the energy
bound (\ref{est0}) from  Corollary \ref{Corollary1}, we conclude that
\begin{equation}\label{cR3.1}
|{\cal R}_{3,1}|\le h\; c(M_0, E_0,\overline r, \|(\nabla r, \bU, \nabla\bU)\|_{L^\infty(Q_T;\Rm^{15})}).
\end{equation}
%
%

Now we shall deal wit term ${\cal T}_{3,1}$. Integrating by parts, we get:

\begin{align*}
	\int_K r_K^n\bU_{h,K}^n\cdot\nabla\bU^n\cdot (\bu_K^n-\bU_{h,K}^n)\dx &=\sum_{\sigma\in {\cal E}(K)}{ |\sigma|} r_K^n  [\bU_{h,K}^n\cdot{\vc n}_{\sigma,K}]\bU_\sigma^n\cdot(\bu_K^n-\bU_{h,K}^n)
	\\
	&=\sum_{\sigma\in {\cal E}(K)}|\sigma| r_K^n [\bU_{h,K}^n\cdot{\vc n}_{\sigma,K}](\bU_\sigma^n-\bU^n_{h,K})\cdot(\bu_K^n-\bU_{h,K}^n),
\end{align*}
thanks to the the fact that $\sum_{\sigma\in {\cal E}(K)}\int_\sigma \bU_{h,K}^n\cdot{\vc n}_{\sigma,K}{\rm d} S=0$.

{ Next we write
\begin{align*}
& {\cal T}_{3,1}= {\cal T}_{3,2}+ {\cal R}_{3,2},  \quad {\cal R}_{3,2}=\deltat\sum_{n=1}^m {\cal R}_{3,2}^{n},
\end{align*}
\begin{align}
&
{\cal T}_{3,2}=\deltat\sum_{n=1}^m\sum_{K\in{\cal T}}\sum_{\sigma\in {\cal E}(K)}|\sigma| \hat r_\sigma^{n,{\rm up}}
[\hat \bU_{h,\sigma}^{n,{\rm up}}\cdot{\vc n}_{\sigma,K}](\bU_\sigma^n-\bU^n_{h,K})\cdot(\hat\bu_\sigma^{n,{\rm up}}-\hat\bU_{h,\sigma}^{n,{\rm up}}), \label{cT3}
\end{align}
\begin{align*}
&  \mbox{and }{\cal R}_{3,2}^{n}=\sum_{K\in {\cal T}}\sum_{\sigma\in {\cal E}(K)}|\sigma|(r_K^n-\hat r^{n,{\rm up}}_\sigma) [\bU_{h,K}^n\cdot{\vc n}_{\sigma,K}](\bU_\sigma^n-\bU^n_{h,K})\cdot(\bu_K^n-\bU^n_{h,K})
\\
&+
\sum_{K\in {\cal T}}\sum_{\sigma\in {\cal E}(K)}|\sigma|\hat r^{n,{\rm up}}_\sigma \Big[\Big(\bU_{h,K}^n-\hat \bU_{h,\sigma}^{n,{\rm up}}\Big)\cdot{\vc n}_{\sigma,K}\Big](\bU_\sigma^n-\bU^n_{h,K})\cdot(\bu_K^n-\bU^n_{h,K})\\
&
+\sum_{K\in {\cal T}}\sum_{\sigma\in {\cal E}(K)}|\sigma|\hat r^{n,{\rm up}}_\sigma [\hat \bU_{h,\sigma}^{n,{\rm up}}\cdot{\vc n}_{\sigma,K}](\bU_\sigma^n-\bU^n_{h,K})
\cdot\Big((\bu_K^n -\hat\bu_{h,\sigma}^{n,{\rm up}}) -(\bU^n_{h,K}-\hat\bU_{h,\sigma}^{n,{\rm up}})\Big).
\end{align*}
We may use several times the { Taylor formula (in order to estimate $r_K^n-\hat r_\sigma^{n,{\rm up}}$,
 $\vc U_\sigma^n-{\vc U}_{h,K}^n$, $\vc U_{h,K}^n-\hat{\vc U}_{h,\sigma}^{n,{\rm up}}$)} to get the bound
$$
|{\cal R}_{3,2}^n|\le h\, c \|r\|_{W^{1,\infty}(\Omega)}\Big(1+ \|\bU\|_{W^{1,\infty}(\Omega;\Rm^3)}\Big)^3
\sum_{K\in {\cal T}} h|\sigma||\bu^n_K|
$$
$$
+ c \|r\|_{W^{1,\infty}(\Omega)}\Big(1+ \|\bU\|_{W^{1,\infty}(\Omega;\Rm^3)}\Big)^2\sum_{K\in{\cal T}}
\sum_{\sigma\in {\cal E}(K)} h|\sigma| |\bu_K^n-\bu_\sigma^n|,
$$
where by virtue of H\"older's inequality, (\ref{sob4}), (\ref{sob3}), (\ref{L2+-1}) (\ref{L2+-2}),
$$
\sum_{K\in {\cal T}} h|\sigma||\bu^n_K|\le c\Big(\sum_{\sigma\in {\cal T}} h|\sigma||\bu^n_K|^6\Big)^{1/6}
\le c\Big[\Big(\sum_{K\in {\cal T}}\|\bu^n-\bu_K^n\|^6_{L^6(K;\Rm^3)}\Big)^{1/6}
$$
$$
+\Big(\sum_{K\in {\cal T}}\|\bu^n\|^6_{L^6(K;\Rm^3)}\Big)^{1/6}\Big]
\le c\Big(\sum_ {K\in {\cal T}}\|\nabla\bu_n\|^2_{L^2(K;\Rm^9)}\Big)^{1/2},
$$
$$
\sum_{K\in{\cal T}}
\sum_{\sigma\in {\cal E}(K)} h|\sigma| |\bu_K^n-\bu_\sigma^n|\le c\Big[\Big(\sum_{K\in {\cal T}}\|\bu^n-\bu_K^n\|^2_{L^2(K;\Rm^3)}\Big)^{1/2}
$$
$$
+ \Big(\sum_{K\in {\cal T}}\sum_{\sigma\in {\cal E}(K)}\|\bu^n-\bu_\sigma^n\|^2_{L^2(K;\Rm^3)}\Big)^{1/2}\Big]
\le h\,c\Big(\sum_{K\in {\cal T}} \|\nabla\bu_n\|^2_{L^2(K;\Rm^9)}\Big)^{1/2},
$$
Consequently, we may use (\ref{est0}) to conclude
\begin{equation}\label{cR3.2}
|{\cal R}_{3,2}|\le h\, c\Big(M_0, E_0, \overline r, \|\nabla r, \bU, \nabla\bU\|_{L^{\infty}(Q_T;\Rm^{15})}\Big).
\end{equation}
}

\vspace{2mm}
\textbf{Step 4:}\textit{ Terms  ${\cal T}_4$ and ${\cal T}_5$.} \label{7.6}
We decompose ${\cal T}_4$ as

\begin{equation}\label{cT4}
\begin{aligned}
	& {\cal T}_4= {\cal T}_{4,1}+ {\cal R}_{4,1},
	\\
	& \mbox{with }{\cal T}_{4,1}=\deltat \sum_{n=1}^m \sum_{K\in{\cal T}}\int_K p'(r^n_K)\bu^n\cdot\nabla r^n\dx,
\quad
{\cal R}_{4,1}= \deltat \sum_{n=1}^m \sum_{K\in{\cal T}}\int_K \Big( p'(r^n)-p'(r^n_K)\Big)\bu^n\cdot\nabla r^n\dx;
\end{aligned}
\end{equation}
whence
\begin{equation}\label{cR4.1}
	|{\cal R}_{4,1}|\le h\, c(M_0, E_0,\underline r,\overline r, |p'|_{C^1([\underline r,\overline r])}, \|\nabla r\|_{L^\infty(Q_T;\Rm^3)}).
\end{equation}

Employing integration by parts, we infer

\begin{equation}\label{cT5}
\begin{aligned}
	& {\cal T}_5= {\cal T}_{5,1} + {\cal R}_{5,1},  \mbox{ with }{\cal T}_{5,1}=
-\deltat \sum_{n=1}^m \sum_{K\in {\cal T}}\int_K \nabla p(r^n)\cdot  \bU^n\dx,
	\\ &
{\cal R}_{5,1}=\deltat \sum_{n=1}^m \sum_{K\in {\cal T}}\int_K\nabla p(r^n)\cdot\Big(\bU^n-\bU^n_h\Big)\dx
\end{aligned}
\end{equation}
and
\begin{equation}\label{cR5.1}
|{\cal R}_{5,1}|\le h\, c(\overline r, | p'|_{C([\underline r,\overline r])},\|{ \nabla r},\nabla \bU\|_{L^\infty(Q_T;\Rm^{12})}).
\end{equation}
\label{7.7}
Integrating by parts, we obtain
$$
{\cal T}_{5,1}= \deltat \sum_{n=1}^m \sum_{K\in {\cal T}}\int_K p(r^n) \dv \bU^n\dx;
$$
whence
$$
\begin{aligned}
	& {\cal T}_{5,1}= {\cal T}_{5,2} + {\cal R}_{5,2},  \mbox{ with }{\cal T}_{5,2}=
\deltat \sum_{n=1}^m \sum_{K\in {\cal T}}\int_K p(r_K^n)\dv \bU^n\dx,
	\\ &
{\cal R}_{5,2}=-\deltat \sum_{n=1}^m \sum_{K\in {\cal T}}\int_K(p(r_K^n)-p(r^n))\dv\bU^n\dx
\end{aligned}
$$
and
\begin{equation}\label{cR5.2}
|{\cal R}_{5,2}|\le h\, c(\overline r, |p'|_{[\underline r,\overline r]},\|\nabla r,\nabla\bU\|_{L^\infty(Q_T;\Rm^{12})}).
\end{equation}

Gathering the formulae (\ref{cT1}), (\ref{cT2}), (\ref{cT3}), (\ref{cT4}), (\ref{cT5}) and estimates for the residual terms (\ref{cR1.1}), (\ref{cR2.1}--\ref{cR2.4}), (\ref{cR3.1}--\ref{cR3.2}), (\ref{cR4.1}), (\ref{cR5.1}), (\ref{cR5.2}) concludes the proof of Lemma \ref{strongentropy}.
\end{proof}

\section{{ End}  of the proof of  the error estimate (Theorem \ref{Main})}

{ In this Section we put together the relative energy inequality (\ref{relativeenergy-}) and the identity (\ref{strong1}) derived in the previous section. The final inequality resulting from this manipulation is formulated in the following lemma.}
\begin{lm}\label{Gronwall}
Under assumptions of Theorem \ref{Main} there exists a positive number
\[
  c=c\Big(M_0, E_0, \underline r,\overline r, |p'|_{C^1([\underline r,\overline r])},
  \|(\nabla r, \partial_t r, \partial_t\nabla r, \partial^2_t r,  \bU, \nabla\bU, { \nabla^2\bU}, \partial_t\bU, \partial_t\nabla\bU)\|_{L^\infty(Q_T;\Rm^{65})}\Big)
\]
(depending tacitly also on $T$, $\theta_0$, $\gamma$, ${\rm diam} (\Omega)$, $|\Omega|$),
such that for all $m=1,\ldots,N,$ there holds:
\[{\cal E}(\vr^m,\bu^m|r^m,\bu^m)
{ +\deltat\frac \mu 2\sum_{n=1}^m\sum_{{K}\in {\cal T}}\int_K|\Grad(\vc u^n-\vc U^n_h)|^2{\rm d x}}
\]
\[
\le c\Big[h^A+\sqrt{\deltat} + {\cal E}(\vr^0,\bu^0|r^0,\bU^0)\Big] + c\,\deltat\sum_{n=1}^m {\cal E}(\vr^n,\bu^n|r^n,\bu^n),
\]
where $A$ is defined in (\ref{A1}).
\end{lm}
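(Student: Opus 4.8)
The plan is to superpose the approximate relative energy inequality (\ref{relativeenergy-}) of Lemma \ref{refrelenergy} and the discrete identity (\ref{strong1}) of Lemma \ref{strongentropy}, both applied to the same strong solution $(r,\bU)$. Since (\ref{strong1}) is an equality with vanishing right-hand side, I would add it term by term to the right-hand side of (\ref{relativeenergy-}). The two ``viscous'' principal terms then cancel exactly: the term $\deltat\sum_{n=1}^m\sum_{K\in{\cal T}}\big(\mu\int_K\Grad\bU^n_h:\Grad(\bU^n_h-\bu^n)\dx+(\mu+\lambda)\int_K\dv\bU^n_h\,\dv(\bU^n_h-\bu^n)\dx\big)$ appearing on the right of (\ref{relativeenergy-}) is precisely the opposite of the first line of (\ref{strong1}). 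After this cancellation the left-hand side retains the full viscous dissipation $\deltat\sum_{n=1}^m\sum_{K\in{\cal T}}\big(\mu\int_K|\Grad(\bu^n-\bU^n_h)|^2\dx+(\mu+\lambda)\int_K|\dv(\bu^n-\bU^n_h)|^2\dx\big)$, of which the argument will only need $\tfrac\mu2$, the remainder serving as a reservoir for the Young inequalities below (this is what produces the factor $\mu/2$ in the statement).

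What survives on the right are three families of ``cross'' terms. Pairing the time-derivative term of (\ref{relativeenergy-}) with the second line of (\ref{strong1}) gives $\deltat\sum_{n=1}^m\sum_{K\in{\cal T}}|K|(\vr_K^{n-1}-r_K^{n-1})\frac{\bU^n_{h,K}-\bU^{n-1}_{h,K}}{\deltat}\cdot(\bU^n_{h,K}-\bu^n_K)$, where the discrete time increment is bounded by $\|\partial_t\bU\|_{L^\infty}$ as in (\ref{R2.1}). Pairing the face upwind term $T_{3,3}$ of (\ref{relativeenergy-}) with the third line of (\ref{strong1}) gives the analogous face expression with $\vr_\sigma^{n,{\rm up}}-\hat r_\sigma^{n,{\rm up}}$ in place of the density and an extra factor $\bU^n_\sigma-\bU^n_{h,K}=O(h)$. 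Finally the pressure terms $T_4$, $T_{5,2}$, $T_{6,4}$ of (\ref{relativeenergy-}) combine with the last two lines of (\ref{strong1}); using $rH''(r)=p'(r)$, hence $\nabla H'(r)=\frac{p'(r)}{r}\nabla r$ and $\partial_t H'(r)=\frac{p'(r)}{r}\partial_t r$, together with the continuity equation $\partial_t r+\bU\cdot\nabla r+r\dv\bU=0$ satisfied by $(r,\bU)$, these reorganize — modulo $O(h)$ remainders from replacing the smooth quantities $\bU^n$, $r^n$, $\nabla r^n$ by their cell values and $\bU^n$ by $\bU^n_h$ — into the ``relative pressure'' term $-\deltat\sum_{n=1}^m\sum_{K\in{\cal T}}\int_K\big(p(\vr_K^n)-p(r_K^n)-p'(r_K^n)(\vr_K^n-r_K^n)\big)\dv\bU^n\dx$ plus a contribution of the form $\deltat\sum_{n=1}^m\sum_{K\in{\cal T}}\int_K\frac{r_K^n-\vr_K^n}{r_K^n}p'(r_K^n)(\bu^n-\bU^n)\cdot\nabla r^n\dx$.

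Each of these cross terms I would estimate by $c\,\deltat\sum_{n=1}^m{\cal E}(\vr^n,\bu^n|r^n,\bU^n)$, up to a small multiple of the dissipation and a vanishing residual, via the essential/residual splitting of Lemma \ref{LL1}. On $N^n_{\rm ess}=\{\underline r/2\le\vr^n\le2\overline r\}$ one uses $(\vr^n-r^n)^2\le cE(\vr^n|r^n)$ and $|p(\vr^n)-p(r^n)-p'(r^n)(\vr^n-r^n)|\le cE(\vr^n|r^n)$; on the residual set one uses $1+(\vr^n)^\gamma\le cE(\vr^n|r^n)$, hence in particular $|N^n_{\rm res}|\le c\,{\cal E}(\vr^n,\bu^n|r^n,\bU^n)$ by (\ref{rentropy}). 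The velocity differences are handled by Young's inequality: the factor multiplying $\vr^n$ (or the density difference on $N^n_{\rm ess}$) feeds the relative energy, the square of the velocity difference is partly absorbed into the dissipation thanks to $\|\bu^n-\bU^n_h\|_{L^2}^2\le c\,|\bu^n-\bU^n_h|^2_{V^2_h}$ (the Sobolev inequality (\ref{sob1}) and boundedness of $\Omega$), and the velocity differences restricted to $N^n_{\rm res}$ are controlled by $\|\bu^n\|_{L^6}\le c$, $\|\bU^n_h\|_{L^6}\le c$ from Corollary \ref{Corollary1} combined with the smallness of $|N^n_{\rm res}|$. The face convective cross term I would treat exactly as $R_{6,1}$ in the proof of Lemma \ref{refrelenergy}: use $|\bU^n_\sigma-\bU^n_{h,K}|\le ch$ and, according to the dichotomy $\{\overline\vr^n_\sigma<1\}$ vs.\ $\{\overline\vr^n_\sigma\ge1\}$, the numerical diffusion estimates (\ref{dissipative1})--(\ref{dissipative2}) of Lemma \ref{Lemma5}. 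The time-index shifts $n\mapsto n-1$ in the density arguments are removed by $\deltat\sum_{n=1}^m{\cal E}(\vr^{n-1},\bu^{n-1}|r^{n-1},\bU^{n-1})\le c\,\deltat\,{\cal E}(\vr^0,\bu^0|r^0,\bU^0)+\deltat\sum_{n=1}^m{\cal E}(\vr^n,\bu^n|r^n,\bU^n)$, using that ${\cal E}(\vr^0,\bu^0|r^0,\bU^0)$ is bounded (Corollary \ref{Corollary1}), so the extra piece is $\le c\deltat\le c\sqrt\deltat$.

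Collecting everything, the genuine residuals are $R^m_{h,\deltat}$ from Lemma \ref{refrelenergy} ($\le c(\sqrt\deltat+h^A)$, with $A$ as in (\ref{A1})), ${\cal R}^m_{h,\deltat}$ from Lemma \ref{strongentropy} ($\le c(h+\deltat)$), and the various $O(h)$ remainders produced in the reorganization above; since $0\le A\le\tfrac12$ and $h,\deltat<1$, all of these are $\le c(h^A+\sqrt\deltat)$. Adding the $G^m$ term ($|G^m|\le c\,\deltat\sum_{n=1}^m{\cal E}(\vr^n,\bu^n|r^n,\bU^n)$) and the initial relative energy then yields exactly the asserted inequality. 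I expect the main obstacle to be the bookkeeping of these cross terms and, above all, the residual-set estimates: one must check that each density-difference/velocity-difference product can be split so that its ``bad'' large-density or small-density part is dominated uniformly by $E(\vr^n|r^n)$, that every velocity factor either enters the relative energy directly or is partially absorbable into $\tfrac\mu2$ of the viscous dissipation, and that the replacement of face and cell averages, and of the smooth test quantities by their piecewise-constant counterparts, only costs $O(h)$ (or $O(h^A)$); the pressure reorganization via $rH''=p'$ and the discrete use of the continuity equation for $(r,\bU)$ is the delicate algebraic step where most of those remainders originate.
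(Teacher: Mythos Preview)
Your overall architecture is right and matches the paper: superpose (\ref{relativeenergy-}) with (\ref{strong1}), cancel the viscous test terms, and regroup the survivors into the three cross terms you identify (which the paper calls ${\cal P}_1$, ${\cal P}_2$, ${\cal P}_3$), then estimate each by $c\,\deltat\sum_n{\cal E}$ plus an absorbable fraction of the dissipation plus residuals of size $h^A+\sqrt\deltat$. Two of your tactical choices, however, would not close as written.

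First, for the face convective cross term (your analogue of ${\cal P}_2$), you propose to ``treat it exactly as $R_{6,1}$'' using the dissipation estimates (\ref{dissipative1})--(\ref{dissipative2}). That does not apply here: $R_{6,1}$ contains the \emph{density jump} $\vr_\sigma^{n,{\rm up}}-\vr_K^n$, which is precisely what Lemma~\ref{Lemma5} controls, whereas ${\cal P}_2$ contains $\vr_\sigma^{n,{\rm up}}-\hat r_\sigma^{n,{\rm up}}$, the difference between the discrete density and the smooth test density. No numerical-diffusion bound is available for that quantity. The paper handles ${\cal P}_2$ exactly as ${\cal P}_1$: use $|\bU^n_\sigma-\bU^n_{h,K}|\le ch$ to recover a volume sum $\sum_{K,\sigma}|\sigma|h(\dots)$, then split $\vr-\hat r$ via Lemma~\ref{LL1} (essential: $|\vr-r|\le cE^{1/2}$; residual, using $\gamma\ge 3/2$: $|\vr-r|^{3/2}\,1_{\rm res}\le cE$), apply H\"older, and absorb the resulting $L^6$ norm of $\hat\bu^{n,{\rm up}}-\hat\bU^{n,{\rm up}}_{h}$ into the dissipation via Young and (\ref{sob4}).

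Second, your residual-set mechanism for the velocity factor is not strong enough. You propose to control $|\bU^n_{h,K}-\bu^n_K|$ on $N^n_{\rm res}$ by the \emph{a priori} bound $\|\bu^n\|_{L^6}\le c$ combined with $|N^n_{\rm res}|\le c\,{\cal E}$. That produces a term of order ${\cal E}^{\alpha}$ with $\alpha<1$, which cannot be fed into a discrete Gronwall (you would get an unacceptable additive constant). The paper instead bounds the $L^6$ norm of the cell-averaged velocity difference by the dissipation, via
\[
\Big(\sum_{K}\|\bU^n_{h,K}-\bu^n_K\|_{L^6(K)}^6\Big)^{1/6}\le c\,|\bu^n-\bU^n_h|_{V^2_h(\Omega)}
\]
(use (\ref{sob4-}) and (\ref{sob1})), so that H\"older gives $({\cal E}^{1/2}+{\cal E}^{5/6})\cdot|\bu^n-\bU^n_h|_{V^2_h}$; then invoke the uniform bound (\ref{est4}) to reduce ${\cal E}^{5/6}$ to ${\cal E}^{1/2}$ and apply Young with $\delta$ small. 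This is what supplies the $\delta\,\deltat\sum_n|\bu^n-\bU^n_h|^2_{V^2_h}$ terms and explains the $\mu/2$ in the statement. Once you make these two corrections, your outline coincides with the paper's proof.
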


\begin{proof}
Gathering the formulae (\ref{relativeenergy-}) and (\ref{strong1}), one gets
\begin{equation}\label{relativeenergy-1}
{\cal E}(\vr^m,\bu^m\Big| r^m, U^m)- {\cal E}(\vr^0,\bu^0\Big|r(0),\bU(0)) + \mu\deltat\sum_{n=1}^m\Big|\bu^n-\bU^n_h\Big|^2_{V^2_h(\Omega;\Rm^3)}
\end{equation}
$$
\le{\cal P}_1+ {\cal P}_2 +{\cal P}_3 + {\cal Q}
$$
where
\begin{align*}
&{\cal P}_1=
\deltat\sum_{n=1}^m\sum_{K\in{\cal T}}|K|(\vr_K^{n-1}-r^{n-1}_K)\frac{{\bU}_{h,K}^{n}-{\bU}_{h,K}^{n-1}}{\deltat}\cdot \Big({\bU}_{h,K}^{n}   - \bu_K^{n}\Big),
\\
&{ {\cal P}_2=\deltat\sum_{n=1}^m\sum_{K\in{\cal T}}\sum_{\sigma=K|L\in {\cal E}(K)}|\sigma|\Big(\vr_\sigma^{n,{\rm up}}-\hat r_\sigma^{n,{\rm up}}\Big)\Big({\hat\bU}^{n,{\rm up}}_{h,\sigma}-
{\hat\bu}^{n,{\rm up}}_\sigma\Big)\cdot\Big(\bU^n_\sigma-\bU^n_{h,K}\Big) \hat \bU_{h,\sigma}^{n, {\rm up}}\cdot\bn_{\sigma,K},}
\\
&{\cal P}_3=
\deltat \sum_{n=1}^m\sum_{K\in {\cal T}} \int_K \Big(p(r_K^n)-p(\vr_K^n)\Big)\dv\bU^n\dx
\\
&\qquad \qquad+\deltat\sum_{n=1}^m\sum_{K \in {\cal T}}\Big[\int_K\frac{r_K^n-\vr^n_K}{r^n_K} p'(r^n_K) \bu^n\cdot\nabla r^n\dx
+\int_K \frac{ r^n_K-\vr^n_K} {r_K^n}p'(r_K^n) [\partial_t r]^n\dx\Big],
\\
&{ {\cal Q}=
{\cal R}^m_{h,\deltat} +R^m_{h,\deltat}  +{ G}^m. }
\end{align*}

Now, we estimate conveniently the terms ${\cal P}_1$, ${\cal P}_2$, ${\cal P}_3$ in three steps.

\vspace{2mm}

{\bf Step 1:} {\it Term ${\cal P}_1$.}
{ { We have
$$
\Big|{\bU}_{h,K}^{n}-{\bU}_{h,K}^{n-1}\Big| \le \int_{t_{n-1}}^{t_n}\|[\partial_t\bU(z)]_h\|_{L^\infty(K;\Rm^9)}{\rm d z}
\le c \int_{t_{n-1}}^{t_n}\|\partial_t\bU(z)\|_{L^\infty(K;\Rm^9)}{\rm d z},
$$
where we have used (\ref{ddd}).

According to Lemma \ref{LL1},
$$
|\vr-r|^\gamma 1_{R_+\setminus[\underline r/2,2\overline r]}(\vr)\le c(p) E^p(\vr|r)
$$
with any $p\ge 1$. In particular, 
\begin{equation}\label{aaa}
|\vr-r|^{6/5} 1_{R_+\setminus[\underline r/2,2\overline r]}(\vr)\le c E(\vr|r)
\end{equation}
provided $\gamma\ge 6/5$.

We get by using the H\"older inequality,
$$
\Big|\sum_{K\in{\cal T}}|K|(\vr_K^{n-1}-r^{n-1}_K)\frac{{\bU}_{h,K}^{n}-{\bU}_{h,K}^{n-1}}{\deltat}\cdot \Big({\bU}_{h,K}^{n}   - \bu_K^{n}\Big)\Big|\le c\Big(\|\partial_t\bU \|_{L^\infty(Q_T;\Rm^3)}+\|\partial_t\nabla\bU \|_{L^\infty(Q_T;\Rm^9)}\Big)\times
$$
$$
\Big[\Big(\sum_{K\in{\cal T}}|K||\vr^{n-1}_K-r^{n-1}_K|^2 1_{[\underline r/2,2\overline r]}(\vr_K)\Big)^{1/2}
+
\Big(\sum_{K\in{\cal T}}|K||\vr^{n-1}_K-r^{n-1}_K|^{6/5} 1_{R_+\setminus [\underline r/2,2\overline r]}(\vr_K)\Big)^{5/6}\Big]
\times
$$
$$
\Big(\sum_{K\in{\cal T}}|K| \Big|{\bU}_{h,K}^{n}   - \bu_K^{n}\Big|^6\Big)^{1/6}
\le c(\|(\partial_t\bU,\partial_t\nabla\bU)\|_{L^\infty(Q_T;\Rm^{12})})\Big({\cal E}^{1/2}(\vr^{n-1},\bu^{n-1}|r^{n-1},\bU^{n-1})
$$
$$
+
{\cal E}^{5/6}(\vr^{n-1},\bu^{n-1}|r^{n-1},\bU^{n-1})\Big)\,\Big(\sum_{K\in {\cal T}}\| {\bU}_{h,K}^{n}   - \bu_K^{n}\|_{L^6(K;\Rm^3)}^6\Big)^{1/6},
$$
where we have used (\ref{aaa}) and estimate (\ref{est4}) to obtain the last line.

  Now, by the Minkowski inequality,
  }
$$
\Big(\sum_{K\in {\cal T}}\| {\bU}_{h,K}^{n}   - \bu_K^{n}\|_{L^6(K;\Rm^3)}^6\Big)^{1/6}\le
\Big(\sum_{K\in {\cal T}}\| ({\bU}_{h,K}^{n}   - \bu_K^{n})-(\bU_h^n-\bu^{n})\|_{L^6(K;\Rm^3)}^6\Big)^{1/6}
$$
$$
+
\|\bU_h^n-\bu^{n}\|_{L^6(\Omega;\Rm^3)}\le c \Big|\bu^n-\bU^n_h\Big|_{V^2_h(\Omega;\Rm^3)},
$$
where we have used estimate (\ref{sob4-}) and the Sobolev inequality (\ref{sob1}). Finally, employing
Young's inequality, and estimate (\ref{est4}), we arrive at
\begin{multline}\label{cP1}
|{\cal P}_1|  \le \;  c({ \delta},M_0,E_0,\underline r,\overline r,\|(\bU,\nabla\bU,\partial_t\bU,\partial_t\nabla\bU)\|_{L^\infty(Q_T,\Rm^{15})} )
\\
\times \Big(\deltat {\cal E}(\vr^0,\bu^0|r^0,\bU^0)+ \deltat\sum_{n=1}^m{\cal E}(\vr^n,\bu^n|r^n,\bU^n)\Big)
+ \delta \deltat\sum_{n=1}^m\Big|\bu^n-\bU^n_h\Big|^2_{V^2_h(\Omega;\Rm^3)}
\end{multline}
with any $\delta>0$.
}

{\bf Step 2:} {\it Term ${\cal P}_2$.}
{ We write ${\cal P}_2=\deltat \sum_{n=1}^m{\cal P}_2^n$ where Lemma \ref{LL1} and the H\"older inequality yield, similarly as in the previous step,
\begin{equation*}
\begin{aligned}
|{\cal P}^n_2| & \le c(\underline r,\overline r, \|\nabla\bU\|_{L^\infty(Q_T;\Rm^9)}) \times
\\ &
\sum_{K\in {\cal T}}\sum_{\sigma\in {\cal E}(K)}|\sigma| h
\Big(E^{1/2}(\vr_\sigma^{n,{\rm up}},\hat r_\sigma^{n,{\rm up}})+ E^{2/3}(\vr_\sigma^{n,{\rm up}},\hat r_\sigma^{n,{\rm up}}\Big)\,|\hat\bU^{n,{\rm up}}_{h,\sigma}|\,|{\hat\bU}_{h,\sigma}^{n,{\rm up}}   -\hat\bu_\sigma^{n,{\rm up}}|
\\
& \le c(\underline r,\overline r, \|(\bU,\nabla\bU)\|_{L^\infty(Q_T;\Rm^{12})})\Big[\Big(\sum_{K\in {\cal T}}\sum_{\sigma\in {\cal E}(K)}|\sigma| h \Big(E(\vr_\sigma^{n,{\rm up}}|\hat r_\sigma^{n,{\rm up}})\Big)^{1/2}
\\
&
+
\Big(\sum_{K\in {\cal T}}\sum_{\sigma\in {\cal E}(K)}|\sigma| h
E(\vr_\sigma^{n,{\rm up}}|\hat r_\sigma^{n,{\rm up}})\Big)^{2/3}\Big]
 \times
\Big(\sum_{K\in {\cal T}}\sum_{\sigma\in {\cal E}(K)}|\sigma| h\Big|{\hat\bU}_{h,\sigma}^{n,{\rm up}}   -\hat\bu_\sigma^{n,{\rm up}}\Big|^6\Big)^{1/6}
\end{aligned}
\end{equation*}
provided $\gamma\ge 3/2$.
Next,  we observe that the contribution of the face $\sigma=K|L$ to the  sums
$
\sum_{K\in {\cal T}}$ $\sum_{\sigma\in {\cal E}(K)}$ $|\sigma| h
E(\vr_\sigma^{n,{\rm up}}|\hat r_\sigma^{n,{\rm up}})$
and $ \sum_{K\in {\cal T}}\sum_{\sigma\in {\cal E}(K)}|\sigma| h|{\hat\bU}_{h,\sigma}^{n,{\rm up}}   -\hat\bu_\sigma^{n,{\rm up}}|^6$
is less or equal than
$2|\sigma| h ( E(\vr_K^{n}|\hat r_K^{n}) + E(\vr_L^{n}|\hat r_L^{n})),
$ and
than $2|\sigma| h (|{\bU}_{h,K}^{n}   -\bu_K^{n}|^6+ |{\bU}_{h,L}^{n}   -\bu_L^{n}|^6)$, respectively. Consequently,we get
by the same reasoning
as in the previous step, under assumption $\gamma\ge 3/2$,
\begin{equation}\label{cP2}
|{\cal P}_2|\le c(\delta, M_0, E_0, \underline r,\overline r, \|(\bU,\nabla\bU)\|_{L^\infty(Q_T;\Rm^{12})})\,\deltat \sum_{n=1}^m {\cal E}(\vr^n,\bu^n|
r^n,\bU^n) + \delta\; \deltat \sum_{n=1}^m  |\bu^n-\bU_{h}^n)|_{V^2_h(\Omega;\Rm^3)}^2.
\end{equation}
}
{\bf Step 3:} {\it Term ${\cal P}_3$.}
Since the pair $(r,\bU)$ satisfies continuity equation (\ref{cont2}) in the classical sense, we have for all $n=1,\ldots,N$,
\[
[\partial_t r]^n +\bU^n\cdot\nabla r^n=-r^n\dv\bU^n,
\]
where we  recall that  $[\partial_t r]^n(x)=\partial_t r(t_n,x)$ in accordance with (\ref{notation2-}).
Using this identity we write
\begin{align*}
&{\cal P}^n_3= {\cal P}_{3,1} +{\cal P}_{3,2},\quad {\cal P}_{3,i}=\deltat\sum_{n=1}^m {\cal P}_{3,i}^n,\\
& \mbox{with } {\cal P}_{3,1}^n=
-\sum_{K\in {\cal T}} \int_K \Big(p(\vr_K^n)-p'(r_K^n)(\vr_K^n-r_K^n)-p(r_K^n)\Big)\dv\bU^n\dx \\
& \mbox{and } {\cal P}_{3,2}^n= \sum_{K \in {\cal T}}\Big[\int_K\frac{r_K^n-\vr^n_K}{r^n_K} p'(r^n_K)( \bu^n-\bU^n)\cdot\nabla r^n\dx.
\end{align*}
Now, we apply Lemma \ref{LL1} in combination with assumption (\ref{pressure1}) to deduce
\begin{equation}\label{cP3}
|{\cal P}_{3,1}|\le c\|\dv \bU\|_{L^\infty(Q_T)}\deltat \sum_{n=1}^m{\cal E}(\vr^n,\bu^n|r^n,\bU^n).
\end{equation}
Finally, the same reasoning as in Step 2 leads to the estimate
\begin{equation}\label{cP4}
\begin{aligned}
|{\cal P}_{3,2}|& \le h\;c(M_0, E_0,\underline r, \overline r, |p'|_{C([\underline r,\overline r])} \|(\nabla r,\nabla\bU)\|_{L^\infty(\Omega;\Rm^9)})
\\ & + c(\delta, \|\underline r, \overline r, |p'|_{C([\underline r,\overline r])} \|\nabla r\|_{L^\infty(\Omega;\Rm^3)})\;
\deltat \sum_{n=1}^m{\cal E}(\vr^n,\bu^n|r^n,\bU^n) +\delta \,\deltat \sum_{n=1}^m|\bu^n-\bU^n_h|_{V^2_h(\Omega;\Rm^3)}^2.
 \end{aligned}
\end{equation}
Gathering the formulae (\ref{relativeenergy-1}) and (\ref{cP1})-(\ref{cP4}) with $\delta$ sufficiently small (with respect to $\mu$), we conclude the proof of Lemma \ref{Gronwall}.
\end{proof}

{ Finally, Lemma \ref{Gronwall} in combination with the bound (\ref{est4}) yields
\[{\cal E}(\vr^m,\bu^m|r^m,\bU^m)\le c\Big[h^A+\sqrt{\deltat}+\deltat + {\cal E}(\vr^0,\bu^0|r^0,\bU^0)\Big] + c\,\deltat\sum_{n=1}^{m-1} {\cal E}(\vr^n,\bu^n|r^n,\bU^n);
\]
whence Theorem \ref{Main} is a direct consequence of the standard discrete version of Gronwall's lemma. Theorem \ref{Main} is thus proved.}

\section{Appendix: Fundamental auxiliary lemmas and estimates}\label{3}

In this section we report several results related to the properties of the Sobolev spaces on tetrahedra
and of the Crouzeix-Raviart (C-R) space. { We refer to the book Brezzi, Fortin \cite{BRFO} for the general introduction to the subject.}

{ We start with the  inequalities that can be obtained by rescaling from the standard inequalities on a reference tetrahedron of size equivalent to one.}

\begin{lm}[{ Poincar\'e, Sobolev  and interpolation inequalities on tetrahedra}]\label{Lemma2}
Let $1\le p\le \infty$. Let $\theta_0 > 0$ and ${\cal{T}} $ be a triangulation of $ \Omega $ such that $ \theta \ge \theta_0 $ where $ \theta $ is defined in (\ref{reg}). Then we have:
\begin{description}
\item{\it (1) Poincar\'e type inequalities on tetrahedra}

Let $1\le p\le\infty$. There exists $c=c(\theta_0,p)>0$ such that for all
$K\in {\cal T}$ and for all $v\in W^{1,p}(K)$ we have
\begin{equation}\label{L2-1}
\|v-v_K\|_{L^p(K)}\le c h\|\nabla v\|_{L^p(K)},
\end{equation}
\begin{equation}\label{L2-2}
\forall \sigma\in {\cal E}(K), \;\|v-v_\sigma\|_{L^p(K)}\le c h\|\nabla v\|_{L^p(K)}.
\end{equation}
\item{\it (2) Sobolev type inequalities on tetrahedra}

Let $1\le p<d$. There exists $c=c(\theta_0,p)>0$ such that for all
$K\in {\cal T}$ and for all $v\in W^{1,p}(K)$ we have
\begin{equation}\label{L2-3}
\|v-v_K\|_{L^{p^*}(K)}\le c \|\nabla v\|_{L^p(K)},
\end{equation}
\begin{equation}\label{L2-4}
\forall \sigma\in {\cal E(}K), \;\|v-v_\sigma\|_{L^{p^*}(K)}\le c \|\nabla v\|_{L^p(K)},
\end{equation}
where $p^*=\frac{dp}{d-p}$.
\item{\it (3) Interpolation inequalities on the tetrahedra}

Let $1\le p<d$ and $p\le q\le p^*$. There exists $c=c(\theta_0,p)>0$ such that for all
$K\in{\cal T}$ and $v\in W^{1,p}(K)$ we have
\begin{equation}\label{interpol1}
\|v-v_K\|_{L^{q}(K)}
\le c  h^\beta\|\nabla v\|_{L^p(K;\Rm^d)},
\end{equation}
\begin{equation}\label{interpol2}
\|v-v_\sigma\|_{L^{q}(K)}
\le ch^\beta \|\nabla v\|_{L^p(K;\Rm^d)},
\end{equation}
where $\frac 1q=\frac \beta p+\frac {1-\beta}{p^*}$.
%
\end{description}
\end{lm}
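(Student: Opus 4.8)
The plan is to reduce every inequality to its counterpart on a fixed reference tetrahedron $\hat K$ by means of an affine change of variables, and to control all the scaling factors using the mesh regularity assumption $\theta\ge\theta_0$. Fix $K\in{\cal T}$ and let $F_K:\hat K\to K$, $F_K(\hat x)=B_K\hat x+b_K$, be an invertible affine map onto $K$. Standard estimates for simplices give $\|B_K\|\le c\,h_K/\hat\xi$ and $\|B_K^{-1}\|\le c\,\hat h/\xi_K$ where $\hat h,\hat\xi$ are the diameter and inscribed-ball diameter of $\hat K$; hence, using $h_K\le h$ and $\xi_K\ge\theta_0 h_K\ge c\theta_0 h$ (via \eqref{reg} and the uniform regularity), one obtains $\|B_K\|\le c(\theta_0)\,h$ and $|\det B_K|\le c(\theta_0)\,h^d$, together with the reverse bounds $\|B_K^{-1}\|\le c(\theta_0)/h$ and $|\det B_K|^{-1}\le c(\theta_0)/h^d$. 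For $v\in W^{1,p}(K)$ set $\hat v=v\circ F_K\in W^{1,p}(\hat K)$; then $\nabla\hat v=B_K^T(\nabla v)\circ F_K$, and the change-of-variables formula yields, for any $1\le q\le\infty$,
\[
\|w\circ F_K\|_{L^q(\hat K)}=|\det B_K|^{-1/q}\|w\|_{L^q(K)},
\]
so that $L^q$-norms on $K$ and $\hat K$ differ by a power of $h$ that is completely explicit.

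\smallskip
Next I would record the reference inequalities on $\hat K$, which are classical and depend only on $\hat K$ and the exponents: the Poincar\'e inequality $\|\hat v-\hat v_{\hat K}\|_{L^p(\hat K)}\le c\|\nabla\hat v\|_{L^p(\hat K)}$ and its face version $\|\hat v-\hat v_{\hat\sigma}\|_{L^p(\hat K)}\le c\|\nabla\hat v\|_{L^p(\hat K)}$ for each face $\hat\sigma$ of $\hat K$ (both follow from the compact Rellich embedding $W^{1,p}(\hat K)\hookrightarrow L^p(\hat K)$ and a Peetre--Tartar argument, since the right-hand sides vanish exactly on constants); the Sobolev embedding $W^{1,p}(\hat K)\hookrightarrow L^{p^*}(\hat K)$ with $p^*=dp/(d-p)$, applied to $\hat v-\hat v_{\hat K}$ respectively $\hat v-\hat v_{\hat\sigma}$ to get \eqref{L2-3}--\eqref{L2-4}; and the elementary interpolation bound $\|w\|_{L^q(\hat K)}\le\|w\|_{L^p(\hat K)}^{1-\lambda}\|w\|_{L^{p^*}(\hat K)}^{\lambda}$ for $p\le q\le p^*$ with $1/q=(1-\lambda)/p+\lambda/p^*$, combined with the two previous estimates, to get \eqref{interpol1}--\eqref{interpol2}. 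Note that $\hat v_{\hat K}$ (resp. $\hat v_{\hat\sigma}$) transports to $v_K$ (resp. $v_\sigma$) under $F_K$ because $F_K$ maps $\hat K$ onto $K$ and $\hat\sigma$ onto the corresponding face $\sigma$ affinely, so averages are preserved.

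\smallskip
Finally I would assemble the pieces. For \eqref{L2-1}: transport the norm on the left to $\hat K$ (factor $|\det B_K|^{1/p}$), apply the reference Poincar\'e inequality, bound $\nabla\hat v$ by $\|B_K^T\|\,|(\nabla v)\circ F_K|$ and transport back (factor $|\det B_K|^{-1/p}$); the net scaling is $\|B_K\|\le c(\theta_0)h$, giving exactly \eqref{L2-1}. For \eqref{L2-2} the same computation with the face average. For \eqref{L2-3}--\eqref{L2-4}: the $L^{p^*}$-norm transport introduces $|\det B_K|^{1/p^*}$ on the left while the gradient transport introduces $|\det B_K|^{-1/p}\|B_K\|$; since $1/p^*-1/p=-1/d$ and $\|B_K\|\le c(\theta_0)h\le c(\theta_0)|\det B_K|^{1/d}$ (using $|\det B_K|\ge c(\theta_0)h^d$, which is where regularity enters), the $h$-powers cancel and the constant is independent of $h$. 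For \eqref{interpol1}--\eqref{interpol2}: apply the reference interpolation inequality to $\hat v-\hat v_{\hat K}$ (resp.\ $\hat v-\hat v_{\hat\sigma}$), combine with the reference Poincar\'e and Sobolev bounds so that both factors are controlled by $\|\nabla\hat v\|_{L^p(\hat K)}$, transport back, and collect powers of $h$: the left-hand transport gives $|\det B_K|^{1/q}$, the gradient transport gives $|\det B_K|^{-1/p}\|B_K\|$, and the exponent of $h$ works out to $\beta=d(1/p-1/q)$, consistent with $1/q=\beta/p+(1-\beta)/p^*$. In every case $c$ depends only on $\hat K$, on $p$ (hence $q$), and on $\theta_0$ through the bounds on $B_K$, $B_K^{-1}$, $\det B_K$. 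The only genuinely delicate point — and the one I would state carefully — is the two-sided control of $\|B_K\|$, $\|B_K^{-1}\|$ and $|\det B_K|$ by powers of $h$, which is precisely where the regularity hypothesis $\theta\ge\theta_0$ (together with the equivalence \eqref{reg1}) is used; the rest is bookkeeping of scaling exponents. $\hfill\Box$
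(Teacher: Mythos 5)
Your proof follows exactly the route the paper intends (the paper merely asserts that these inequalities ``can be obtained by rescaling from the standard inequalities on a reference tetrahedron''): affine transport to $\hat K$, the classical Poincar\'e, Sobolev and interpolation inequalities on the reference element, and two-sided control of $B_K$, $B_K^{-1}$, $\det B_K$ via the shape-regularity parameter $\theta_0$, together with the correct observation that cell and face averages are preserved by the affine map; so the substance is right and matches the paper's approach. Two small remarks. First, quasi-uniformity (\ref{reg1}) is not actually needed: it suffices to use $\|B_K\|\le c\,h_K$ and $|\det B_K|\ge c(\theta_0)h_K^d$ (shape regularity alone) and then $h_K\le h$ at the very end, so the lemma holds under (\ref{reg1-}) only. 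Second, your closing formula for the interpolation exponent is off: from $1/q=\beta/p+(1-\beta)/p^*$ and $1/p^*=1/p-1/d$ one gets $\beta=1-d(1/p-1/q)$, not $d(1/p-1/q)$; note that your own bookkeeping, $|\det B_K|^{1/q-1/p}\,\|B_K\|\sim h^{\,1-d(1/p-1/q)}=h^{\beta}$, already produces the correct power (equivalently, one can simply interpolate on $K$ between (\ref{L2-1}) and (\ref{L2-3}), which gives $h^\beta$ directly), so only the quoted identity needs fixing, not the argument.
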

{ Combining estimates (\ref{L2-1}--\ref{interpol2}) with the algebraic inequality}
\begin{equation}\label{dod1*}
	\Big(\sum_{i=1}^L|a_i|^p\Big)^{1/p}\le \Big(\sum_{i=1}^L|a_i|^q\Big)^{1/q}
\end{equation}
for all $(a=(a_1,\ldots,a_L)\in \Rm^L$, $1\le q\le p<\infty$, we obtain the following corollaries.
\begin{cor}[Poincar\'e and Sobolev type inequalities on the Sobolev spaces]\label{cor1}
Under the assumptions of Lemma \ref{Lemma2}, we have:
\begin{description}
\item{\it (1) Poincar\'e type inequalities on the domain $\Omega$}

 Let $1\le p\le \infty$. There exists $c=c(\theta_0,p)>0$ such that for all $v\in W^{1,p}(\Omega)$ we have
\begin{align}\label{L2-5}
	\|v-\hat v\|_{L^p(\Omega)}\equiv\Big(\sum_{K\in {\cal T}}\|v-v_K\|^p_{L^p(K)}\Big)^{1/p}\le c h\|\nabla v\|_{L^p(\Omega;\Rm^d)},\\
	\label{L2-6}
	\Big(\sum_{K\in {\cal T}}\sum_{\sigma\in {\cal E}(K)}\|v-v_\sigma\|^p_{L^p(K)}\Big)^{1/p}\le c h\|\nabla v\|_{L^p(\Omega;\Rm^d)}
\end{align}
where $\hat v$ and  $v_\sigma$ are defined by \eqref{vhat} and \eqref{vtilde}.
\item{\it (2) Sobolev type inequalities on the domain $\Omega$}

Let $1\le p<d$. There exists $c=c(\theta_0,p)>0$ such that for all
$v \in W^{1,p}(\Omega)$ we have
\begin{equation}\label{L2-7}
\|v-\hat v\|_{L^{p^*}(\Omega)}
\le c  \|\nabla v\|_{L^p(\Omega)},
\end{equation}
\begin{equation}\label{L2-8}
\Big(\sum_{K\in {\cal T}}\sum_{\sigma\in {\cal E}(K)}\|v-v_\sigma\|^{p^*}_{L^{p^*}(K)}\Big)^{1/p^*}
\le c \|\nabla v\|_{L^p(\Omega;\Rm^d)}.
\end{equation}
\item{\it (3) Interpolation inequalities on the domain $\Omega$}

Let $1\le p<d$ and $p\le q\le p^*$. There exists $c=c(\theta_0,p)>0$ such that for all
$v\in W^{1,p}(\Omega)$ we have
\begin{equation}\label{L2-9}
\|v-\hat v\|_{L^{q}(\Omega)}
\le c  h^\beta\|\nabla v\|_{L^p(\Omega)},
\end{equation}
\begin{equation}\label{L2-10}
\Big(\sum_{K\in {\cal T}}\sum_{\sigma\in {\cal E}(K)}\|v-v_\sigma\|^{q}_{L^{q}(K)}\Big)^{\frac 1 q}
\le ch^\beta \|\nabla v\|_{L^p(\Omega;\Rm^d)},
\end{equation}
where $\frac 1q=\frac \beta p+\frac {1-\beta}{p^*}$.
\end{description}
\end{cor}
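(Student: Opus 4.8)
The plan is to derive all six global inequalities by the same mechanical procedure: raise the corresponding local estimate from Lemma~\ref{Lemma2} to an appropriate power, sum over the cells $K\in{\cal T}$ (and, for the face-indexed statements, also over $\sigma\in{\cal E}(K)$), and then, whenever the exponent occurring in the left-hand norm differs from the exponent $p$ on $\|\nabla v\|_{L^p}$, invoke the elementary embedding~(\ref{dod1*}) of $\ell^p$ into $\ell^q$ to replace the $\ell^s$-sum of the cell-wise gradient norms by the $\ell^p$-sum, i.e.\ by $\|\nabla v\|_{L^p(\Omega)}$. No further analytic ingredient beyond Lemma~\ref{Lemma2} is needed; the argument is purely a bookkeeping of exponents, and throughout I will use that $v\in W^{1,p}(\Omega)$ restricts to an element of $W^{1,p}(K)$ for each $K$ because the closed cells cover $\overline\Omega$ and overlap only on sets of measure zero.

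First I would handle the Poincar\'e estimates~(\ref{L2-5})--(\ref{L2-6}). Since $\hat v=\sum_{K\in{\cal T}}v_K1_K$, one has the exact identity $\|v-\hat v\|_{L^p(\Omega)}^p=\sum_{K\in{\cal T}}\|v-v_K\|_{L^p(K)}^p$, so raising~(\ref{L2-1}) to the power $p$ and summing over $K$ yields~(\ref{L2-5}) immediately, with the same constant $c(\theta_0,p)$ and without any recourse to~(\ref{dod1*}) because here the two exponents already coincide. For~(\ref{L2-6}) the same computation applies after raising~(\ref{L2-2}) to the power $p$ and summing over the (exactly $d+1$) faces of each $K$; this costs only a factor bounded in terms of $d$, which is absorbed into $c$.

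Next I would treat the Sobolev estimates~(\ref{L2-7})--(\ref{L2-8}) and the interpolation estimates~(\ref{L2-9})--(\ref{L2-10}) simultaneously, the mechanism being identical. Writing again $\|v-\hat v\|_{L^s(\Omega)}^s=\sum_{K\in{\cal T}}\|v-v_K\|_{L^s(K)}^s$ with $s=p^{*}$ (Sobolev case) or $s=q$ (interpolation case), raising the pointwise-in-$K$ bounds~(\ref{L2-3}) and~(\ref{interpol1}) (respectively~(\ref{L2-4}) and~(\ref{interpol2}), after the extra sum over faces) to the power $s$ and summing gives
\[
\|v-\hat v\|_{L^{s}(\Omega)}\le c\,h^{\beta}\Big(\sum_{K\in{\cal T}}\|\nabla v\|_{L^{p}(K;\Rm^{d})}^{s}\Big)^{1/s},
\]
with $\beta=0$ in the Sobolev case and $\beta$ as in~(\ref{interpol1}) in the interpolation case. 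Since in all these cases $p\le s$ (namely $p\le p^{*}$, and $p\le q\le p^{*}$), the algebraic inequality~(\ref{dod1*}) applied to the finite family $a_K=\|\nabla v\|_{L^{p}(K;\Rm^{d})}$ with large exponent $s$ and small exponent $p$ gives $\big(\sum_{K}a_K^{s}\big)^{1/s}\le\big(\sum_{K}a_K^{p}\big)^{1/p}=\|\nabla v\|_{L^{p}(\Omega;\Rm^{d})}$, which is exactly the claimed bound. The face-indexed versions~(\ref{L2-8}) and~(\ref{L2-10}) follow in the very same way, with $\sum_{K}\sum_{\sigma\in{\cal E}(K)}$ in place of $\sum_{K}$ and the $d+1$ faces per cell again contributing only a harmless constant.

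The only point that needs a little attention---the nearest thing to an obstacle here---is applying~(\ref{dod1*}) in the correct direction: the larger of the two exponents must sit on the left-hand side, which is legitimate precisely because $s\ge p$ in every case treated, whereas in the Poincar\'e estimates the exponents coincide and~(\ref{dod1*}) is not used at all. Once the exponents are correctly lined up, each remaining step is a one-line computation, and the resulting constants depend only on $\theta_0$, $p$ and (through the bounded number of faces of a tetrahedron) on $d$, as asserted.
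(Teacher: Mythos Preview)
Your proposal is correct and follows exactly the approach indicated in the paper, which simply states that the corollary follows by combining the local estimates (\ref{L2-1})--(\ref{interpol2}) of Lemma~\ref{Lemma2} with the algebraic inequality~(\ref{dod1*}). Your write-up makes explicit the bookkeeping the paper leaves implicit, including the correct observation that~(\ref{dod1*}) is only needed when the target exponent $s$ exceeds $p$, and that it applies in the right direction precisely because $s\ge p$.
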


\begin{cor}[Poincar\'e and Sobolev type inequalities on $V_h$]\label{cor2}
Under assumptions of Lemma \ref{Lemma2}, there holds:
\begin{description}
\item {\it (1) Poincar\'e type inequality in $V_h(\Omega)$:}
Let $1\le p<\infty$. There exists $c=c(\theta_0,p)$  such that for all $v \in V_h,$
\begin{equation}\label{sob4-}
\|v-\hat v\|_{L^{p}(\Omega)}
\le c h |v|_{V_h^p(\Omega)},
\end{equation}
\begin{equation}\label{sob5-}
 \Big(\sum_{K\in {\cal T}}\sum_{\sigma\in {\cal E}(K)}\|v-v_\sigma\|_{L^p(K)}^p\Big)^{\frac 1 p}
\le c h | v|_{V_h^p(\Omega)}.
\end{equation}

\item {\it (2) Sobolev type inequality in $V_h(\Omega)$:}
Let $1\le p<d$. There exists $c=c(\theta_0,p)$  such that for all $v \in V_h(\Omega),$
\begin{equation}\label{sob4}
\|v-\hat v\|_{L^{p^*}(\Omega)}
\le c |v|_{V_h^p(\Omega)},
\end{equation}
\begin{equation}\label{sob5}
\Big(\sum_{K\in {\cal T}}\sum_{\sigma\in {\cal E}(K)}\|v-v_\sigma\|_{L^{p^*}(K)}^{p^*}\Big)^{\frac 1 {p^*}}
\le c | v|_{V_h^p(\Omega)}.
\end{equation}
\item{\it (3) Interpolation type inequalities in $V_h(\Omega)$ }

Let $1\le p<d$ and $p\le q\le p^*$. There exists $c=c(\theta_0,p)>0$ such that for all
$v\in V_h(\Omega)$ we have
\begin{equation}\label{L2+-1}
\|v-\hat v\|_{L^{q}(\Omega)}
\le c  h^\beta|v|_{V_h^p(\Omega)},
\end{equation}
\begin{equation}\label{L2+-2}
\Big(\sum_{K\in {\cal T}}\sum_{\sigma\in {\cal E}(K)}\|v-v_\sigma\|_{L^q(K)}^q\Big)^{\frac 1 q}
\le ch^\beta |v|_{V_h^p(\Omega)},
\end{equation}
where $\frac 1q=\frac \beta p+\frac {1-\beta}{p^*}$.
\end{description}
\end{cor}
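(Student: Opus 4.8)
The plan is to reduce every estimate in the statement to the single‑tetrahedron inequalities of Lemma \ref{Lemma2} applied cellwise, since all the quantities involved ($\hat v$, the face means $v_\sigma$, and $|v|_{V_h^p(\Omega)}$) are defined element by element. First I would observe that for $v\in V_h(\Omega)$ the restriction $v_{|K}$ is affine, hence $v_{|K}\in W^{1,p}(K)$ for every $K\in{\cal T}$ and every $1\le p\le\infty$; moreover, by the continuity constraint in the definition of $V_h(\Omega)$ (the integrals of $v_{|K}$ and $v_{|L}$ over $\sigma=K|L$ agree), the face mean value $v_\sigma$ of \eqref{vtilde} is unambiguously obtained from the trace of $v_{|K}$ on either side. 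Thus Lemma \ref{Lemma2} applies verbatim to $v_{|K}$, the broken gradient $\nabla v$ that enters $|v|_{V_h^p(\Omega)}$ playing on each cell the role of $\nabla v$ in \eqref{L2-1}--\eqref{interpol2}.

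For the Poincar\'e‑type bounds \eqref{sob4-}, \eqref{sob5-} I would raise \eqref{L2-1} (resp.\ \eqref{L2-2}) to the power $p$, sum over $K\in{\cal T}$ (resp.\ over $K$ and $\sigma\in{\cal E}(K)$), and take the $p$‑th root: the left‑hand side reassembles as $\|v-\hat v\|_{L^p(\Omega)}^p=\sum_K\|v-v_K\|_{L^p(K)}^p$, while the right‑hand side becomes $c^p h^p\sum_K\|\nabla v\|^p_{L^p(K;\Rm^d)}=c^p h^p|v|_{V_h^p(\Omega)}^p$, which is exactly the claim. For the Sobolev‑type bounds \eqref{sob4}, \eqref{sob5} the same summation of \eqref{L2-3}, \eqref{L2-4} raised to the power $p^*$ gives $\|v-\hat v\|_{L^{p^*}(\Omega)}^{p^*}\le c^{p^*}\sum_K\|\nabla v\|_{L^p(K;\Rm^d)}^{p^*}$; since $p\le p^*$, the algebraic inequality \eqref{dod1*} applied with $a_K=\|\nabla v\|_{L^p(K;\Rm^d)}$ and exponents $p\le p^*$ yields $\sum_K a_K^{p^*}\le\big(\sum_K a_K^{p}\big)^{p^*/p}$, and taking the $p^*$‑th root gives $\|v-\hat v\|_{L^{p^*}(\Omega)}\le c\,|v|_{V_h^p(\Omega)}$, and similarly for \eqref{sob5}. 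The interpolation inequalities \eqref{L2+-1}, \eqref{L2+-2} follow by repeating the argument with \eqref{interpol1}, \eqref{interpol2} in place of \eqref{L2-3}, \eqref{L2-4}: one keeps the factor $h^\beta$ and uses \eqref{dod1*} once more to pass from the $\ell^q$ sum of the gradient norms to the $\ell^p$ sum (legitimate since $p\le q$). All constants depend only on $\theta_0$ and $p$, as in Lemma \ref{Lemma2}.

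There is in truth no serious obstacle here: the statement is a routine consequence of Lemma \ref{Lemma2} together with the convexity inequality \eqref{dod1*}, obtained exactly as Corollary \ref{cor1} was. The one point deserving a word of care is that, $V_h(\Omega)$ being nonconforming, Corollary \ref{cor1} cannot be invoked directly (its hypothesis $v\in W^{1,p}(\Omega)$ fails, as $V_h$‑functions jump across interior faces); one must instead go back to the purely local Lemma \ref{Lemma2}, whose cellwise estimates never see those jumps, and combine it with the well‑definedness of $v_\sigma$ noted above.
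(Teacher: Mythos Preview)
Your proof is correct and follows exactly the approach indicated in the paper, which presents Corollary \ref{cor2} (alongside Corollary \ref{cor1}) as a direct consequence of the cellwise estimates \eqref{L2-1}--\eqref{interpol2} of Lemma \ref{Lemma2} combined with the algebraic inequality \eqref{dod1*}. Your remark that one must work with the local Lemma \ref{Lemma2} rather than Corollary \ref{cor1} (since $V_h(\Omega)\not\subset W^{1,p}(\Omega)$), together with the observation that the face means $v_\sigma$ are unambiguous thanks to the Crouzeix--Raviart continuity constraint, is precisely the point.
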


{ The next fundamental lemma deals with the properties of the projection $v_h$ defined by (\ref{vh})}.

\begin{lm} [Projection on $V_h$]\label{Lemma1}
Let $\theta_0 > 0$ and ${\cal{T}} $ be a triangulation of $ \Omega $ such that $ \theta \ge \theta_0 $ where $ \theta $ is defined in (\ref{reg}).
\begin{description}
\item{\it (1) Approximation estimates on the tetrahedra}

Let $1\le p\le \infty$. There exists $c=c(\theta_0,p)>0$ such that
{
\begin{equation}\label{ddd}
\forall v\in W^{1,p}_0(\Omega)\cap L^\infty(\Omega), \forall K \in {\cal{T}},\;\|v_h\|_{L^\infty(K)}\le c \|v\|_{L^\infty(K)},
\end{equation}
}
\begin{equation}\label{L1-2}
\forall v \in W^{1,p}_0\cap W^{s,p}(\Omega), \forall K \in {\cal{T}},~ || v-v_h||_{L^p(K)} \le c h^{s}\|\nabla^s v\|_{L^p(K;\Rm^{d^s})},
\end{equation}
\begin{equation}\label{L1-3}
|| \nabla( v- v_h) ||_{L^p(K;\Rm^d)} \le c h^{s-1} \|\nabla^s v\|_{L^p(K;\Rm^{d^s})}, s=1,2.
\end{equation}

\item{\it (2) Preservation of divergence}

\begin{equation}\label{L1-1}
\forall \bv \in W^{1,2}_{0}(\Omega,\R^d), \forall q \in L_h(\Omega), \sum_{K\in {\cal T}}\int_K q ~\dv\bv_h \dx = \int_{\Omega} q ~\dv \bv \dx
\end{equation}

\item{(3) Approximation estimates of the Poincar\'e type on the whole domain}

Let $1\le p<\infty$. There exists $c=c(\theta_0,p)>0$ such that for all $v\in W^{1,p}_0(\Omega)$,
\begin{equation}\label{L1-4}
\|v-v_h\|_{L^p(\Omega)}\le c h \|\nabla v\|_{L^p(\Omega;\Rm^d)}.
\end{equation}

\item{ (4) Approximation estimates of the Sobolev type on the whole domain}

Let $1\le p<d$. There exists $c=c(\theta_0,p)>0$ such that for all $v\in W^{1,p}_0(\Omega)$,
\begin{equation}\label{L1-5}
\|v-v_h\|_{L^{p^*}(\Omega)}\le c \|\nabla v\|_{L^p(\Omega;\Rm^d)}.
\end{equation}
\end{description}
\end{lm}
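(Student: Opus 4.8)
The operator $v\mapsto v_h$ of (\ref{vh}) is local: on each $K\in{\cal T}$, $(v_h)|_K$ is the unique element of $\mathbb P_1(K)$ whose mean value over every face $\sigma\in{\cal E}(K)$ equals that of $v$ (boundary faces contributing a zero mean, since $v\in W^{1,p}_0(\Omega)$). Denote by $I_K\colon W^{1,p}(K)\to\mathbb P_1(K)$ the corresponding local interpolation operator; the face means $w\mapsto\frac1{|\sigma|}\int_\sigma w\dS$ are bounded linear functionals on $W^{1,p}(K)$, $1\le p\le\infty$, by the trace theorem ($W^{1,p}(K)\hookrightarrow L^1(\partial K)$), so $I_K$ is well defined and bounded. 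Writing $K=F_K(\hat K)$ with $\hat K$ a fixed reference tetrahedron and $F_K(\hat x)=B_K\hat x+b_K$ affine, the regularity $\theta\ge\theta_0$ in (\ref{reg}) yields $\|B_K\|\le c\,h_K$, $\|B_K^{-1}\|\le c\,h_K^{-1}$ and $c\,h_K^d\le|\det B_K|\le h_K^d$ with $c=c(\theta_0)$. Hence it suffices to prove the corresponding statements on $\hat K$ and transport them by $F_K$, bookkeeping the powers of $h_K$ and $|\det B_K|$, which cancel by the regularity bounds; all constants thus depend only on $\theta_0$ and $p$, and one replaces $h_K$ by $h$ wherever the weaker bound is stated.

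\noindent\emph{Part (1).} On $\hat K$ the operator $I_{\hat K}$ reproduces $\mathbb P_1$ (the face means of an affine function are its values at the face centroids, and the affine function matching these is the function itself); therefore $I-I_{\hat K}$ annihilates $\mathbb P_{s-1}$ for $s=1,2$. Combined with the boundedness of the face-mean functionals on $W^{1,p}(\hat K)$ and $W^{2,p}(\hat K)$, the Deny--Lions/Bramble--Hilbert lemma gives $\|\hat v-I_{\hat K}\hat v\|_{W^{1,p}(\hat K)}\le c(p)\|\nabla^s\hat v\|_{L^p(\hat K)}$ for $s=1$ and, after subtracting an affine function, for $s=2$. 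Pulling back through $F_K$ yields (\ref{L1-2}) and (\ref{L1-3}). For (\ref{ddd}): on $\hat K$ all norms on $\mathbb P_1(\hat K)$ are equivalent, so $\|I_{\hat K}\hat v\|_{L^\infty(\hat K)}\le c\max_{\hat\sigma}\big|\tfrac1{|\hat\sigma|}\int_{\hat\sigma}\hat v\big|\le c\|\hat v\|_{L^\infty(\hat K)}$, using that the trace of a function in $L^\infty(\hat K)\cap W^{1,p}(\hat K)$ is bounded a.e.\ on $\partial\hat K$ by its $L^\infty$ norm; since the $L^\infty$ norm is invariant under the affine change of variables, (\ref{ddd}) follows at once.

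\noindent\emph{Part (2).} Since $q\in L_h(\Omega)$ is constant equal to $q_K$ on $K$ and $\bv_h|_K$ is affine, the divergence theorem on $K$ gives $\int_K q\,\dv\bv_h\dx=q_K\sum_{\sigma\in{\cal E}(K)}\int_\sigma \bv_h|_K\cdot\bn_{\sigma,K}\dS$; applied to $\bv\in W^{1,2}(K)$ it gives $\int_K q\,\dv\bv\dx=q_K\sum_{\sigma\in{\cal E}(K)}\int_\sigma \bv\cdot\bn_{\sigma,K}\dS$. For every $\sigma\in{\cal E}(K)$ the two surface integrals coincide componentwise, because $\int_\sigma \bv_h|_K\dS=\int_\sigma\bv\dS$ by the definition of $\bv_h$ (both vanishing when $\sigma\subset\partial\Omega$: the first by the definition of $V_h(\Omega)$, the second because $\bv\in W^{1,2}_0(\Omega)$). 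Hence $\int_K q\,\dv\bv_h\dx=\int_K q\,\dv\bv\dx$ for every $K$, and summing over $K\in{\cal T}$ gives (\ref{L1-1}); no cancellation across interfaces is used, the identity holding element by element.

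\noindent\emph{Parts (3) and (4).} Summing the local bound (\ref{L1-2}) with $s=1$ over $K$ in $\ell^p$ gives (\ref{L1-4}). For (\ref{L1-5}), with $p<d$, I would decompose $v-v_h=(v-\hat v)+(\hat v-\widehat{v_h})+(\widehat{v_h}-v_h)$, with $\hat\cdot$ the cell-average projection (\ref{vhat}). The first term is $\le c\|\nabla v\|_{L^p(\Omega)}$ in $L^{p^*}$ by (\ref{L2-7}); for the last, (\ref{L1-3}) with $s=1$ gives $\|\nabla v_h\|_{L^p(K)}\le\|\nabla v\|_{L^p(K)}+\|\nabla(v-v_h)\|_{L^p(K)}\le c\|\nabla v\|_{L^p(K)}$, so $|v_h|_{V_h^p(\Omega)}\le c\|\nabla v\|_{L^p(\Omega)}$ and the discrete Sobolev inequality (\ref{sob4}) of Corollary \ref{cor2} bounds $\|\widehat{v_h}-v_h\|_{L^{p^*}(\Omega)}$; for the middle term, $\|\hat v-\widehat{v_h}\|_{L^{p^*}(K)}=|K|^{1/p^*}\big|\tfrac1{|K|}\int_K(v-v_h)\big|\le|K|^{1/p^*-1/p}\|v-v_h\|_{L^p(K)}\le c\|\nabla v\|_{L^p(K)}$, using H\"older, the local ($h_K$) form of (\ref{L1-2}) with $s=1$, and $|K|^{-1/d}h_K\le c(\theta_0)$; summing in $\ell^{p^*}$ and using (\ref{dod1*}) to pass to an $\ell^p$ sum gives $\le c\|\nabla v\|_{L^p(\Omega)}$, and adding the three estimates yields (\ref{L1-5}). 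The technical heart of the argument is Part (1): checking that the face-mean functionals are continuous on $W^{1,p}(\hat K)$ for all $1\le p\le\infty$, invoking Bramble--Hilbert with the correct polynomial-reproduction order, and tracking the $\theta_0$-dependence of every scaling constant; the only other delicate point is routing (\ref{L1-5}) through the cell-average projection and Corollaries \ref{cor1}--\ref{cor2}, so that the passage from an $\ell^p$ to an $\ell^{p^*}$ sum is legitimate (summing local Sobolev estimates directly would not close).
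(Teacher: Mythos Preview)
Your proof is correct. It differs from the paper's in two places, and the differences are worth noting.

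For Part~(1), you invoke the reference-element/Bramble--Hilbert machinery, which is the textbook finite-element route and has the advantage of being concise and extendable to higher order. The paper instead works directly on $K$: it records the partition-of-unity identity $\sum_{\sigma\in{\cal E}(K)}\varphi_\sigma\equiv 1$ and the scaling bounds $\|\varphi_\sigma\|_{L^\infty}\le c$, $h\|\nabla\varphi_\sigma\|_{L^\infty}\le c$, then writes out the first- (resp.\ second-) order Taylor expansion of $v$ about the centroid $x_K$ and estimates each term by hand (Fubini, H\"older, convexity of $K$). Both arguments hinge on $I_K$ reproducing $\mathbb P_1$; yours packages that fact into Bramble--Hilbert, the paper keeps it explicit.

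For Part~(4), the paper's argument is shorter: from $\sum_\sigma\varphi_\sigma=1$ it writes $v-v_h=\sum_{\sigma\in{\cal E}(K)}(v-v_\sigma)\varphi_\sigma$ on each $K$, applies the local Sobolev bound (\ref{L2-4}) together with the $L^\infty$ bound on $\varphi_\sigma$, and then passes from the $\ell^{p^*}$ sum to the $\ell^p$ sum via (\ref{dod1*}). Your three-term decomposition through the cell-average projection $\hat v$ is more circuitous but perfectly sound; it exploits Corollaries~\ref{cor1}--\ref{cor2} (which are independent of Lemma~\ref{Lemma1}), and your handling of the middle term---using the local $h_K$ form of (\ref{L1-2}) together with $|K|^{-1/d}h_K\le c(\theta_0)$ and then (\ref{dod1*})---is exactly the right way to close the $\ell^{p^*}$-to-$\ell^p$ gap. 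Parts~(2) and~(3) match the paper (which in fact only cites \cite{cro-73-con} for~(2); your element-by-element divergence argument is the standard one).
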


{ Statement {\it (2)} of Lemma \ref{Lemma1} is proved in
\cite{cro-73-con}, where one can find also the proof of item {\it (1)} for $p=2$. We present here the proof of statements {\it (1), (3), (4)} for arbitrary $p$ for the reader's convenience, since a straightforward reference is not available.
\\ \\
\begin{proof}

{\textbf{ Step 1:}} We start with some generalities. First we complete the Crouzeix-Raviart basis (\ref{CRB}) by functions $\phi_\sigma$ indexed also with $\sigma\in {\cal E}_{\rm ext}$ saying
$$
\frac 1{|\sigma'|}\int_{\sigma'}\phi_\sigma {\rm d} S=\delta_{\sigma,\sigma'},\; \;(\sigma,\sigma')\in {\cal E}^2
$$
and observe that
\begin{equation}\label{+1}
\sum_{\sigma\in {\cal E}(K)}\phi_\sigma(x)= 1\;\mbox{ for any $x\in K$.}
\end{equation}
A scaling argument yields
\begin{equation}\label{+2}
\|\phi_\sigma\|_{L^\infty(\Omega)}\le c(\theta_0),\; h\|\nabla\phi_\sigma\|_{L^\infty(\Omega;\Rm^d)}\le c(\theta_0).
\end{equation}
Second, we define the projection $v\to v_h$ for any $v\in W^{1,p}(\Omega)$ by saying
$$
v_h=\sum_{\sigma\in {\cal E}} v_\sigma\phi_\sigma.
$$
We notice that if $v\in W^{1,p}_0(\Omega)$ then $v_h$ coincides with (\ref{CRP}). Moreover,
\begin{equation}\label{+3}
v_h=v\;\mbox{for any affine function $v$.}
\end{equation}
Third, due to the density argument, it is enough to show the remaining statements {\it (1), (3), (4)} for
$v\in W^{1,p}_0(\Omega)\cap W^{s,\infty}(\Omega)$, $s=1,2$, according to the case.
\\ \\
{\textbf{Step 2:}}  { We realize that ${\rm supp }\phi_\sigma=K\cup L$ and derive (\ref{ddd}) directly by employing representation (\ref{vh}), definition of $v_\sigma$ and estimate (\ref{+2}).}

We denote by $x_K=\frac 1{|K|}\int_K x{\rm d}x$ the center of gravity of the tetrahedron $K$. We calculate by using (\ref{+3}) and the first order Taylor formula
$$
v(x)-v_h(x)= v(x)-v(x_K) - [v-v(x_K)]_h(x)
$$
$$
=(x-x_K)\cdot\int_0^1\nabla v(x_K+t(x-x_K)){\rm d} t
- \sum_{\sigma\in {\cal E}(K)}\phi_\sigma(x) \frac 1{|\sigma|}\int_\sigma\Big[(x-x_K)\cdot\int_0^1\nabla v(x_K+t(x-x_K)){\rm d} t\Big]{\rm d} S,
$$
where $x\in K$.
This formula yields immediately the upper bound  stated in (\ref{L1-2})$_{s=1}$ if $p=\infty$. If $1\le p<\infty$
we calculate the upper bound of the $L^p$-norm of each term at the right-hand side separately by using (\ref{+2}), Fubini's theorem, H\"older's inequality and the change of variables $y= x_K+t(x-x_K)$ together with the convexity of $K$.

The same reasoning can be applied to prove (\ref{L1-2})$_{s=2}$. Indeed, we observe
that
$$
v(x)-v_h(x)= v(x)-(x-x_K)\cdot\nabla v(x_K)-v(x_K) - [v -(x-x_K)\cdot\nabla v(x_K)-v(x_K)]_h(x)
$$
by virtue of (\ref{+3}). Now we apply to the right hand side of the last expression the second order Taylor formula in the integral form, and proceed exactly as described before.

Finally, one applies the same straightforward argumentation to get (\ref{L1-3}). This completes the proof of statement {\it (1)}.
\\ \\
{\textbf{Step 3:}} Statement {\it (3)} follows easily from (\ref{L1-2})$_{s=1}$ and the algebraic inequality (\ref{dod1*}).
\\ \\
{\textbf{ Step 4:}} We use (\ref{+1}) and (\ref{+3}) to write
$$
v(x)-v_h(x)= \sum_{\sigma\in {\cal E}(K)} (v(x)-v_\sigma)\phi_\sigma(x),\;\; x\in K;
$$
whence
$$
\|v-v_h\|_{L^{p^*}(K)}\le c\|\nabla v\|_{L^p(K;\Rm^d)}
$$
where we have used the Sobolev inequality (\ref{L2-4}) on the tetrahedron $K\in {\cal T}$ and the $L^\infty$-bound (\ref{+2}). We conclude the proof of statement {\it (4)} by using the relation (\ref{dod1*}). The proof of Lemma \ref{Lemma1} is complete.
\end{proof}
}

The following corollary is a direct consequence of (\ref{L1-3}).

\begin{cor}[Continuity of the projection onto $V_h$]\label{cor3}
Under assumptions of Lemma \ref{Lemma1}, there exists $c=c(\theta_0,p)>0$ such that
\begin{equation}\label{L1-6}
\forall v \in W^{1,p}_0(\Omega),\;|v_h|_{V_h^p(\Omega)}\le c \|\nabla v\|_{L^p(\Omega;\Rm^d)},
\end{equation}
where $1\le p<\infty$.
\end{cor}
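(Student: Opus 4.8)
The plan is to read off the bound on $|v_h|_{V_h^p(\Omega)}$ directly from the local gradient approximation estimate (\ref{L1-3}) with $s=1$, using only the triangle inequality on each cell together with the definition of the broken $W^{1,p}$-seminorm. Since $v\in W^{1,p}_0(\Omega)$, the hypotheses of Lemma \ref{Lemma1}{\it (1)} are satisfied in the case $s=1$ (indeed $W^{1,p}_0(\Omega)\cap W^{1,p}(\Omega)=W^{1,p}_0(\Omega)$), so no density or smoothness reduction is needed; recall also that for $v_h\in V_h(\Omega)$ the gradient $\nabla v_h$ is understood elementwise, $v_h|_K\in\mathbb{P}_1(K)$, which is exactly the object measured by $|\cdot|_{V_h^p(\Omega)}$.

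First I would fix $K\in{\cal T}$ and write $\nabla v_h=\nabla(v_h-v)+\nabla v$ on $K$. The triangle inequality in $L^p(K;\Rm^d)$ then gives
\[
\|\nabla v_h\|_{L^p(K;\Rm^d)}\le \|\nabla(v-v_h)\|_{L^p(K;\Rm^d)}+\|\nabla v\|_{L^p(K;\Rm^d)},
\]
and the first term on the right is bounded by $c\,\|\nabla v\|_{L^p(K;\Rm^d)}$ by (\ref{L1-3}) with $s=1$, where $c=c(\theta_0,p)$. Hence $\|\nabla v_h\|_{L^p(K;\Rm^d)}\le(c+1)\|\nabla v\|_{L^p(K;\Rm^d)}$ for every $K\in{\cal T}$.

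Then I would raise this cellwise inequality to the power $p$, sum over all $K\in{\cal T}$, and take the $p$-th root. Recalling that $|v_h|_{V_h^p(\Omega)}=\big(\sum_{K\in{\cal T}}\|\nabla v_h\|_{L^p(K;\Rm^d)}^p\big)^{1/p}$ and that $\|\nabla v\|_{L^p(\Omega;\Rm^d)}^p=\sum_{K\in{\cal T}}\|\nabla v\|_{L^p(K;\Rm^d)}^p$, this yields $|v_h|_{V_h^p(\Omega)}\le(c+1)\|\nabla v\|_{L^p(\Omega;\Rm^d)}$, which is the assertion.

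There is essentially no obstacle here; the single point to be careful about is that in the case $s=1$ the estimate (\ref{L1-3}) carries an $h$-independent constant, depending only on $\theta_0$ and $p$, so that summing over the mesh produces no dependence on $h$ or on the number of tetrahedra. The restriction $1\le p<\infty$ (as opposed to $p=\infty$) is used only so that the $\ell^p$-aggregation over the cells in the definition of $|\cdot|_{V_h^p(\Omega)}$ is meaningful in the stated form.
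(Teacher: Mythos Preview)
Your argument is correct and is exactly what the paper intends: it states that the corollary is a direct consequence of (\ref{L1-3}), and the elementwise triangle inequality followed by summation over $K\in{\cal T}$ is precisely that direct consequence.
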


Although the  non conforming finite element space $V_h$ is not a subspace of any Sobolev space, its elements enjoy the Sobolev type inequalities.
This important fact is formulated in the next lemma.

\begin{lm}[Sobolev inequality  on $V_h$]\label{Lemma2+}
Let $\Omega$ be a bounded domain  of $\R^d$.
Let ${\cal{T}}$ be a triangulation of the domain $\Omega$ in simplices such that $\theta \ge \theta_0 >0$ where $\theta$ is defined in $(\ref{reg})$.
Then we have:
\begin{description}
\item{ (1) Sobolev  inequality in $V_h(\Omega)$ (case $1\le p<d$):}

There exists $c=c(\theta_0,p)$  such that for all $v \in V_h(\Omega),$
\begin{equation}\label{sob1}
||v||_{L^{p^*}(\Omega)} \le c |v|_{V_h^{p}(\Omega)}.
\end{equation}
\item{\it (2) Sobolev inequality in $V_h(\Omega)$, case $p\ge d$}

Let $1\le q<\infty$. There here exits $c=c(\theta_0,p,q)>0$  such that forall $v \in V_h(\Omega)$,
\begin{equation}\label{sob3}
\|v\|_{L^q(\Omega)} \le c|v|_{V_h^p(\Omega)}
\end{equation}

\end{description}
\end{lm}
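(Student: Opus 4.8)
The plan is to deduce both (\ref{sob1}) and (\ref{sob3}) from the classical Sobolev inequality for functions of bounded variation, applied to elements of $V_h(\Omega)$ extended by zero outside $\Omega$, and to pass from $p=1$ to general $p$ by a power–function device. Fix $v\in V_h(\Omega)$, write $v_K=v|_K$, and let $\bar v$ be the extension of $v$ by $0$ to $\R^d$; since each $v_K$ is affine, $\bar v\in BV(\R^d)$ has compact support and
\[
|D\bar v|(\R^d)=\sum_{K\in\mathcal T}\int_K|\nabla v|\,\dx+\sum_{\sigma\in\mathcal E}\int_\sigma\bigl|[v]_\sigma\bigr|\,\dS,
\]
where $[v]_\sigma$ is the jump of $\bar v$ across $\sigma$ (with exterior trace $0$ on $\sigma\in\mathcal E_{\rm ext}$); by the very definition of $V_h(\Omega)$ one has $\int_\sigma[v]_\sigma\,\dS=0$ for every $\sigma\in\mathcal E$, internal or external. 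For $1\le p<d$ I would set $\beta=\tfrac{p(d-1)}{d-p}\ge1$ and $p'=\tfrac p{p-1}$ (so $\beta=1$, $w=v$ when $p=1$), chosen so that $\beta\tfrac d{d-1}=p^*$ and $(\beta-1)p'=p^*$, and apply the $BV$–Sobolev inequality $\|u\|_{L^{d/(d-1)}(\R^d)}\le C_d|Du|(\R^d)$ to $w=|v|^\beta$ extended by $0$ (Lipschitz on each $K$): this gives $\|v\|_{L^{p^*}(\Omega)}^{\beta}=\|w\|_{L^{d/(d-1)}(\Omega)}\le C_d\bigl(\sum_K\int_K|\nabla w|+\sum_\sigma\int_\sigma|[w]_\sigma|\,\dS\bigr)$. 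The element term is handled by Hölder, $\int_K|\nabla w|\le\beta\|\nabla v\|_{L^p(K)}\|v\|_{L^{p^*}(K)}^{\beta-1}$, followed by the discrete Hölder inequality and $(\beta-1)p'=p^*$, bounding it by $c\,|v|_{V_h^p(\Omega)}\|v\|_{L^{p^*}(\Omega)}^{\beta-1}$.

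The face term is the crux. On a face $\sigma=K|L$, $[v]_\sigma$ is affine with zero mean, so a Poincaré estimate on the $(d-1)$-simplex $\sigma$ gives $\|v_K-v_L\|_{L^\infty(\sigma)}\le c\,h\,(\|\nabla v\|_{L^\infty(K)}+\|\nabla v\|_{L^\infty(L)})$ (and $\|v_K\|_{L^\infty(\sigma)}\le c\,h\|\nabla v\|_{L^\infty(K)}$ on external faces, using $\int_\sigma v_K\,\dS=0$). Using the elementary inequality $\bigl||a|^\beta-|b|^\beta\bigr|\le\beta(|a|^{\beta-1}+|b|^{\beta-1})|a-b|$, the bound $|\sigma|h\le c\min(|K|,|L|)$ (a consequence of shape–regularity, under which neighbouring simplices have comparable diameters and volumes, so $|K|\approx|L|$), and the inverse inequalities on simplices $\|\nabla v\|_{L^\infty(K)}\le c|K|^{-1/p}\|\nabla v\|_{L^p(K)}$ and $\|v\|_{L^\infty(K)}\le c|K|^{-1/p^*}\|v\|_{L^{p^*}(K)}$ (the latter needed only when $\beta>1$), one checks that all powers of $|K|$ cancel — because $1-\tfrac1p-\tfrac{\beta-1}{p^*}=0$ — and obtains $\int_\sigma|[w]_\sigma|\,\dS\le c\sum_{M\in\{K,L\}}\|\nabla v\|_{L^p(M)}\|v\|_{L^{p^*}(M)}^{\beta-1}$ (plus comparable mixed $K$–$L$ terms). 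Summing over faces (each simplex lying in a bounded number of them) and applying the discrete Hölder inequality once more bounds the face term by $c\,|v|_{V_h^p(\Omega)}\|v\|_{L^{p^*}(\Omega)}^{\beta-1}$ as well; since $v$ is a polynomial on a bounded set, $\|v\|_{L^{p^*}(\Omega)}<\infty$, and dividing by $\|v\|_{L^{p^*}(\Omega)}^{\beta-1}$ yields (\ref{sob1}).

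For $p\ge d$ (so $d\ge2$, which covers the cases $d=2,3$ of interest) and any $q\in[1,\infty)$, I would pick $\tilde p\in[1,d)$ with $\tilde p<p$ and $\tilde p^*=\tfrac{d\tilde p}{d-\tilde p}\ge q$ — possible since $\tilde p^*\to\infty$ as $\tilde p\to d^-$ — and combine Hölder on the bounded set $\Omega$ with Hölder on each simplex and part (1) applied with exponent $\tilde p$: $\|v\|_{L^q(\Omega)}\le|\Omega|^{1/q-1/\tilde p^*}\|v\|_{L^{\tilde p^*}(\Omega)}\le c\,|v|_{V_h^{\tilde p}(\Omega)}\le c(q,\theta_0)\,|v|_{V_h^p(\Omega)}$, which is (\ref{sob3}). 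The main obstacle throughout is the face term in the power–function step: one must track the $h$– (equivalently $|K|$–) scaling of the surface contributions of the Crouzeix–Raviart jumps precisely enough that they are absorbed exactly into $|v|_{V_h^p(\Omega)}\|v\|_{L^{p^*}(\Omega)}^{\beta-1}$, which is what makes both the zero–mean property of the jumps and the inverse estimates on the simplices indispensable; everything else is routine scaling on a reference tetrahedron.
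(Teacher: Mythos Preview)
Your argument is correct. The case $p\ge d$ (Step 4 in the paper) and the base case $p=1$ are handled essentially as in the paper: the zero-mean jump condition on each face, combined with the affine nature of the jump, gives the $L^\infty$ bound $\|[v]_\sigma\|_{L^\infty(\sigma)}\le c\,h_\sigma(|\nabla v_K|+|\nabla v_L|)$, which is exactly the paper's estimate (there phrased via the centroid value $v(x_\sigma)$, equal for both traces since affine functions attain their face-mean at the centroid).

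The genuine difference is in the boost from $p=1$ to $1<p<d$. You work directly with $w=|v|^\beta$, which is piecewise Lipschitz but \emph{not} in $V_h(\Omega)$, and control the face integrals of $[w]_\sigma$ via the pointwise inequality $||a|^\beta-|b|^\beta|\le\beta(|a|^{\beta-1}+|b|^{\beta-1})|a-b|$ together with inverse inequalities on each simplex; the scaling check $1-\tfrac1p-\tfrac{\beta-1}{p^*}=0$ is correct, and shape-regularity indeed forces $h_K\approx h_\sigma\approx h_L$, hence $|K|\approx|L|$, so the mixed $K$--$L$ terms cause no trouble. The paper instead replaces $|u|^\alpha$ by its Crouzeix--Raviart interpolant $v\in V_h(\Omega)$ (i.e.\ $v_\sigma=|u_\sigma|^\alpha$), applies the already-established $p=1$ inequality to $v$, and then compares $\|u\|_{L^{p^*}}^\alpha$ with $\|v\|_{L^{1^*}}$ by a finite-dimensional norm-equivalence argument on the reference element (their Step~1). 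Their route avoids the inverse inequalities and the power-of-$|K|$ bookkeeping, at the cost of an extra comparison lemma; yours is more direct and self-contained, but requires tracking the scaling carefully. Both lead to the same constant dependence $c=c(\theta_0,p,d)$.
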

\begin{proof}

\textbf{Step 1}
Let $1 \le r \le \alpha < \infty$. Let $ u \in V_h$. We call $v$ the element of $V_h$ such that $ v_\sigma = |u_\sigma|^\alpha $. Then there exists $C$ only depending on $d,r,\alpha$ such that
\begin{equation}\label{leman3}
 ||u||_{L^r(\Omega)}^\alpha \le || u ||_{L^{\frac{r}{\alpha}}(\Omega)}.
\end{equation}

To prove ($\ref{leman3}$) we remark that, using a change of variable, it is enough to show to prove the existence of $C$  for only the unit symplex $ \hat{K}$. Let $u \in \mathbb{P}_1(\hat{K}) $ and we call $v$ the element of $\mathbb{P}_1(\hat{K}) $ such that $ v_\sigma = |u_\sigma|^\alpha $. Let $T(u) = ||u||_{L^r(\hat{K})} $ and $ S(u) = || u ||_{L^{\frac{r}{\alpha}}(\hat{K})}
^{\frac{1}{\alpha}}$. These  two functions are continuous, homogeneous  of degree $1$ and non zero if $ u \ne 0$.  Since $\mathbb{P}_1(\hat{K}) $  is a finite dimensional space, we can choose a norm on $\mathbb{P}_1(\hat{K}) $ and take $C = (\frac{M}{m})^\alpha $ where $ M = \max \{ T(u), ||u||_{\mathbb{P}_1(\hat{K})}=1 \} $ and  $ m = \min \{ T(u), ||u||_{\mathbb{P}_1(\hat{K})}=1 \} $.
\vspace{2em}

\textbf{ Step 2: Proof for $p=1.$} \\
We set $u=0$ outside $\Omega$. For $\sigma \in {\cal{E}_{\intt}}, \sigma=K|L$, we set $ |[u(x)]|=|u_K(x)-u_L(x)|$ for $x \in \sigma$. For $ \sigma \in {\cal{E}}_{\extt}\cap {\cal{E}}(K) $, we set $|[u(x)]| =|u_K(x)|$ for $x \in \sigma$. We first remark that there exists $C_{1,1}$ and $C_{1,2}$ only depending on $d$ such that
$$ ||u||_{L^{\frac{d}{d-1}}(\Omega)} \le C_{1,1}||u||_{BV(\R^d)} \le C_{1,2} || \nabla_h u||_{L^1(\Omega)} + C_{1,2} \sum_{\sigma \in {\cal{E}}} \int_\sigma |[u]| {\rm d}S. $$
We now prove that there exits $C_{1,3}$ only depending on $d$ and $\theta_0$ such that
$$ \sum_{\sigma \in {\cal{E}}} \int_\sigma |[u]| {\rm d}S \le C_{1,3} || \nabla_h u||_{L^1(\Omega)}. $$
Let $K \in \T $ and $ \sigma \in {\cal{E}}(K) $. Let $x_\sigma $ be the center of mass of $\sigma$. We have, with$ u_K =u $ in $K$,
$$ u_K(x) -u(x_\sigma) = \int_0^1 \nabla u_K \cdot (x-x_\sigma) \dx.$$
Then if $ \sigma=K|L$ we have
$$ |u_K(x)-u_L(x)| \le h_\sigma \Big( |\nabla u_K| + |\nabla u_L| \Big). $$
Integrating this inequality on $\sigma$ gives
$$ \int_\sigma |[u]| {\rm d}S \le |\sigma|h_\sigma  \Big( |\nabla u_K| + |\nabla u_L| \Big) \le \frac{2}{\theta_0^d} \Big( || \nabla u||_{L^1(K)} +|| \nabla u||_{L^1(L)} \Big).$$
Similarly for $ \sigma \in \E_{\extt}\cap \E(K) $ we have
$$ \int_\sigma |[u]| {\rm d}S \le \frac{2}{\theta_0^d}  || \nabla u||_{L^1(K)} $$
Then there exists $C_{1,3}=C(d,\theta_0) $ such that
$$ \stie \int_\sigma |[u]| {\rm d}S \le C_{1,3} || \nabla_h u ||_{L^1(\Omega)}. $$
and then,
$$ ||u||_{L^{1^*}(\Omega)} \le c(d,\theta_0) || \nabla_h u||_{L^1(\Omega)}. $$
\textbf{ Step 3: Proof for $1<p<d.$} \\
Let $1<p<d $ and $ p^* = \frac{pd}{d-p} $ and let $ u \in V_h$. We set $ u=0$ outside $\Omega$. Let $ \alpha = \frac{p(d-1)}{d-p} $, so that $\alpha > 1$ and $\alpha 1^*=p^* $. We call $v$ the element of $V_h$ such that $ v_\sigma = |u_\sigma|^\alpha $ for $ \sigma \in \E $.  One has $v \ne |u|^\alpha $ but there exits $C_{2,1} $ only depending on $d$ and $p$ (see lemma $\ref{leman3}$) such that
$$ || u||_{L^{p^*}(\Omega)}^\alpha \le C_{2,1} ||v||_{L^{1^*}(\Omega)} \le c(d,p,\theta_0) ||\nabla_h v||_{L^{1}(\Omega)}.  $$
Moreover using a scalling argument  we obtain
$$ || \nabla_h v ||_{L^1(K)} \le c(d,p,\theta_0) \stiek |u_\sigma|^{\alpha-1}| \nabla u_K | |K|. $$
Then, using H\"older Inequality, we have, with $q=\frac{p}{p-1}$ (so that $q( \alpha-1) =p^*$),
$$  || \nabla_h v ||_{L^1(K)} \le c(d,p,\theta_0) || \nabla u ||_{L^p(K)} || u||_{L^{p^*}(K)}^{\frac{p^*}{q}}.$$
Summing on $ K \in \T$ we obtain
$$||u||_{L^{p^*}(\Omega)} \le C_2 || \nabla_h u||_{L^p(\Omega)}.$$
\textbf{Step 4: Proof for $p\ge d.$} \\
Let $ 1 \le q < \infty $. There exists $r=r(d,q)$ such that  $r <d$ and $r^* \ge q$. We have $$ ||u||_{L^{r^*}(\Omega)} \le c(r,d,q,\theta_0) || \nabla_h u ||_{L^r(\Omega)}. $$
Moreover $$ ||u||_{L^{q}(\Omega)} \le |\Omega|^{\frac{1}{q}-\frac{1}{r^*}} ||u||_{L^{r^*}(\Omega)} \le c(d,q,\theta_0) |\Omega|^{\frac{1}{q}-\frac{1}{r^*}} || \nabla_h u ||_{L^r(\Omega)} $$ and
$$ || \nabla_h u ||_{L^r(\Omega)} \le |\Omega|^{\frac{1}{r}-\frac{1}{p}} || \nabla_h u ||_{L^p(\Omega)}.$$
Finally
$$ ||u||_{L^q(\Omega)} \le c(\Omega,d,p,q,\theta_0) || \nabla_h u ||_{L^p(\Omega)}.$$
\end{proof}

A Combination of Lemma \ref{Lemma2+} with estimates (\ref{sob4-}), (\ref{sob4}) and  the H\"older inequality
yields the following corollary.

\begin{cor}[Estimates of the norms of mean values]\label{cor4}
We have under the assumptions of Lemma \ref{Lemma2+}:
\begin{description}
\item {\it (1) Poincar\'e type inequality involving mean values on tetrahedra}

There exists $c=c(\theta_0,p)$ such that for all $v\in V_h$,
\begin{equation}\label{norms2}
\|\hat v\|_{L^p(\Omega)}\equiv\Big(\sum_{K\in {\cal T}}|K||v_K|^p\Big)^{1/p}\le c( ||v||_{L^p(\Omega)}+h|v|_{ V_h^p(\Omega)}).
\end{equation}
\item {\it (2) Sobolev type inequality involving mean values on tetrahedrons}

Let $1\le p <d$, there exists $c=c(\theta_0,p)$ such that for all $v\in V_h$,
\begin{equation}\label{norms3}
\|\hat v\|_{L^{p^*}(\Omega)}\equiv\Big(\sum_{K\in {\cal T}}|K||v_K|^{p^*}\Big)^{1/{p^*}}\le c( ||v||_{L^{p^*}(\Omega)}+|v|_{V_h^p}).
\end{equation}
\end{description}
\end{cor}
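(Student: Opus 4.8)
The plan is to obtain both bounds by comparing $\hat v$ with $v$ and invoking the Poincar\'e-- and Sobolev--type estimates on $V_h(\Omega)$ already proved in the Appendix. The starting observation is that, by the definition (\ref{vhat}) of $\hat v$, the discrete quantity $\big(\sum_{K\in {\cal T}}|K||v_K|^p\big)^{1/p}$ is nothing but the $L^p(\Omega)$ norm of the piecewise constant function $\hat v$, so that (\ref{norms2}) and (\ref{norms3}) are statements about $\|\hat v\|_{L^p(\Omega)}$ and $\|\hat v\|_{L^{p^*}(\Omega)}$ respectively.

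For (\ref{norms2}), I would write $\hat v = v-(v-\hat v)$ and apply the triangle inequality in $L^p(\Omega)$,
\[
\|\hat v\|_{L^p(\Omega)}\le \|v\|_{L^p(\Omega)}+\|v-\hat v\|_{L^p(\Omega)},
\]
estimating the last term by $c\,h\,|v|_{V_h^p(\Omega)}$ with the Poincar\'e--type inequality (\ref{sob4-}) of Corollary \ref{cor2}. For (\ref{norms3}), under the restriction $1\le p<d$ that makes $p^*$ meaningful, I would run exactly the same argument with $p^*$ in place of $p$, this time bounding $\|v-\hat v\|_{L^{p^*}(\Omega)}$ by $c\,|v|_{V_h^p(\Omega)}$ through the Sobolev--type inequality (\ref{sob4}); the term $\|v\|_{L^{p^*}(\Omega)}$ on the right is finite for $v\in V_h(\Omega)$ by the Sobolev inequality (\ref{sob1}) of Lemma \ref{Lemma2+}. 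The structural reason the factor $h$ is present in (\ref{norms2}) but absent from (\ref{norms3}) is precisely that the Poincar\'e constant in (\ref{sob4-}) carries an $h$ while the Sobolev constant in (\ref{sob4}) does not.

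Alternatively, and even more directly, H\"older's inequality applied on each cell gives $|v_K|=\big||K|^{-1}\int_K v\dx\big|\le\big(|K|^{-1}\int_K|v|^q\dx\big)^{1/q}$ for any $q\ge1$, hence $\sum_{K\in{\cal T}}|K||v_K|^q\le\|v\|_{L^q(\Omega)}^q$, i.e. $\|\hat v\|_{L^q(\Omega)}\le\|v\|_{L^q(\Omega)}$; taking $q=p$, respectively $q=p^*$, this a fortiori yields (\ref{norms2}) and (\ref{norms3}) (in the sharper form without the extra terms).

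Since every step here is a one-line consequence of an estimate already available, I do not expect a genuine obstacle. The only points requiring a little care are the identification of the discrete norm $\big(\sum_K|K||v_K|^p\big)^{1/p}$ with $\|\hat v\|_{L^p(\Omega)}$, and checking that the dependence of the constants on $\theta_0$ and $p$ remains consistent with the cited inequalities.
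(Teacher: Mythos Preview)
Your proposal is correct and matches the paper's own justification: the paper simply states that the corollary follows from ``a combination of Lemma \ref{Lemma2+} with estimates (\ref{sob4-}), (\ref{sob4}) and the H\"older inequality,'' which is precisely your first argument (triangle inequality plus the Poincar\'e/Sobolev bounds on $V_h$). Your alternative via Jensen/H\"older on each cell, yielding the sharper $\|\hat v\|_{L^q(\Omega)}\le\|v\|_{L^q(\Omega)}$, is a valid and cleaner observation that the paper does not spell out.
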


Note that the Last but not least, we recall a result on equivalence of norms in the space $V_h(\Omega)$ which is a consequence of a discrete Poincar\'e inequality on the broken Sobolev space $V_h$  \cite[proposition 4.13]{tem-84-nav}.

\begin{lm}[Discrete and continuous norms in $V_h$]\label{Lemma4}
Let $1\le p<\infty$. Let $ \theta_0 > 0$ and $ {\cal{T}} $ be a  triangulation of $ \Omega $ such that $ \theta \ge \theta_0 $ where $ \theta $ is defined in $ (\ref{reg}) $. Then
the norms
\begin{equation}\label{norms1}
\Big(\sti |\sigma| h |v_\sigma|^p\Big)^{1/p}\quad\mbox{and}\quad ||v||^p_{L^p(\Omega)}
\end{equation}
are equivalent on $V_h(\Omega)$ uniformly with respect to $h>0$.
\end{lm}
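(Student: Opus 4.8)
The statement is a purely local, element-wise scaling fact, so the plan is to establish it first on a fixed reference simplex, then transport it to every $K\in\T$ by an affine change of variables, and finally sum over the mesh.

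\textbf{Step 1: the reference element.} One fixes a reference simplex $\hat K\subset\R^d$ with faces $\hat\sigma_0,\dots,\hat\sigma_d$ and compares, on the $(d+1)$-dimensional space $\mathbb P_1(\hat K)$, the two quantities $\hat v\mapsto\|\hat v\|_{L^p(\hat K)}$ and $\hat v\mapsto\big(\sum_{i=0}^{d}|\hat v_{\hat\sigma_i}|^p\big)^{1/p}$, where $\hat v_{\hat\sigma}=\frac1{|\hat\sigma|}\int_{\hat\sigma}\hat v\,\dS$. The first is obviously a norm. The second is a norm as well: the mean value of an affine function over a face equals its value at the face centroid, and the $d+1$ face centroids of a simplex are affinely independent (the map sending each vertex to the centroid of the opposite face is an affine bijection of $\R^d$, hence carries the affinely independent set of vertices to an affinely independent set), so the linear map $\hat v\mapsto(\hat v_{\hat\sigma_0},\dots,\hat v_{\hat\sigma_d})$ is injective on $\mathbb P_1(\hat K)$ — this is the Crouzeix–Raviart unisolvence, already witnessed by the dual basis $\{\hat\phi_{\hat\sigma}\}$ used in the proof of Lemma \ref{Lemma1} (cf. \eqref{CRB}). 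Two norms on a finite-dimensional space are equivalent, so there are $c_1,c_2>0$ depending only on $d$ and $p$ with $c_1\|\hat v\|_{L^p(\hat K)}\le\big(\sum_{\hat\sigma\in\mathcal E(\hat K)}|\hat v_{\hat\sigma}|^p\big)^{1/p}\le c_2\|\hat v\|_{L^p(\hat K)}$ for all $\hat v\in\mathbb P_1(\hat K)$.

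\textbf{Step 2: scaling to $K$.} For $K\in\T$ let $F_K:\hat K\to K$ be the affine bijection and, for $v\in V_h(\Omega)$, set $\hat v=v|_K\circ F_K\in\mathbb P_1(\hat K)$; recall that on each $K$ the values $v_\sigma$, $\sigma\in\mathcal E(K)$, are single-valued thanks to the continuity constraint in the definition of $V_h(\Omega)$ and vanish for $\sigma\in{\cal E}_{\rm ext}$. Since $|\det DF_K|=|K|/|\hat K|$ is constant one has $\|v\|_{L^p(K)}^p=\tfrac{|K|}{|\hat K|}\|\hat v\|_{L^p(\hat K)}^p$, and since an affine change of variables preserves averages over faces one has $v_\sigma=\hat v_{F_K^{-1}(\sigma)}$. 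Substituting into Step 1 gives $c_1^{\,p}\tfrac{|\hat K|}{|K|}\|v\|_{L^p(K)}^p\le\sum_{\sigma\in\mathcal E(K)}|v_\sigma|^p\le c_2^{\,p}\tfrac{|\hat K|}{|K|}\|v\|_{L^p(K)}^p$.

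\textbf{Step 3: summation and mesh regularity.} Summing the last estimate over $K\in\T$, regrouping by faces and using $v_\sigma=0$ for $\sigma\in{\cal E}_{\rm ext}$, the weight attached to an internal face $\sigma=K|L$ is $\sum_{K':\,\sigma\subset\partial K'}|K'|=|K|+|L|$. Under the uniform regularity \eqref{reg1} (to which all proofs are restricted, as announced after Theorem \ref{Main}) there are constants depending only on $\theta_0$ such that $|K|$, $|L|$ and $|\sigma|\,h$ are comparable; hence $\|v\|_{L^p(\Omega)}^p=\sum_{K\in\T}\|v\|_{L^p(K)}^p$ is bounded above and below by constant multiples of $\sum_{\sigma\in{\cal E}_{\rm int}}|\sigma|\,h\,|v_\sigma|^p$, the constants depending only on $d$, $p$ and $\theta_0$; this is the asserted equivalence. (For a mesh satisfying only \eqref{reg1-} the same argument works verbatim with $h$ replaced by $h_\sigma$.) The only step requiring a genuine argument is the injectivity in Step 1, i.e. the Crouzeix–Raviart unisolvence, which is classical and in the present text is implicit in the existence of the dual basis $\hat\phi_{\hat\sigma}$; the remaining steps are routine scaling and face–element bookkeeping, so I do not expect a real obstacle.
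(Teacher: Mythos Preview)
Your argument is correct and is the standard reference-element-plus-scaling proof of this kind of norm equivalence. Note, however, that the paper does not actually supply its own proof of this lemma: it merely records the statement in the appendix and points to the literature (the sentence preceding the lemma attributes it to a discrete Poincar\'e inequality in Temam's book). So there is no ``paper's proof'' to compare against; what you have written fills the gap with a self-contained argument.

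One minor presentational point: in Step~3 you sum the Step~2 inequality directly, but the inequality you wrote there carries the factor $|\hat K|/|K|$ on the $\|v\|_{L^p(K)}^p$ side. To get the face weight $|K|+|L|$ you are implicitly multiplying both sides by $|K|$ first (so that $\|v\|_{L^p(K)}^p$ sits alone and $\sum_{\sigma\in\mathcal E(K)}|v_\sigma|^p$ picks up the factor $|K|$) before summing over $K$ and regrouping by faces. Making that multiplication explicit would make the bookkeeping transparent; the conclusion via \eqref{reg1} is then exactly as you say.
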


The last lemma in this overview deals with the estimates of jumps over faces.
The reader can consult   \cite[Lemma 3.32]{ern-04-the} or \cite[Lemma 2.2]{GHL2009iso} for its proof.
\begin{lm}[Jumps over faces in the Crouzeix-Raviart space]\label{Lemma6}
Let $ \theta_0 > 0$ and $ {\cal{T}} $ be a  triangulation of $ \Omega $ such that $ \theta \ge \theta_0 $ where $ \theta $ is defined in $ (\ref{reg}) $. Then there exists  $ c=c(\theta_0)>0 $ such that for all $ v \in V_h(\Omega)$,
\begin{equation}\label{tbound}
\sum_{\sigma \in {\cal{E}}} \frac{1}{h} \int_\sigma [v]_{\sigma,\bn_\sigma}^2 \dS \le c |v|_{V_h^2(\Omega)}^2,
\end{equation}
where  $[v]_{\sigma,\bn_\sigma}$ is a jump of $v$  with respect to a normal $\bn_\sigma$ to the face $\sigma$,
\[
\forall x\in\sigma=K|L\in{\cal E}_{\rm int},\quad [v]_{\sigma,\bn_\sigma}(x)=\left\{
\begin{array}{c}
 v|_K(x)-v|_L(x)\;\mbox{if $\bn_\sigma=\bn_{\sigma,K}$}\\
v|_L(x)-v|_K(x)\;\mbox{if $\bn_\sigma=\bn_{\sigma,L}$}
\end{array}\right.
\]
and
\[
\forall x\in\sigma\in{\cal E}_{\rm ext},\quad [v]_{\sigma,\bn_\sigma}(x)=v(x),\;\mbox{with $\bn_\sigma$ an exterior normal to $\partial\Omega$}.
\]
\end{lm}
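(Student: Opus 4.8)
The plan is to prove \eqref{tbound} by the standard device for non‑conforming spaces: the jump across any face of a Crouzeix--Raviart function has vanishing mean on that face, which reduces the estimate to a scaled trace inequality combined with a Poincar\'e inequality on each simplex. I regard this as an essentially routine lemma (indeed it is classical, see \cite{ern-04-the,GHL2009iso}), so the main work is the bookkeeping of the mesh parameters and the dependence of constants on $\theta_0$.

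First I would record the key structural fact. For $v\in V_h(\Omega)$ and any face $\sigma\in{\cal E}$, the face mean $v_\sigma$ defined by \eqref{vtilde} is single‑valued: for $\sigma=K|L\in{\cal E}_{\rm int}$ this is exactly the continuity requirement $\int_\sigma v_{|K}\dS=\int_\sigma v_{|L}\dS$ encoded in the definition of $V_h(\Omega)$, while for $\sigma\in{\cal E}_{\rm ext}$ one has $v_\sigma=0$. Hence on every face the jump $[v]_{\sigma,\bn_\sigma}$ has zero mean, and we may write
$$[v]_{\sigma,\bn_\sigma}=\pm\big((v_{|K}-v_\sigma)-(v_{|L}-v_\sigma)\big),\qquad \sigma=K|L\in{\cal E}_{\rm int},$$
and $[v]_{\sigma,\bn_\sigma}=v_{|K}-v_\sigma$ for $\sigma\in{\cal E}_{\rm ext}\cap{\cal E}(K)$.

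Next I would invoke a scaled trace/Poincar\'e bound on a single simplex, obtained by an affine change of variables to the reference tetrahedron and using the regularity $\theta\ge\theta_0$: there is $c=c(\theta_0)>0$ such that for all $K\in{\cal T}$, $\sigma\in{\cal E}(K)$ and $w\in W^{1,2}(K)$ with $\int_\sigma w\dS=0$,
$$\|w\|_{L^2(\sigma)}^2\le c\,h_K\,\|\nabla w\|_{L^2(K;\Rm^d)}^2;$$
this is the combination of the trace inequality $\|w\|_{L^2(\sigma)}^2\le c(h_K^{-1}\|w\|_{L^2(K)}^2+h_K\|\nabla w\|_{L^2(K;\Rm^d)}^2)$ with the (local form of the) Poincar\'e inequality \eqref{L2-2}, namely $\|v_{|K}-v_\sigma\|_{L^2(K)}\le c\,h_K\,\|\nabla v\|_{L^2(K;\Rm^d)}$. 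Applying it with $w=v_{|K}-v_\sigma$, using $(a+b)^2\le2a^2+2b^2$ for the two contributions of an interior face, and the trivial bound $h_K\le h$, I obtain for every $\sigma=K|L$
$$\frac1h\int_\sigma[v]_{\sigma,\bn_\sigma}^2\dS\le c(\theta_0)\,\big(\|\nabla v\|_{L^2(K;\Rm^d)}^2+\|\nabla v\|_{L^2(L;\Rm^d)}^2\big),$$
and the analogous one‑sided estimate for $\sigma\in{\cal E}_{\rm ext}$. Finally I would sum over all $\sigma\in{\cal E}$: each simplex has exactly $d+1$ faces, so any $K\in{\cal T}$ is counted at most $d+1$ times, giving
$$\sum_{\sigma\in{\cal E}}\frac1h\int_\sigma[v]_{\sigma,\bn_\sigma}^2\dS\le c(\theta_0)(d+1)\sum_{K\in{\cal T}}\|\nabla v\|_{L^2(K;\Rm^d)}^2=c(\theta_0)\,|v|_{V_h^2(\Omega)}^2,$$
which is \eqref{tbound}. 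No genuine obstacle is expected; the only delicate points are ensuring the trace inequality constant depends on the mesh only through $\theta_0$, handling the (harmless) interplay of $h$, $h_\sigma$, $h_K$ via $h_K\le h$, and correctly using the vanishing mean of the jump, which is precisely the non‑conformity condition defining $V_h(\Omega)$.
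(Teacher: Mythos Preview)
Your proof is correct and follows the standard route: exploit the vanishing face mean of the jump built into the definition of $V_h(\Omega)$, apply a scaled trace inequality combined with the local Poincar\'e estimate \eqref{L2-2}, and sum over faces. The paper itself does not give a proof of this lemma; it simply refers the reader to \cite[Lemma~3.32]{ern-04-the} and \cite[Lemma~2.2]{GHL2009iso}, and your argument is precisely the classical one found in those references.
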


\end{document}